\DeclareMathOperator{\Char}{char}
\DeclareMathOperator{\Hom}{Hom}
\DeclareMathOperator{\Img}{Im}
\DeclareMathOperator{\Infil}{Infil}
\DeclareMathOperator{\Sh}{Sh}
\DeclareMathOperator{\trg}{trg}
\DeclareFontFamily{U}{wncy}{}
\DeclareFontShape{U}{wncy}{m}{n}{<->wncyr10}{}
\DeclareSymbolFont{mcy}{U}{wncy}{m}{n}
\DeclareMathSymbol{\Sha}{\mathord}{mcy}{"58}
\DeclareMathSymbol{\sha}{\mathord}{mcy}{"78}
\begin{document}

\newtheorem{thm}{Theorem}[section]
\newtheorem{cor}[thm]{Corollary}
\newtheorem{lem}[thm]{Lemma}
\newtheorem{prop}[thm]{Proposition}
\newtheorem{defin}[thm]{Definition}
\newtheorem{exam}[thm]{Example}
\newtheorem{examples}[thm]{Examples}
\newtheorem{rem}[thm]{Remark}
\newtheorem{case}{\sl Case}
\newtheorem{claim}{Claim}
\newtheorem{prt}{Part}
\newtheorem*{mainthm}{Main Theorem}
\newtheorem*{thmA}{Theorem A}
\newtheorem*{thmB}{Theorem B}
\newtheorem*{thmC}{Theorem C}
\newtheorem*{thmD}{Theorem D}
\newtheorem{question}[thm]{Question}
\newtheorem*{notation}{Notation}
\swapnumbers
\newtheorem{rems}[thm]{Remarks}
\newtheorem*{acknowledgment}{Acknowledgment}
\newtheorem{questions}[thm]{Questions}
\numberwithin{equation}{section}

\newcommand{\ab}{\mathrm{ab}}
\newcommand{\Ad}{\mathrm{ad}}
\newcommand{\Bock}{\mathrm{Bock}}
\newcommand{\dec}{\mathrm{dec}}
\newcommand{\diam}{\mathrm{diam}}
\newcommand{\dirlim}{\varinjlim}
\newcommand{\discup}{\ \ensuremath{\mathaccent\cdot\cup}}
\newcommand{\divis}{\mathrm{div}}
\newcommand{\et}{\mathrm{et}}
\newcommand{\gr}{\mathrm{gr}}
\newcommand{\nek}{,\ldots,}
\newcommand{\ind}{\hbox{ind}}
\newcommand{\indec}{{\mathrm{indec}}}
\newcommand{\Ind}{\mathrm{Ind}}
\newcommand{\inv}{^{-1}}
\newcommand{\isom}{\cong}
\newcommand{\Massey}{\mathrm{Massey}}
\newcommand{\ndiv}{\hbox{$\,\not|\,$}}
\newcommand{\nil}{\mathrm{nil}}
\newcommand{\pr}{\mathrm{pr}}
\newcommand{\sep}{\mathrm{sep}}
\newcommand{\sh}{\mathrm{sh}}
\newcommand{\tagg}{^{''}}
\newcommand{\tensor}{\otimes}
\newcommand{\alp}{\alpha}
\newcommand{\gam}{\gamma}
\newcommand{\Gam}{\Gamma}
\newcommand{\del}{\delta}
\newcommand{\Del}{\Delta}
\newcommand{\eps}{\epsilon}
\newcommand{\lam}{\lambda}
\newcommand{\Lam}{\Lambda}
\newcommand{\sig}{\sigma}
\newcommand{\Sig}{\Sigma}
\newcommand{\bfA}{\mathbf{A}}
\newcommand{\bfB}{\mathbf{B}}
\newcommand{\bfC}{\mathbf{C}}
\newcommand{\bfF}{\mathbf{F}}
\newcommand{\bfP}{\mathbf{P}}
\newcommand{\bfQ}{\mathbf{Q}}
\newcommand{\bfR}{\mathbf{R}}
\newcommand{\bfS}{\mathbf{S}}
\newcommand{\bfT}{\mathbf{T}}
\newcommand{\bfZ}{\mathbf{Z}}
\newcommand{\dbA}{\mathbb{A}}
\newcommand{\dbC}{\mathbb{C}}
\newcommand{\dbF}{\mathbb{F}}
\newcommand{\dbK}{\mathbb{K}}
\newcommand{\dbN}{\mathbb{N}}
\newcommand{\dbP}{\mathbb{P}}
\newcommand{\dbQ}{\mathbb{Q}}
\newcommand{\dbR}{\mathbb{R}}
\newcommand{\dbU}{\mathbb{U}}
\newcommand{\dbZ}{\mathbb{Z}}
\newcommand{\grf}{\mathfrak{f}}
\newcommand{\gra}{\mathfrak{a}}
\newcommand{\grA}{\mathfrak{A}}
\newcommand{\grB}{\mathfrak{B}}
\newcommand{\grd}{\mathfrak{d}}
\newcommand{\grh}{\mathfrak{h}}
\newcommand{\grH}{\mathfrak{H}}
\newcommand{\grI}{\mathfrak{I}}
\newcommand{\grL}{\mathfrak{L}}
\newcommand{\grm}{\mathfrak{m}}
\newcommand{\grp}{\mathfrak{p}}
\newcommand{\grq}{\mathfrak{q}}
\newcommand{\grr}{\mathfrak{r}}
\newcommand{\grR}{\mathfrak{R}}
\newcommand{\calA}{\mathcal{A}}
\newcommand{\calB}{\mathcal{B}}
\newcommand{\calC}{\mathcal{C}}
\newcommand{\calE}{\mathcal{E}}
\newcommand{\calG}{\mathcal{G}}
\newcommand{\calH}{\mathcal{H}}
\newcommand{\calK}{\mathcal{K}}
\newcommand{\calL}{\mathcal{L}}
\newcommand{\calM}{\mathcal{M}}
\newcommand{\calR}{\mathcal{R}}
\newcommand{\calU}{\mathcal{U}}
\newcommand{\calW}{\mathcal{W}}
\newcommand{\calV}{\mathcal{V}}

\title[The $p$-Zassenhaus filtration and shuffle relations]
{The $p$-Zassenhaus Filtration of a Free Profinite Group and Shuffle Relations}

\author{Ido Efrat}
\address{Department of Mathematics\\
Ben-Gurion University of the Negev\\
P.O.\ Box 653, Be'er-Sheva 8410501\\
Israel} \email{efrat@bgu.ac.il}

\keywords{$p$-Zassenhaus filtration, modular dimension subgroups, Galois cohomology, shuffle algebra, shuffle relations, unipotent upper-triangular representations, Massey products}
\subjclass[2010]{Primary 12G05; Secondary 68R15, 12F10, 12E30}

\maketitle
\begin{abstract}
For a prime number $p$ and a free profinite group $S$ on the basis $X$, let $S_{(n,p)}$, $n=1,2\nek$ be the  $p$-Zassenhaus filtration of $S$.
For $p>n$, we give a word-combinatorial description of the cohomology group $H^2(S/S_{(n,p)},\dbZ/p)$ in terms of the shuffle algebra on $X$.
We give a natural linear basis for this cohomology group, which is constructed by means of unitriangular representations arising from Lyndon words.
\end{abstract}

\section{Introduction}
The purpose of this paper is to study the $p$-Zassenhaus filtration of a free profinite group $S$ and its cohomology by means of the combinatorics of words.
Here $p$ is a fixed prime number, and we recall that the $p$-Zassenhaus filtration of a profinite group $G$ is given by
$G_{(n,p)}=\prod_{ip^j\geq n}(G^{(i)})^{p^j}$, $n=1,2\nek$  i.e., $G_{(n,p)}$ is generated as a profinite group by all $p^j$-powers of elements of the $i$-th term of the (profinite) lower central filtration $G^{(i)}$ of $G$ for $ip^j\geq n$.

This filtration was introduced by Zassenhaus \cite{Zassenhaus39} for discrete groups (under the name \textsl{dimension subgroups modulo $p$})  as a tool to study free Lie algebras in characteristic $p$.
It proved itself to be a powerful tool in a variety of group-theoretic and arithmetic problems:
the Golod--Shafarevich solution to the class field tower problem (\cite{Koch69}, \cite{Koch02}*{\S7.7}, \cite{Zelmanov00}, \cite{Ershov12}), the structure of finitely generated pro-$p$ groups of finite rank \cite{DixonDuSautoyMannSegal99}*{Ch.\ 11},  mild groups \cite{Labute06} and one-relator pro-$p$ groups \cite{Gartner11}*{\S2.4}, multiple residue symbols and their knot theory analogs (\cite{Morishita04}, \cite{Morishita12}*{Ch.\ 8},  \cite{Vogel05}), and more.

In the Galois theory context, where $G=G_F$ is the absolute Galois group of a field $F$ containing a root of unity of order $p$, it was shown in \cite{EfratMinac17} that the quotient $G/G_{(3,p)}$ determines the full cohomology ring $H^*(G)=\bigoplus_{i\geq0}H^i(G)$ with the cup product.
Here and in the sequel we abbreviate $H^i(G)=H^i(G,\dbZ/p)$ for the profinite cohomology group of $G$ with its trivial action on $\dbZ/p$.
Moreover, $G/G_{(3,p)}$ is the smallest Galois group of $F$ with this property (see also \cite{CheboluEfratMinac12}).

In the present paper we focus on the cohomology group $H^2(G/G_{(n,p)})$ for a profinite group $G$ and $n\geq2$.
Its importance is that it controls the relator structure in the pro-$p$ group $G/G_{(n,p)}$, whereas its generators are captured by the group $H^1(G/G_{(n,p)})$, which is well understood  \cite{NeukirchSchmidtWingberg}*{\S3.9}.

Our main result gives, for a free profinite group $S$ on a basis $X$, an explicit description of  $H^2(S/S_{(n,p)})$ in terms of the \textsl{combinatorics of words}.
Namely, we consider $X$ as an alphabet with a fixed total order, and let $X^*$ be the monoid of words in $X$.
For every $n\geq0$ let $\dbZ\langle X\rangle_n$ be the free $\dbZ$-module generated by all words in $X^*$ of length $n$.
Let $\Sh(X)_{\indec,n}$ be its quotient by the submodule generated by all \textsl{shuffle products}  $u\sha v$, where $u,v$ are nonempty words in $X^*$ with $|u|+|v|=n$.
We recall for words $u=(x_1\cdots x_r)$ and $v=(x_{r+1}\cdots x_{r+s})$ in $X^*$ one defines
\[
u\sha v=\sum_\sig(x_{\sig\inv(1)}\cdots x_{\sig\inv(r+t)})\in\dbZ\langle X\rangle,
\]
where $\sig$ ranges over all permutations of $\{1,2\nek r+s\}$ such that $\sig(1)<\cdots<\sig(r)$ and $\sig(r+1)<\cdots< \sig(r+t)$.
Thus $\Sh(X)_{\indec,n}$ is the $n$-th homogenous component of the indecomposable quotient of the \textsl{shuffle algebra} $\Sh(X)$ in the sense of \cite{Efrat20}*{\S5} (see \S\ref{section on shuffle relations}).
We prove the following word-combinatorial description of $H^2(S/S_{(n,p)})$ for $p$ sufficiently large:

\begin{mainthm}
Suppose that $n<p$.
There is a canonical isomorphism of $\dbF_p$-linear spaces
\[
\label{main isomorphism}
\Bigl(\bigoplus_{x\in X}\dbZ/p\Bigr)\oplus\Bigl((\Sh(X)_{\indec,n}\tensor(\dbZ/p)\Bigr)\xrightarrow{\sim} H^2(S/S_{(n,p)}).
\]
\end{mainthm}
When $p\leq n$ we give a similar result, in the form of a canonical epimorphism.

More specifically, to any word $w$ in $X^*$ of length $1\leq |w|\leq n$ we associate a canonical cohomology element $\alp_{w,n}\in H^2(S/S_{(n,p)})$.
Then the isomorphism in the Main Theorem is induced by the map $w\mapsto \alp_{w,n}$, where $w$ is either a single-letter word, or a word of length $n$.
In these cases $\alp_{w,n}$ turns out to be a \textsl{Bockstein element}, or  an element of an \textsl{$n$-fold Massey product}, respectively (see Examples \ref{alpha for 1}--\ref{alpha for n} and the remarks below).
The construction of $\alp_{w,n}$ is based on a representation of $S/S_{(n,p)}$ in a group of \textsl{unitriangular} (i.e.,  unipotent upper-triangular) matrices, which we derive from the \textsl{Magnus map} -- see \S\ref{section on the fundamental matrix} and \S\ref{section on cohomology} for details.

A main ingredient of the proof, of independent importance, is the construction of a canonical $\dbF_p$-linear basis of $H^2(S/S_{(n,p)})$, which we call the \textsl{Lyndon basis}.
Recall that a nonempty word $w$ in $X^*$ is called a \textsl{Lyndon word} if it is smaller in the alphabetic order (induced by the fixed total order on $X$) than all its non-trivial right factors (i.e., suffixes).
The Lyndon basis then consists of all cohomology elements $\alp_{w,n}$, where $w$ is a Lyndon word of length $\lceil n/p^k\rceil$ for some $k\geq0$.
When $n\leq p$ the possible lengths are only $1$ and $n$, leading to the two direct summands in the left-hand side of the Main Theorem.

We further use Lyndon words to give a canonical  basis of the $\dbF_p$-linear space $S_{(n,p)}/(S_{(n,p)})^p[S,S_{(n,p)}]$, and prove a duality (in a unitriangular sense) between the Lyndon basis of $H^2(S/S_{(n,p)})$ and  this latter basis (Corollary \ref{unitriangular duality}).
In the smallest case $n=2$, this recovers classical duality results between Bockstein elements/cup products and $p$-powers/commutators, respectively, proved by Labute in his classical work on Demu\v skin groups (\cite{Labute66}*{Prop.\ 8}, \cite{SerreDemuskin}, \cite{NeukirchSchmidtWingberg}*{Ch.\ III, \S9}).
In the case $n=3$ it refines results by Vogel \cite{Vogel05}*{\S2}.

The paper builds upon our earlier work \cite{Efrat17}, supplemented by \cite{Efrat20}, where we proved analogous results for the \textsl{lower $p$-central filtration}, defined inductively by $G^{(1,p)}=G$ and $G^{(n,p)}=(G^{(n-1,p)})^p[G,G^{(n-1,p)}]$ for $n\geq2$.
In many respects, this filtration and the $p$-Zassenhaus filtration are the opposite extremes among the filtrations related to mod-$p$ cohomology.

While we follow the general philosophy of  \cite{Efrat17} and \cite{Efrat20}, their methods fail short to apply to the $p$-Zassenhaus filtration.
Therefore we modify these methods in several aspects:
Mainly, whereas in the lower $p$-central case one should considers word $w$ of arbitrary lengths, in the Zassenhaus filtration case we need to restrict to words of lengths $\lceil n/p^k\rceil$, $k\geq0$, as above.
These ``jumps" arise when we analyze  the filtration for the group $\dbU_i(\dbZ/p^j)$ of unitriangular $(i+1)\times(i+1)$-matrices over $\dbZ/p^j$.
They turn out to have crucial, and quite non-obvious, properties, which are in particular needed for handling the dual $S_{(n,p)}/(S_{(n,p)})^p[S,S_{(n,p)}]$ of $H^2(S/S_{(n,p)})$.
Here commutator identities due to Shalev \cite{Shalev90} also play a key role.
By contrast,  the corresponding quotient in the lower $p$-central case is $S^{(n,p)}/S^{(n+1,p)}$, which is considerably more tractable.
In addition,  the analysis for the lower $p$-central filtration in \cite{Efrat17} is based on (mixed) Lie algebra computations.
In the Zassenhaus filtration case we apply instead the theory of free \textsl{$p$-restricted} Lie algebra, following \cite{Lazard65} and \cite{Gartner11}.

The correspondence in the Main Theorem demonstrates deep conections between the $p$-Zassenhaus filtration and its cohomology and the $n$-fold Massey product  $H^1(G)^n\to H^2(G)$.
In fact, it was shown in \cite{Efrat14} that when $S$ is a free profinite group, $S_{(n,p)}/S_{(n+1,p)}$ is dual to the subgroup of $H^2(S/S_{(n,p)})$ generated by all such products.
Moreover, the latter subgroup is the kernel of the inflation map $H^2(S/S_{(n,p)})\to H^2(S/S_{(n+1,p)})$.
The size of $S_{(n,p)}/S_{(n+1,p)}$ was computed in  \cite{MinacRogelstadTan16}.
The behavior of Massey products for  absolute Galois groups $G=G_F$ has been the focus of extensive research in recent years, where the $p$-Zassenhaus filtration played an important role; see e.g.\ \cite{HopkinsWickelgren15}, \cite{MinacTan15},  \cite{EfratMatzri17}, \cite{MinacTan16}, \cite{GuillotMinacTopazWittenberg18},  \cite{HarpazWittenberg19} as well as the references therein.

\section{Hall sets}
\label{section on Hall sets}
Let $X$ be a nonempty set, considered as an alphabet.
Let again $X^*$ be the free monoid on $X$.
We consider its elements as \textsl{associative} words.
It is equipped with the binary operation $(u,v)\mapsto uv$ of associative concatenation.
Let $\calM_X$ be the \textsl{free magma} on $X$ (see \cite{SerreLie}*{Part I, Ch.\ IV, \S1}, \cite{Efrat17}*{\S2}).
Thus the elements of $\calM_X$ are the nonempty \textsl{non-associative} words in the alphabet $X$, and it is equipped with the binary operation $(u,v)\mapsto (uv)$ of non-associative concatenation.
There is a natural \textsl{foliage} (brackets dropping) map $f\colon \calM_X\to X^*$ which is the identity on $X$ (considered as a subset of both $\calM_X$ and $X^*$) and which commutes with the concatenation maps.

We fix a total order on $X$.
It induces on $X^*$ the \textsl{alphabetic} order $\leq_{\rm alp}$, which is also total.
We denote the length of a word $w\in X^*$ by $|w|$.

Let $\calH$ be a subset of words in $\calM_X$ and $\leq$ any total order on $\calH$.
We say that $(\calH,\leq)$ is a \textsl{Hall set in $\calM_X$}, if the following conditions hold \cite{Reutenauer93}*{\S4.1}:
\begin{enumerate}
\item[(1)]
$X\subseteq \calH$ as ordered sets;
\item[(2)]
If $h=(h'h'')\in\calH\setminus X$, then $h''\in\calH$ and $h<h''$;
\item[(3)]
For $h=(h'h'')\in \calM_X \setminus X$, one has $h\in\calH$ if and only if
\begin{itemize}
\item $h',h''\in\calH$ and $h'<h''$; and
\item either $h'\in X$, or $h'=(h_1h_2)$ with $h_2\geq h''$.
\end{itemize}
\end{enumerate}
In this case we say that $H=f(\calH)$ is a \textsl{Hall set in $X^*$}.

Every $w\in H$ can be written as $w=f(h)$ for a \textsl{unique} $h\in\calH$ \cite{Reutenauer93}*{Cor.\ 4.5}.
If $w\in H\setminus X$,  then we can uniquely write $h=(h'h'')$ with $h',h''\in \calH$ \cite{Reutenauer93}*{p.\ 89}.
Setting $w'=f(h'),w''=f(h'')\in H$, we call $w=w'w''$ the \textsl{standard factorization} of $w$.

\begin{exam}
\rm
The set of all Lyndon words in $X^*$ (see the Introduction) is a Hall set with respect to $\leq_{\rm alp}$ \cite{Reutenauer93}*{Th.\ 5.1}.
\end{exam}

The standard factorization of Lyndon words is explicitly given as follows:

\begin{lem}
Let $w,u,v\in X^*$ be nonempty words such that $w=uv$ and $w$ is Lyndon.
The following conditions are equivalent:
\begin{enumerate}
\item[(a)]
$w=uv$ is the the standard factorization of $w$ in the set of  Lyndon words.
\item[(b)]
$v$ is the $\leq_{\rm alp}$-minimal nontrivial right factor of $w$ which is Lyndon.
\item[(c)]
$v$ is the longest nontrival right factor of $w$ which is Lyndon.
\end{enumerate}
\end{lem}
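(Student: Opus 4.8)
The plan is to show that conditions (a), (b), (c) all single out the same nontrivial right factor $v$ of $w$. I would first dispose of the equivalence (b)$\Leftrightarrow$(c), which does not involve the Hall structure, and then tie both to the standard factorization (a) through the Hall-set axioms together with the fact (the Example above) that $\leq_{\rm alp}$ is the Hall order on the set of Lyndon words.

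For (b)$\Leftrightarrow$(c), let $P$ denote the set of proper nonempty suffixes of $w$ that are Lyndon; it is nonempty because the last letter of $w$ belongs to it. Any two suffixes of $w$ are nested, the shorter one being a suffix of the longer, so $P$ is totally ordered by length and its members have pairwise distinct lengths. If $v_1,v_2\in P$ with $|v_1|<|v_2|$, then $v_1$ is a proper suffix of the Lyndon word $v_2$, hence $v_2<_{\rm alp}v_1$ by the definition of a Lyndon word. Thus on $P$ the relations ``strictly longer'' and ``strictly $\leq_{\rm alp}$-smaller'' coincide, so $P$ has a unique longest element, which is simultaneously its unique $\leq_{\rm alp}$-minimal element. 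Conditions (b) and (c) both name this element, giving (b)$\Leftrightarrow$(c); in particular the longest Lyndon proper suffix is unique.

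For the link to (a) I would record the elementary fact that the $\leq_{\rm alp}$-smallest proper suffix $s$ of any word is itself Lyndon (a proper suffix of $s$ is a shorter proper suffix of the whole word, so cannot be $\leq_{\rm alp}s$ without contradicting minimality), and then prove (a)$\Rightarrow$(c) by strong induction on $|w|$. Writing the standard factorization as $w=uv$ with $h=(h'h'')$, $u=f(h')$, $v=f(h'')$, axiom (2) already gives that $v$ is a Lyndon proper suffix. The proper suffixes of $w$ longer than $v$ are exactly the words $sv$ with $s$ a proper nonempty suffix of $u$, and I must show none of them is Lyndon. If $u$ is a single letter (the first branch of axiom (3)) there is no such $s$. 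Otherwise axiom (3) supplies the standard factorization $u=u_1u_2$ with $u_2=f(h_2)$ and $u_2\geq_{\rm alp}v$ (translating $h_2\geq h''$ via the Hall order $\leq_{\rm alp}$); by the induction hypothesis applied to $u$, together with (b)$\Leftrightarrow$(c) and the fact just recorded, $u_2$ is the $\leq_{\rm alp}$-smallest proper suffix of $u$, so every proper suffix $s$ of $u$ satisfies $s\geq_{\rm alp}u_2\geq_{\rm alp}v$. A short case analysis (according as $v$ is or is not a prefix of $s$) then shows $sv>_{\rm alp}v$ in every case; since $v$ is a proper suffix of $sv$, this means $sv$ is not Lyndon. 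Hence $v$ is the longest Lyndon proper suffix, i.e.\ (c).

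Finally (c)$\Rightarrow$(a) is formal: the standard factorization $w=u^\ast v^\ast$ exists and is unique, and by (a)$\Rightarrow$(c) its right factor $v^\ast$ is the longest Lyndon proper suffix of $w$; by the uniqueness of that suffix we get $v^\ast=v$ and so $u^\ast=u$, i.e.\ any factorization satisfying (c) is the standard one. Together with (b)$\Leftrightarrow$(c) this closes the cycle. I expect the main obstacle to be the inductive step of (a)$\Rightarrow$(c): one has to convert the recursive Hall condition $h_2\geq h''$ of axiom (3) into the word-combinatorial assertion that the smallest proper suffix of $u$ dominates $v$, and then exclude every longer Lyndon suffix. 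The delicate point is the lexicographic comparison of $sv$ with $v$, because $\leq_{\rm alp}$ counts a prefix as smaller, and it is precisely this prefix case that decides whether the candidate $sv$ could be Lyndon.
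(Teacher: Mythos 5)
Your proof is correct, but it takes a genuinely different route from the paper's on the half of the lemma that involves the Hall structure. For (b)$\Leftrightarrow$(c) you and the paper argue identically: suffixes of $w$ are nested, and on the set of Lyndon proper suffixes the relation ``strictly longer'' coincides with ``strictly $\leq_{\rm alp}$-smaller,'' so the longest and the $\leq_{\rm alp}$-minimal element agree. The difference is in tying this to (a): the paper simply cites the proof of \cite{Reutenauer93}*{Th.\ 5.1} for (a)$\Leftrightarrow$(b), whereas you give a self-contained induction on $|w|$ proving (a)$\Rightarrow$(c) directly from the Hall-set axioms. Your induction is sound: axiom (2) makes $v=f(h'')$ a Lyndon proper suffix; axiom (3) plus the inductive hypothesis (together with your observation that the $\leq_{\rm alp}$-minimal proper suffix of any word is automatically Lyndon) identifies $u_2$ with the $\leq_{\rm alp}$-minimal proper suffix of $u$ and gives $s\geq_{\rm alp}u_2\geq_{\rm alp}v$ for every proper suffix $s$ of $u$; and the case analysis you flag does go through ($s=v$ or $v$ a proper prefix of $s$ makes $v$ a proper prefix of $sv$, hence $v<_{\rm alp}sv$; otherwise the first discrepancy between $s$ and $v$ forces $sv>_{\rm alp}v$; the case ``$s$ a proper prefix of $v$'' is excluded by $s\geq_{\rm alp}v$), so no $sv$ is Lyndon and (c) holds; (c)$\Rightarrow$(a) then follows from uniqueness of the standard factorization and of the longest Lyndon proper suffix. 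What your approach buys is independence from the external reference and an explicit demonstration of how the recursive condition $h_2\geq h''$ of the Hall axioms becomes a word-combinatorial statement; what it costs is length and the need to carry the auxiliary fact about minimal proper suffixes through the induction, which the paper avoids by delegating to Reutenauer.
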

\begin{proof}
(a)$\Leftrightarrow$(b): \quad
This is shown in the proof of \cite{Reutenauer93}*{Th.\ 5.1}.

\medskip

(b)$\Rightarrow$(c): \quad
Let $v'$ by a nontrivial Lyndon right factor of $w$.
By (b), $v\leq_{\rm alph} v'$.
Since $v'$ is Lyndon, $v$ cannot be a nontrivial right factor of $v'$.
Hence $v'$ is a right factor of $v$, so $|v'|\leq|v|$.

\medskip

(c)$\Rightarrow$(b): \quad
Let $v'$ by a nontrivial Lyndon right factor of $w$.
By (c), it is a right factor of $v$.
Since $v$ is Lyndon, $v\leq_{\rm alp}v'$.
\end{proof}

We order $\dbZ_{\geq0}\times X^*$ lexicographically with respect to the usual order on $\dbZ_{\geq0}$ and  $\leq_{\rm alp}$.
We then define a second total order  $\preceq$ on $X^*$ by setting
\begin{equation}
\label{preceq}
w_1\preceq w_2\quad \Longleftrightarrow\quad (|w_1|,w_1)\leq(|w_2|,w_2)
\end{equation}
with respect to the latter order on $\dbZ_{\geq0}\times X^*$.

\section{Lie algebras}
\label{section on Lie algebras}
Let $R$ be a unital commutative ring.
We write $R\langle X\rangle$ for the free associative $R$-algebra over the set $X$.
We view its elements as polynomials in the set $X$ of non-commuting variables and with coefficients in $R$.
Alternatively, it is the free $R$-module on the basis $X^*$ with multiplication induced by concatenation.
The algebra $R\langle X\rangle$ is graded with respect to total degree.

We write $R\langle\langle X\rangle\rangle$ for the $R$-algebra of formal power series in the set $X$ of non-commuting variables and with coefficients in $R$.

Let $k$ be a field.
For an associative $k$-algebra $A$, let $A_{\rm Lie}$ be the Lie algebra on $A$ with Lie bracket $[a,b]=ab-ba$.

We now assume that $X$ is  a nonempty totally ordered set, and fix a Hall set $H$ in $X^*$.

Let $L(X)$ be the free Lie $k$-algebra on the set $X$.
The universal enveloping algebra of $L(X)$ is $k\langle X\rangle$ \cite{SerreLie}*{Part I, Ch.\ IV, Th.\ 4.2}.

Let $L$ be a Lie $k$-algebra containing $X$.
Define a map $P_L=P_L^H\colon H\to L$ by $P_L(x)=x$ for $x\in X$, and $P_L(w)=[P_L(u),P_L(v)]$, if $w=uv$ is the standard factorization of $w$, as in \S\ref{section on Hall sets}.
This construction is functorial in $L$ in the natural sense.
%In particular, if $L=\dbZ\langle X\rangle_{\rm Lie}$, then we write $P_w=P_L(w)$ for $w\in H$ (compare \cite{Efrat17}).

\begin{prop}
\label{image of pi}
\begin{enumerate}
\item[(a)]
When $L=L(X)$,  the images $P_L(w)$, where $w\in H$, form a $k$-linear basis of  $L(X)$.
\item[(b)]
Let $L$ be a Lie $k$-algebra containing $X$.
Then the image of $P_L$ $k$-linearly spans the Lie $k$-subalgebra of $L$ generated by $X$.
\end{enumerate}
\end{prop}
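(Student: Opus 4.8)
The plan is to reduce part (b) to part (a) by functoriality, and to prove (a) by relating the Hall-set elements to the classical Poincaré--Birkhoff--Witt basis of the universal enveloping algebra $k\langle X\rangle$. For (b), let $L$ be any Lie $k$-algebra containing $X$, and let $L_0\subseteq L$ be the Lie subalgebra generated by $X$. The inclusion $X\hookrightarrow L_0$ extends, by the universal property of the free Lie algebra, to a Lie algebra homomorphism $\phi\colon L(X)\to L_0$; since the map $P$ is functorial in the Lie algebra and is defined recursively through the bracket, we have $P_{L_0}(w)=\phi(P_{L(X)}(w))$ for all $w\in H$. By part (a) the elements $P_{L(X)}(w)$ span $L(X)$ over $k$, and $\phi$ is surjective onto $L_0$ (its image is a Lie subalgebra containing $X$, hence all of $L_0$). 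Therefore the $P_{L_0}(w)=P_L(w)$ span $L_0$, which is exactly the assertion of (b). Thus the entire content lies in part (a).

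For part (a), the first step is to pass to the universal enveloping algebra. Since $k\langle X\rangle$ is the universal enveloping algebra of $L(X)$, by the Poincaré--Birkhoff--Witt theorem, if $(b_i)_{i\in I}$ is any $k$-linear basis of $L(X)$ indexed by a totally ordered set $I$, then the ordered monomials $b_{i_1}\cdots b_{i_m}$ with $i_1\leq\cdots\leq i_m$ form a $k$-linear basis of $k\langle X\rangle$. The strategy is therefore to show that the $P_L(w)$, $w\in H$, are $k$-linearly independent and spanning. I would establish this by exhibiting a triangularity relation between the PBW-type monomials built from the $P_L(w)$ (ordered by the Hall order $\leq$ on $H$) and the natural monomial basis $X^*$ of $k\langle X\rangle$ (ordered by an appropriate order, e.g.\ $\preceq$ from \eqref{preceq} or the alphabetic refinement).

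The key computational input is that for each $w\in H$, the foliage/bracketing $P_{L(X)}(w)$, expanded in the monomial basis $X^*$ of $k\langle X\rangle$, has the associative word $w=f(h)$ itself as its $\preceq$-maximal (or leading) term, appearing with coefficient $1$, together with strictly smaller correction terms. This is a standard fact for Hall bases (see \cite{Reutenauer93}*{Th.\ 4.9 and \S4.1}): the bracketing $[P(u),P(v)]$ of the standard factorization $w=uv$ produces $uv$ as leading term because $u\preceq v$ in the relevant order and the commutator $uv-vu$ keeps $uv$ dominant. Granting this, the map sending a nondecreasing sequence $w_1\leq\cdots\leq w_m$ in $H$ to the product $P_{L(X)}(w_1)\cdots P_{L(X)}(w_m)$ has leading monomial $f(w_1)\cdots f(w_m)$, and by the Hall factorization theorem \cite{Reutenauer93}*{Cor.\ 4.5 and Th.\ 4.9} every word in $X^*$ arises as exactly one such concatenation of Hall words in nonincreasing (or nondecreasing) order. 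This gives a unitriangular change of basis, so the ordered PBW monomials in the $P_{L(X)}(w)$ form a basis of $k\langle X\rangle$. By the uniqueness part of PBW, this forces the $P_{L(X)}(w)$ themselves to be a $k$-linear basis of $L(X)$, proving (a).

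The main obstacle I anticipate is verifying the leading-term (triangularity) claim precisely, namely that the bracket in the standard factorization always produces the concatenated word as its dominant monomial under a single fixed order on $X^*$. This is delicate because the Hall order $\leq$ on $\calH$ and the alphabetic or $\preceq$ order on $X^*$ interact through the recursive definition of $P_L$, and one must confirm that the defining Hall conditions (2)--(3) in \S\ref{section on Hall sets} are exactly what guarantee the inequality $f(h')\prec f(h'')$ (or its appropriate variant) needed for $uv$ to dominate $vu$ at each bracketing step. Rather than rederive this combinatorial core from scratch, I would invoke the corresponding structural results for Hall sets in \cite{Reutenauer93}*{\S4.1}, which precisely establish both the triangularity and the unique factorization of arbitrary words into nonincreasing sequences of Hall words; the remaining work is then the formal PBW-uniqueness deduction, which is routine.
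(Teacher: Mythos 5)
Your proposal is correct and follows essentially the same route as the paper: part (b) is deduced from (a) exactly as in the paper, via the universal property of $L(X)$ and the functoriality of $P_L$. For part (a) the paper simply cites \cite{Reutenauer93}*{Th.\ 4.9(i)}; your sketch of the triangularity-plus-PBW argument is the standard proof of that cited result, and since you ultimately defer the combinatorial core (leading-term triangularity and unique factorization into decreasing Hall words) to \cite{Reutenauer93}*{\S4.1} as well, there is no substantive difference in approach.
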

\begin{proof}
(a) \quad
See \cite{Reutenauer93}*{Th.\ 4.9(i)}.

\medskip

(b) \quad
This follows from (a), the universal property of $L(X)$, and the functoriality of $P_L$.
\end{proof}

By Proposition \ref{image of pi}(a) and the Poincar\'e--Birkhoff--Witt theorem \cite{SerreLie}*{Part I, Ch.\ III, \S4}, the products $\prod_{i=1}^mP_{L(X)}(w_i)$, with $w_1\geq_{\rm alp}\cdots\geq_{\rm alp}w_m$ in $H$, form a $k$-linear basis of the universal enveloping algebra $k\langle X\rangle$ of $L(X)$.

Next assume that $\Char\,k=p>0$.
A \textsl{restricted Lie $k$-algebra} $L$ is a Lie $k$-algebra with an additional unary operation $a\mapsto a^{[p]}$ for which there is an associative $k$-algebra $A$ and a Lie $k$-algebra monomorphism $\theta\colon L\to A_{\rm Lie}$ such that $\theta(a^{[p]})=\theta(a)^p$ for every $a\in L$ (\cite{DixonDuSautoyMannSegal99}*{\S12.1}; see also \cite{Jacobson62} for an alternative equivalent definition).
A morphism of restricted Lie $k$-algebras is a morphism of Lie $k$-algebras which commutes with the $(\cdot)^{[p]}$-maps.

Every associative $k$-algebra  $A$ is endowed with the structure of a restricted Lie algebra $A_{\rm res.Lie}$, where we set $[a,b]=ab-ba$ and $a^{[p]}=a^p$.
Every restricted Lie $k$-algebra $L$ has a unique \textsl{restricted universal enveloping algebra} $\calU_{\rm res}(L)$.
This means that $\calU_{\rm res}(L)$ is an associative $k$-algebra, and the functor $A\mapsto A_{\rm res.Lie}$, from the category of  associative $k$-algebras to the category of restricted Lie $k$-algebras,  and the functor $L\mapsto \calU_{\rm res}(L)$ from the category of restricted Lie $k$-algebras to the category of associative $k$-algebras, are adjoint  (\cite{DixonDuSautoyMannSegal99}*{\S12.1}, \cite{Jacobson62}*{Ch.\ V, Th.\ 12}).

Given a restricted Lie $k$-algebra $L$ containing $X$, we define a map $\widehat{P}_L=\widehat{P}_L^H\colon \dbZ_{\geq0}\times H\to L$ by $\widehat{P}_L(j,w)=P_L(w)^{[p]^j}$ (where $(\cdot)^{[p]^j}$ denotes applying  $j$ times the operation $(\cdot)^{[p]}$).
In analogy with Proposition \ref{image of pi}(b) we have:

\begin{prop}
\label{image of hat pi}
The image of $\widehat{P}_L$ $k$-linearly spans the restricted Lie $k$-subalgebra of $L$ generated by $X$.
\end{prop}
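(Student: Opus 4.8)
The plan is to identify the restricted Lie subalgebra generated by $X$ explicitly and then match it with the $k$-span of the image of $\widehat{P}_L$. Write $\grh$ for the ordinary Lie $k$-subalgebra of $L$ generated by $X$; by Proposition \ref{image of pi}(b) it is $k$-spanned by the elements $P_L(w)$, $w\in H$. For $j\ge0$ put $V_j=\Span_k\{a^{[p]^j} : a\in\grh\}$ (so $V_0=\grh$), and set $\widetilde{\grh}=\sum_{j\ge0}V_j$. Since every restricted Lie subalgebra containing $X$ is in particular a Lie subalgebra containing $X$, hence contains $\grh$, the restricted Lie subalgebra $R$ generated by $X$ coincides with the one generated by $\grh$. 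I would prove the proposition in two steps: first that $\widetilde{\grh}=R$, and then that $\widetilde{\grh}$ equals the $k$-span $W$ of the image of $\widehat{P}_L$.

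The computations rest on two standard facts about a restricted Lie algebra, both read off from an embedding $\theta\colon L\hookrightarrow A_{\rm Lie}$ as in the definition. First, since $\ad(c^p)=(\ad c)^p$ holds in any associative $k$-algebra of characteristic $p$ (because $\ad c$ is the difference of the commuting left and right multiplications by $c$, so one may apply the identity $(u-v)^p=u^p-v^p$ for commuting $u,v$), one obtains $\ad(a^{[p]})=(\ad a)^p$, and hence $\ad(a^{[p]^j})=(\ad a)^{p^j}$, in $L$. Second, Jacobson's formula gives $(a+b)^{[p]}=a^{[p]}+b^{[p]}+\sum_{i=1}^{p-1}s_i(a,b)$, where each $s_i(a,b)$ is an iterated Lie bracket in $a$ and $b$; together with $(\lambda a)^{[p]}=\lambda^p a^{[p]}$ this controls the $[p]$-powers of sums.

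To see that $\widetilde{\grh}$ is a restricted Lie subalgebra, I would first show it is closed under the bracket. For $a,b\in\grh$ one has $[a^{[p]^j},b]=(\ad a)^{p^j}(b)\in\grh$, since $\ad a$ preserves the Lie subalgebra $\grh$; thus $[V_j,\grh]\subseteq\grh$, and therefore $\ad(b)$ preserves $\widetilde{\grh}$ for every $b\in\grh$. Consequently, for $x\in\widetilde{\grh}$ and a spanning element $b^{[p]^j}$ of $V_j$, one gets $[x,b^{[p]^j}]=-(\ad b)^{p^j}(x)\in\widetilde{\grh}$, and bilinearity yields $[\widetilde{\grh},\widetilde{\grh}]\subseteq\widetilde{\grh}$. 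Closure under $(\cdot)^{[p]}$ then follows from Jacobson's formula: writing $x\in\widetilde{\grh}$ as a $k$-combination of spanning elements $z_\ell=a_\ell^{[p]^{j_\ell}}$, the formula expresses $x^{[p]}$ as $\sum_\ell c_\ell^p\, z_\ell^{[p]}$ plus iterated brackets of the $z_\ell$; the bracket terms lie in $\widetilde{\grh}$ by the previous step, while $z_\ell^{[p]}=a_\ell^{[p]^{j_\ell+1}}\in V_{j_\ell+1}$. Hence $\widetilde{\grh}$ is a restricted Lie subalgebra containing $X$, and minimality forces $R=\widetilde{\grh}$.

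It remains to identify $\widetilde{\grh}$ with $W$. The inclusion $W\subseteq\widetilde{\grh}$ is immediate, since $P_L(w)\in\grh$ gives $P_L(w)^{[p]^j}\in V_j$. For the reverse inclusion I would prove $V_j\subseteq W$ by induction on $j$, and here lies the only real subtlety, which I expect to be the main obstacle: because $(\cdot)^{[p]}$ is merely Frobenius-semilinear and obeys Jacobson's formula rather than additivity, an element $a=\sum_w c_w P_L(w)$ of $\grh$ satisfies $a^{[p]^j}=\sum_w c_w^{p^j}P_L(w)^{[p]^j}$ only \emph{modulo} a correction built from iterated brackets and from $[p]$-powers of height $<j$. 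Expanding these corrections and repeatedly applying the two identities above, one checks that they lie in $\sum_{i<j}V_i$, so that $V_j\subseteq\Span_k\{P_L(w)^{[p]^j}\}+\sum_{i<j}V_i\subseteq W$ by the induction hypothesis. The essential point to verify carefully is thus that every correction term produced by Jacobson's formula is absorbed by the lower-height components $V_i$, $i<j$, rather than escaping $W$. This gives $\widetilde{\grh}=W$ and completes the proof.
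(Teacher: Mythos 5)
Your proof is correct and follows essentially the same route as the paper's: both rest on Proposition \ref{image of pi}(b), the identity $\ad(a^{[p]})=(\ad a)^p$ (equivalently $[\alpha,\beta^{[p]}]=[\alpha,_p,\beta]$) for closure under the Lie bracket, and Jacobson's formula (the characteristic-$p$ binomial formula) for closure under $(\cdot)^{[p]}$. The only difference is your detour through the auxiliary spaces $V_j$, which the paper avoids by verifying the two closure properties directly for the $k$-span of the image of $\widehat{P}_L$, so that your final induction reducing $V_j$ to that span becomes unnecessary.
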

\begin{proof}
Let $\widehat{L}_0$ be the $k$-linear subspace of $L$ spanned by $\Img(\widehat{P}_L)$.
Let $L_0$ be the $k$-linear subspace of $L$ spanned by $\Img(P_L)$.
Clearly, $X\subseteq L_0\subseteq \widehat{L}_0$.
By Proposition \ref{image of pi}(b), $L_0$ is the Lie $k$-subalgebra of $L$ generated by $X$.

Since $\Char\,k=p$, the binomial formula implies that the subspace $\widehat{L}_0$ is closed under $(\cdot)^{[p]}$.

If $w,u\in H$, then $[P_L(w),P_L(u)]\in L_0$.
It follows from the $k$-bilinearity of the Lie bracket that for every $\alp,\beta\in L_0$ also $[\alp,\beta]\in L_0$.
By induction on $m\geq1$, the $m$-times iterated Lie brackets
\[
[\alp,_m,\beta]=[\alp,[\alp,[\cdots [\alp,\beta]\cdots]]], \  \ [\alp,\beta,_m]=[\cdots[[\alp,\beta],\beta],\cdots,\beta]
\]
are also contained in $L_0$.
Using the identities $[\alp,\beta^{[p]}]=[\alp,_p,\beta]$ and $[\alp^{[p]},\beta]=[\alp,\beta,_p]$ (see \cite{DixonDuSautoyMannSegal99}*{p.\ 297}), we deduce that $[\alp^{[p]^j},\beta^{[p]^r}]\in L_0$ for every $j,r\geq0$.
By the bilinearity again, $\widehat{L}_0$ is therefore closed under the Lie bracket.

Hence $\widehat{L}_0$ is the restricted Lie $k$-subalgebra of $L$ generated by $X$.
\end{proof}

There is a \textsl{free restricted $k$-algebra} $\widehat{L}(X)$ on the generating set $X$, with the standard universal property.
It is the restricted Lie $k$-subalgebra of $k\langle X\rangle_{\rm res.Lie}$ generated by $X$, and its restricted universal enveloping algebra is $k\langle X\rangle$ \cite{Gartner11}*{Prop.\ 1.2.7}.
We note that in the algebra $k\langle X\rangle$ one has
\[
\widehat{P}_{\widehat{L}(X)}(j,w)=P_{L(X)}(w)^{p^j}
\]
for every $j\geq0$ and $w\in H$.
The following analog of Proposition \ref{image of pi}(a) generalizes a result of G\"artner \cite{Gartner11}*{Th.\ 1.2.11} (who considers a specific Hall family $H$).

\begin{cor}
\label{basis of Lres(X)}
The polynomials $\widehat{P}_{\widehat{L}(X)}(j,w)$, where $j\geq0$ and $w\in H$, form a $k$-linear basis of $\widehat{L}(X)$.
\end{cor}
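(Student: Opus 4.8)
The plan is to obtain the statement directly from the two Poincar\'e--Birkhoff--Witt pictures already assembled above, treating spanning and linear independence separately. Spanning is immediate: since $\widehat{L}(X)$ is by construction the restricted Lie $k$-subalgebra of $k\langle X\rangle_{\rm res.Lie}$ generated by $X$, Proposition \ref{image of hat pi} applied to $L=\widehat{L}(X)$ shows that $\Img(\widehat{P}_{\widehat{L}(X)})$ $k$-linearly spans all of $\widehat{L}(X)$. Thus it remains only to prove that the elements $\widehat{P}_{\widehat{L}(X)}(j,w)$, for $j\geq0$ and $w\in H$, are $k$-linearly independent; and since $\widehat{L}(X)\subseteq k\langle X\rangle$, it suffices to prove their independence inside the associative algebra $k\langle X\rangle$.

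The key observation I would use is that each such element is literally a member of a PBW basis of $k\langle X\rangle$. Indeed, by Proposition \ref{image of pi}(a) the family $\{P_{L(X)}(w):w\in H\}$ is a $k$-basis of $L(X)$, and hence, by the Poincar\'e--Birkhoff--Witt theorem, the ordered products $\prod_{i=1}^m P_{L(X)}(w_i)$ with $w_1\geq_{\rm alp}\cdots\geq_{\rm alp}w_m$ in $H$ form a $k$-basis of the enveloping algebra $k\langle X\rangle$. Taking a single repeated factor $w_1=\cdots=w_{p^j}=w$ exhibits $P_{L(X)}(w)^{p^j}$ as one of these basis monomials, and by the identity noted just before the statement we have $\widehat{P}_{\widehat{L}(X)}(j,w)=P_{L(X)}(w)^{p^j}$ in $k\langle X\rangle$. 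Hence every $\widehat{P}_{\widehat{L}(X)}(j,w)$ is an ordinary PBW basis element.

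It then remains to check that distinct pairs $(j,w)$ yield distinct basis monomials: this holds because $w\mapsto P_{L(X)}(w)$ is injective on $H$ (the $P_{L(X)}(w)$ being a basis of $L(X)$, in particular pairwise distinct), so an equality $P_{L(X)}(w)^{p^j}=P_{L(X)}(w')^{p^{j'}}$ of PBW monomials forces the two to use the same single basis vector, whence $w=w'$ and then $p^j=p^{j'}$, i.e.\ $j=j'$. Thus $\{\widehat{P}_{\widehat{L}(X)}(j,w)\}$ is in bijection with a subset of a $k$-basis of $k\langle X\rangle$ and is therefore $k$-linearly independent; combined with the spanning statement this proves the corollary. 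I do not expect a genuine obstacle here: the whole content lies in recognizing that the associative $p^j$-th power $P_{L(X)}(w)^{p^j}$ coincides with the restricted enveloping-algebra element $\widehat{P}_{\widehat{L}(X)}(j,w)$ while simultaneously being an ordinary PBW monomial, so that $k\langle X\rangle$ furnishes a common ambient space in which the ``unrestricted'' basis and the desired family can be compared. The only point requiring care is to keep the maps $P_{L(X)}$ and $\widehat{P}_{\widehat{L}(X)}$, and the enveloping algebras $\calU$ and $\calU_{\rm res}$, from being conflated; once $k\langle X\rangle$ is fixed as the arena, the argument is purely combinatorial.
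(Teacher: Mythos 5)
Your proposal is correct and follows essentially the same route as the paper: spanning via Proposition \ref{image of hat pi} applied inside $k\langle X\rangle$, and linear independence by recognizing each $\widehat{P}_{\widehat{L}(X)}(j,w)=P_{L(X)}(w)^{p^j}$ as one of the Poincar\'e--Birkhoff--Witt basis monomials $\prod_{i=1}^m P_{L(X)}(w_i)$ (with all factors equal to $w$). The paper's proof is just a terser version of yours; your extra check that distinct pairs $(j,w)$ give distinct PBW monomials is implicit there.
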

\begin{proof}
We consider $\widehat{L}(X)$ as  a $k$-linear subspace of $k\langle X\rangle$.
By Proposition \ref{image of hat pi}, it is spanned by the powers $\widehat{P}_{\widehat{L}(X)}(j,w)$, where $j\geq0$ and $w\in H$.
As observed above, the products $\prod_{i=1}^mP_{L(X)}(w_i)$, with $w_1\geq_{\rm alp}\cdots\geq_{\rm alp}w_m$ in $H$, form a $k$-linear basis of $k\langle X\rangle$.
In particular, the powers $\widehat{P}_{\widehat{L}(X)}(j,w)=P_{L(X)}(w)^{p^j}$ are $k$-linearly independent.
Hence they form a $k$-linear basis of $L(X)_{\rm res}$.
\end{proof}

We grade $L(X)$ and $\widehat{L}(X)$ by total degree, and write $L(X)_n$, $\widehat{L}(X)_n$ for their homogenous components of degree $n$.

\begin{cor}
\label{basis for n component of Lres(X)}
Let $n$ be a positive integer.
\begin{enumerate}
\item[(a)]
The $P_{L(X)}(w)$, with $w\in H$ and $|w|=n$, form a $k$-linear basis of $L(X)_n$.
\item[(b)]
The $\widehat{P}_{\widehat{L}(X)}(j,w)$, with  $j\geq0$ and $w\in H$ satisfying $n=|w|p^j$, form a $k$-linear basis of $\widehat{L}(X)_n$.
\end{enumerate}
\end{cor}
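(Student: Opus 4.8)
The plan is to deduce both parts from the graded refinement of the two enveloping-algebra bases already established. First I would prove part (a). By Proposition~\ref{image of pi}(a), the elements $P_{L(X)}(w)$ with $w\in H$ form a $k$-linear basis of $L(X)$. The map $P_{L(X)}$ is built from the Lie bracket via standard factorizations, and since the Lie bracket on $L(X)\subseteq k\langle X\rangle$ is additive in total degree, each $P_{L(X)}(w)$ is homogeneous of degree exactly $|w|$ (this follows by an easy induction on $|w|$ using the standard factorization $w=w'w''$, where $|w|=|w'|+|w''|$). Hence the basis of $L(X)$ splits according to word length, and the subfamily with $|w|=n$ is precisely a basis of the homogeneous component $L(X)_n$.

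For part (b) I would argue analogously, but now using Corollary~\ref{basis of Lres(X)}, which gives that the $\widehat{P}_{\widehat{L}(X)}(j,w)=P_{L(X)}(w)^{p^j}$, with $j\geq0$ and $w\in H$, form a $k$-linear basis of $\widehat{L}(X)$. The key observation is that this basis is again homogeneous for the grading by total degree: the element $P_{L(X)}(w)^{p^j}$ is homogeneous of degree $|w|p^j$, since raising a homogeneous element of degree $|w|$ to the $p^j$-th power in the graded algebra $k\langle X\rangle$ yields a homogeneous element of degree $|w|p^j$. Therefore the full basis decomposes into its homogeneous pieces, and the component of degree $n$ is spanned by exactly those basis elements $\widehat{P}_{\widehat{L}(X)}(j,w)$ satisfying $|w|p^j=n$. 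Being a subfamily of a basis, these elements are linearly independent, and by homogeneity they span $\widehat{L}(X)_n$, so they form a $k$-linear basis of $\widehat{L}(X)_n$.

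The only genuine content beyond citing the two preceding results is the homogeneity of the respective basis elements, and this is where I would be most careful rather than where I expect real difficulty. The mild subtlety is that $\widehat{L}(X)$ is a \emph{restricted} Lie algebra, so one must confirm that its grading by total degree (inherited from $k\langle X\rangle$) is compatible with the $(\cdot)^{[p]}$-operation in the sense that the $p^j$-th restricted power lands in degree $|w|p^j$; inside $k\langle X\rangle$ this is immediate because $a^{[p]}=a^p$ there, so no separate verification of grading compatibility is needed. Thus the main obstacle is essentially bookkeeping: making the degree-tracking through the iterated bracket and iterated $p$-power operations explicit enough to justify splitting each global basis into its homogeneous strata.
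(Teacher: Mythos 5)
Your proposal matches the paper's proof: both parts are deduced from Proposition~\ref{image of pi}(a) and Corollary~\ref{basis of Lres(X)} respectively, using the observation that $P_{L(X)}(w)$ has degree $|w|$ and $\widehat{P}_{\widehat{L}(X)}(j,w)=P_{L(X)}(w)^{p^j}$ has degree $|w|p^j$ in $k\langle X\rangle$, so the homogeneous bases split by degree. Your extra remarks on the induction for homogeneity and the compatibility of the grading with the restricted power are correct bookkeeping that the paper leaves implicit.
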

\begin{proof}
(a) \quad
This follows from Proposition \ref{image of pi}(a), since $P_{L(X)}(w)$ has degree $|w|$ in $k\langle X\rangle$.
\medskip

(b)\quad
This follows from Corollary \ref{basis of Lres(X)}, since $\widehat{P}_{\widehat{L}(X)}(j,w)$ has degree $|w|p^j$ in $k\langle X\rangle$.
\end{proof}

\section{The $p$-Zassenhaus filtration}
\label{section on the p-Zassenhaus filtration}
We fix as before a prime number $p$.
For an integer $1\leq i \leq n$ let $j_n(i)=\lceil\log_p(n/i)\rceil$, i.e., $j_n(i)$ is the least integer $j$ such that $ip^j\geq n$.

\begin{lem}
\label{conditions for J(n)}
The following conditions on $1\leq i\leq n$ are equivalent:
\begin{enumerate}
\item[(a)]
 $i'p^{j_n(i')}\geq ip^{j_n(i)}$ for every $1\leq i'\leq i$.
\item[(b)]
$i=\lceil n/p^k\rceil$ for some $k\geq0$.
\end{enumerate}
\end{lem}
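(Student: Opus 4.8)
The plan is to recast the two conditions in terms of the single function $g(i):=ip^{j_n(i)}$ on $\{1,\dots,n\}$ together with the auxiliary sequence $a_k:=\lceil n/p^k\rceil$, $k\geq0$. Condition (a) says exactly that $i$ is a (weak) left-to-right minimum of $g$, i.e.\ $g(i)\leq g(i')$ for all $1\leq i'\leq i$, whereas condition (b) says that $i$ lies in $\{a_k:k\geq0\}$. Two elementary monotonicity facts will drive everything. First, since $n/i$ decreases with $i$, the exponent $j_n(i)=\lceil\log_p(n/i)\rceil$ is weakly decreasing in $i$. Second, from $ip^{j_n(i)}\geq n$ one gets $i\geq\lceil n/p^{j_n(i)}\rceil=a_{j_n(i)}$, and hence $g(i)=ip^{j_n(i)}\geq a_{j_n(i)}p^{j_n(i)}$ for every $i$; likewise $a_kp^k\geq n$ forces $j_n(a_k)\leq k$.

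The crux of the argument, and the step I expect to carry the real content, is the monotonicity claim
\[
a_kp^k\leq a_{k+1}p^{k+1}\qquad(k\geq0).
\]
It follows at once from the elementary inequality $\lceil px\rceil\leq p\lceil x\rceil$ (valid for positive real $x$ and the positive integer $p$), applied with $x=n/p^{k+1}$: this gives $a_k=\lceil n/p^k\rceil\leq p\lceil n/p^{k+1}\rceil=p\,a_{k+1}$, and multiplying by $p^k$ yields the claim. Conceptually, $\{1,\dots,n\}$ splits into the intervals $B_j=\{i:j_n(i)=j\}$, on each nonempty one of which $g$ is the strictly increasing map $i\mapsto ip^j$ with smallest element $a_j$; the monotonicity of $a_kp^k$ then says that as $i$ grows the successive block-minimum values of $g$ only decrease, so that each left endpoint $a_j$ is a genuine record.

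For (b)$\Rightarrow$(a), suppose $i=a_k$. By the second fact $i\geq a_{j_n(i)}$, while $j_n(i)\leq k$ and the non-increasingness of $a_\bullet$ give $a_{j_n(i)}\geq a_k=i$; hence $a_{j_n(i)}=i$ and $g(i)=a_{j_n(i)}p^{j_n(i)}$. Now for any $i'\leq i$ set $j'=j_n(i')\geq j_n(i)$ (first fact). Then
\[
g(i')\geq a_{j'}p^{j'}\geq a_{j_n(i)}p^{j_n(i)}=g(i),
\]
using the second fact and then the monotonicity claim. Thus $i$ is a left-to-right minimum of $g$, i.e.\ (a) holds.

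For (a)$\Rightarrow$(b) I would argue the contrapositive. If $i\notin\{a_k:k\geq0\}$, put $j=j_n(i)$; then $i\geq a_j$ by the second fact, and $i\neq a_j$, so $a_j<i$. Since $j_n(a_j)\leq j$ we obtain
\[
g(a_j)=a_jp^{j_n(a_j)}\leq a_jp^j<ip^j=g(i),
\]
exhibiting an index $a_j$ with $1\leq a_j<i$ and $g(a_j)<g(i)$; hence (a) fails. Combining the two implications gives the equivalence. The only genuinely non-formal input is the monotonicity of $a_kp^k$; everything else is bookkeeping with the two monotonicity facts, so the main obstacle is simply recognizing and establishing that key inequality.
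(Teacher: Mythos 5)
Your proof is correct and follows essentially the same route as the paper's: both hinge on the monotonicity $\lceil n/p^k\rceil p^k\leq\lceil n/p^{k+1}\rceil p^{k+1}$ (your $\lceil px\rceil\leq p\lceil x\rceil$ is exactly the paper's inequality $n/p^k\leq p\lceil n/p^{k+1}\rceil$) together with the observation that $i\geq\lceil n/p^{j_n(i)}\rceil$. The only cosmetic differences are that you argue (a)$\Rightarrow$(b) by contrapositive and get by with $j_n(a_k)\leq k$ where the paper establishes the equality $j_n(i_k)=k$; the witness $i'$ produced in that direction is the same in both arguments.
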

\begin{proof}
Set $i_k=\lceil n/p^k\rceil$.
Thus $i_0=n$, and the sequence $i_k$ is weakly decreasing to $1$.
We may restrict ourselves to $k$ such that $p^k\leq n$.
Then $(n/p^k)+1\leq n/p^{k-1}$, so $n/p^k\leq i_k<n/p^{k-1}$.
Thus $j_n(i_k)=k$. %, and $i_kp^{j_n(i_k)}=\lceil n/p^k\rceil p^k$.

Since $n/p^k\leq\lceil n/p^{k+1}\rceil p$, one has $i_kp^k\leq i_{k+1}p^{k+1}$, i.e.,  the sequence $i_kp^{j_n(i_k)}$ is weakly increasing in the above range.

We also observe that if $i< i_{k-1}$, then $i<n/p^{k-1}$, i.e.,  $j_n(i)\geq k$.

\medskip

(a)$\Rightarrow$(b):
\quad
Since (b) certainly holds for $i=n$, we may assume that $i< n$, so there is $k$ in the above range such that $i_k\leq  i< i_{k-1}$.
By the previous observation, $j_n(i)\geq k$.
We take in (a) $i'=i_k$ to obtain that
\[
i_kp^{j_n(i_k)}\geq ip^{j_n(i)}\geq  i_kp^k=i_kp^{j_n(i_k)}.
\]
Hence $i=i_k$.

\medskip

(b)$\Rightarrow$(a): \quad
Suppose that $1\leq i'<i_k$.
There exists $l$ in the above range such that $i_l\leq i'<i_{l-1}$.
Necessarily $l>k$, so $i_lp^l\geq  i_kp^k$.
As we have observed, $j_n(i')\geq l$.
Hence
\[
i'p^{j_n(i')}\geq  i_lp^l\geq  i_kp^k=i_kp^{j_n(i_k)}.
\qedhere
\]
\end{proof}

We define $J(n)$ to be the set of all $1\leq i\leq n$ such that the equivalent conditions of Lemma \ref{conditions for J(n)} hold.

\begin{rem}
\label{J(n) for p large}
\rm
(1) \quad
When $n\leq p$ one has $J(n)=\{1,n\}$.

\medskip

(2) \quad
Let $1\neq i\in J(n)$ and take $k$ such that $i=\lceil n/p^k\rceil$.
By the first paragraph of the proof of Lemma \ref{conditions for J(n)}, $j_n(i)=k$.
\end{rem}

Now let $G$ be a profinite group.
Given closed subgroups $K,K'$ of $G$ and a positive integer $m$, we write $[K,K']$ (resp., $K^m$) for the closed subgroup of $G$ generated by all commutators $[k,k']=k\inv (k')\inv kk'$ (resp., powers $k^m$) with $k\in K$ and $k'\in K'$.

Recall that the (profinite) \textsl{lower central series} $G^{(i)}$, $i=1,2\nek$ of $G$ is defined inductively by
$G^{(1)}=G$, $G^{(i+1)}=[G,G^{(i)}]$.
As in the Introduction, we denote the \textsl{$p$-Zassenhaus filtration} of $G$ by $G_{(n,p)}$, $n=1,2,\ldots$ .
Since $G^{(i)}\leq G^{(n)}$ for $i>n$,
\[
G_{(n,p)}=\prod_{ip^j\geq n}(G^{(i)})^{p^j}=\prod_{i=1}^n(G^{(i)})^{p^{j_n(i)}}.
\]
The subgroups $G_{(n,p)}$ of $G$ are characteristic, hence normal.
We note that $G^{(n)}\leq G_{(n,p)}$.

The Zassenhaus filtration  can also be defined inductively by
\begin{equation}
\label{inductive definition}
G_{(1,p)}=G, \quad G_{(n,p)}=(G_{(\lceil n/p\rceil,p)})^p\prod_{i+j=n}[G_{(i,p)},G_{(j,p)}]
\end{equation}
for $n\geq2$.
Indeed, this follows from a theorem of Lazard in the case of discrete groups (\cite{DixonDuSautoyMannSegal99}*{Th.\ 11.2}, \cite{Lazard65}*{3.14.5}), and the profinite analog follows by a density argument.
It follows from (\ref{inductive definition}) that for $n\geq2$,
\begin{equation}
\label{np}
G_{(np,p)}\leq (G_{(n,p)})^p[G,G_{(n,p)}].
\end{equation}

Let $r\geq0$.
The following identity was proved  in the discrete case by Shalev \cite{Shalev90}*{Prop.\ 1.2};
the profinite analog follows again by a density argument:
\[
\prod_{ip^j\geq n}(G^{(i+r+1)})^{p^j}=[G,\prod_{ip^j\geq n}(G^{(i+r)})^{p^j}].
\]
In particular,
\begin{equation}
\label{Shalev Formula}
\prod_{ip^j\geq n}(G^{(i+1)})^{p^j}=[G,G_{(n,p)}].
\end{equation}

\begin{prop}
\label{minimal vanish}
Let $1\leq i\leq n$ be an integer such that $i\not\in J(n)$.
Then
\[
(G^{(i)})^{p^{j_n(i)}}\leq (G_{(n,p)})^p[G,G_{(n,p)}].
\]
\end{prop}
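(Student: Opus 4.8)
The plan is to reduce the statement to a single numerical inequality and then read the required subgroup directly off Shalev's identity (\ref{Shalev Formula}). Write $k=j_n(i)$, and rename the bound indices in Shalev's product to $a,b$, so that $[G,G_{(n,p)}]=\prod_{ap^b\geq n}(G^{(a+1)})^{p^b}$. The claim I would isolate is that the hypothesis $i\notin J(n)$ forces
\[
(i-1)p^{k}\geq n .
\]
Granting this, the factor of Shalev's product indexed by the pair $(a,b)=(i-1,k)$ is exactly $(G^{((i-1)+1)})^{p^{k}}=(G^{(i)})^{p^{j_n(i)}}$, so (\ref{Shalev Formula}) places this subgroup inside $[G,G_{(n,p)}]$, and hence inside the larger group $(G_{(n,p)})^p[G,G_{(n,p)}]$. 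Note that here $i\geq2$: indeed $1\in J(n)$, since $1=\lceil n/p^k\rceil$ as soon as $p^k\geq n$ (cf.\ Remark \ref{J(n) for p large}), so the index $a=i-1\geq1$ is admissible.

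The heart of the matter is the numerical claim, which is really the contrapositive of the characterization of $J(n)$ in Lemma \ref{conditions for J(n)}. I would argue by contradiction: suppose $(i-1)p^{k}<n$. Since $k=j_n(i)$ is by definition the least integer with $ip^k\geq n$, we have $ip^k\geq n$ as well, so
\[
i-1<\frac{n}{p^{k}}\leq i .
\]
These two inequalities say precisely that $\lceil n/p^{k}\rceil=i$, i.e.\ condition (b) of Lemma \ref{conditions for J(n)} holds, whence $i\in J(n)$ --- contradicting the hypothesis. Therefore $(i-1)p^{k}\geq n$, as wanted. (Incidentally this computation shows that, for $i\geq2$, one has $i\in J(n)\Leftrightarrow(i-1)p^{j_n(i)}<n$, a convenient reformulation of membership in $J(n)$.)

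The only substantive input is Shalev's identity (\ref{Shalev Formula}); everything else is the elementary ceiling estimate above, so I do not anticipate a serious obstacle. The single point requiring care is the bookkeeping needed to verify that $(a,b)=(i-1,k)$ is a legitimate index in the product defining $[G,G_{(n,p)}]$ (that is, $a\geq1$, $b\geq0$, and $ap^b\geq n$), which is exactly what the two paragraphs above secure. It is worth recording that the argument in fact yields the stronger containment $(G^{(i)})^{p^{j_n(i)}}\leq[G,G_{(n,p)}]$; the power factor $(G_{(n,p)})^p$ appearing in the stated conclusion is never actually needed for indices $i\notin J(n)$, and is present only because $(G_{(n,p)})^p[G,G_{(n,p)}]$ is the natural subgroup against which the filtration, and the dual of $H^2(S/S_{(n,p)})$, are compared.
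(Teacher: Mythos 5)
Your proof is correct, and while it rests on the same essential input as the paper's — Shalev's identity (\ref{Shalev Formula}) — the reduction to that identity is genuinely different and cleaner. The paper negates condition (a) of Lemma \ref{conditions for J(n)}: it picks $i'<i$ with $i'p^{j_n(i')}<ip^{j_n(i)}$ and then splits into the cases $j_n(i)>j_n(i')$ (where the containment lands in $(G_{(n,p)})^p$) and $j_n(i)\leq j_n(i')$ (where Shalev's identity is invoked at the index pair $(i'p^{j_n(i')-j_n(i)},\,j_n(i))$). You instead negate condition (b) and distill the hypothesis into the single inequality $(i-1)p^{j_n(i)}\geq n$, which feeds directly into Shalev's product at the index $(i-1,\,j_n(i))$. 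Your ceiling computation $i-1<n/p^{j_n(i)}\leq i\Rightarrow\lceil n/p^{j_n(i)}\rceil=i$ is sound, and the necessary bookkeeping ($i\geq 2$ because $1\in J(n)$, hence $a=i-1\geq 1$) is in place. Two things your route buys: the strictly stronger containment $(G^{(i)})^{p^{j_n(i)}}\leq[G,G_{(n,p)}]$, and the elimination of the paper's first case — which is in fact vacuous, since $i'<i$ forces $j_n(i')\geq j_n(i)$, so the paper's argument always passes through its second case anyway.
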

\begin{proof}
As $i\not\in J(n)$, there exists $1\leq i'<i$ such that $ip^{j_n(i)}>i'p^{j_n(i')}$.
We abbreviate $j=j_n(i)$ and $j'=j_n(i')$, so $ip^j,i'p^{j'}\geq n$.

If $j>j'$, then
\[
(G^{(i)})^{p^j}\leq((G^{(i')})^{p^{j'}})^{p^{j-j'}}\leq (G_{(n,p)})^p.
\]

If $j\leq j'$, then the inequality $i>i'p^{j'-j}$ and (\ref{Shalev Formula}) give
\[
(G^{(i)})^{p^j}\leq
(G^{(i'p^{j'-j}+1)})^{p^j}\leq [G,G_{(n,p)}].
\qedhere
\]
\end{proof}

It follows from (\ref{inductive definition}) that for every $n$ the quotient $G_{(n,p)}/G_{(n+1,p)}$ is abelian of exponent dividing $p$.
Consider the graded $\dbF_p$-module
\[
\gr G=\bigoplus_{n\geq0} G_{(n,p)}/G_{(n+1,p)}.
\]
The commutator map and the $p$-power map induce on $\gr G$ the structure of a $p$-restricted Lie $\dbF_p$-algebra (see \cite{DixonDuSautoyMannSegal99}*{\S12.2}, \cite{Gartner11}*{Prop.\ 1.2.14}).

We now specialize to the case where $S$ is a free profinite group on the basis $X$, in the sense of \cite{FriedJarden08}*{\S17.4}.
It is the inverse limit of the free profinite groups on finite subsets of $X$  \cite{FriedJarden08}*{Lemma 17.4.9}, so in our following results one may assume whenever convenient that $X$ is actually finite, and use limit arguments for the general case.

By \cite{Gartner11}*{Th.\ 1.3.8}, there is a well-defined isomorphism $\gr S\xrightarrow{\sim}\widehat{L}(X)$ of graded restricted Lie algebras.
Specifically, the coset of $x\in X$ in $\gr_1 S=S/S_{(2,p)}$ maps to $x$.

Let $H$ be as before a fixed Hall set in $X^*$.
For every word $w\in H$ we associate an element $\tau_w\in S$ as in \cite{Efrat17}.
Thus $\tau_{(x)}=x$ for $x\in X$, and for a word $w\in H$ of length $i>1$ with standard factorization $w=uv$, where $u,v\in H$ (see \S\ref{section on Hall sets}), we set $\tau_w=[\tau_u,\tau_v]$.
Then $\tau_w\in S^{(i)}$.
Hence, if $ip^j\geq n$, then  $\tau_w^{p^j}\in (S^{(i)})^{p^j}\leq S_{(n,p)}$.

\begin{prop}
\label{basis for n-component of gr S}
Let $n\geq1$.
The cosets of the powers $\tau_w^{p^j}$, with $w\in H$ and $n=|w|p^j$, form an $\dbF_p$-linear basis of  $S_{(n,p)}/S_{(n+1,p)}$.
\end{prop}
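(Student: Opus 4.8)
The plan is to transport the statement through the isomorphism of graded restricted Lie $\dbF_p$-algebras $\phi\colon\gr S\xrightarrow{\sim}\widehat{L}(X)$ provided by \cite{Gartner11}*{Th.\ 1.3.8}, under which the coset of $x\in X$ in $\gr_1 S=S/S_{(2,p)}$ is sent to $x$. Since $\phi$ is graded, it restricts for each $n$ to an isomorphism $\gr_n S=S_{(n,p)}/S_{(n+1,p)}\xrightarrow{\sim}\widehat{L}(X)_n$. By Corollary \ref{basis for n component of Lres(X)}(b) the elements $\widehat{P}_{\widehat{L}(X)}(j,w)$, with $j\geq0$, $w\in H$ and $n=|w|p^j$, form an $\dbF_p$-basis of $\widehat{L}(X)_n$; so it suffices to prove the following compatibility: \emph{for $w\in H$, $j\geq0$ and $n=|w|p^j$, the power $\tau_w^{p^j}$ lies in $S_{(n,p)}$ and $\phi$ carries its coset in $\gr_n S$ to $\widehat{P}_{\widehat{L}(X)}(j,w)=P_{L(X)}(w)^{p^j}$.} The asserted basis of $\gr_n S$ is then precisely the $\phi$-preimage of the basis of $\widehat{L}(X)_n$.

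First I would treat the bracket case $j=0$, showing by induction on $|w|$ along the standard factorization that $\tau_w\in S_{(|w|,p)}$ and $\phi(\overline{\tau_w})=P_{L(X)}(w)$, where $\overline{\tau_w}\in\gr_{|w|}S$ denotes the relevant coset. The base case $|w|=1$ is the normalization $x\mapsto x$ of $\phi$. For $|w|>1$ write the standard factorization $w=uv$ with $u,v\in H$, so that $\tau_w=[\tau_u,\tau_v]$ and $P_{L(X)}(w)=[P_{L(X)}(u),P_{L(X)}(v)]$ by the parallel definitions of the two constructions. Here $\tau_u\in S^{(|u|)}\le S_{(|u|,p)}$ and likewise $\tau_v\in S_{(|v|,p)}$, so by the definition of the Lie bracket on $\gr S$ the coset of $[\tau_u,\tau_v]$ in $\gr_{|u|+|v|}S=\gr_{|w|}S$ is the bracket of the cosets $\overline{\tau_u}\in\gr_{|u|}S$ and $\overline{\tau_v}\in\gr_{|v|}S$. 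Applying $\phi$, using that it is a homomorphism of Lie algebras, and invoking the inductive hypothesis $\phi(\overline{\tau_u})=P_{L(X)}(u)$, $\phi(\overline{\tau_v})=P_{L(X)}(v)$, yields $\phi(\overline{\tau_w})=[P_{L(X)}(u),P_{L(X)}(v)]=P_{L(X)}(w)$, as required.

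It remains to incorporate the $p$-power. By the inductive definition (\ref{inductive definition}) (with $n$ replaced by $mp$) one has $(S_{(m,p)})^p\subseteq S_{(mp,p)}$, so the $p$-power map induces the restricted operation $(\cdot)^{[p]}\colon\gr_m S\to\gr_{mp}S$ belonging to the restricted Lie structure on $\gr S$. Iterating this gives $\tau_w^{p^j}\in S_{(|w|p^j,p)}=S_{(n,p)}$, and its coset in $\gr_n S$ equals $(\overline{\tau_w})^{[p]^j}$. Since $\phi$ commutes with the $[p]$-operation, the case $j=0$ now gives
\[
\phi\bigl(\overline{\tau_w^{p^j}}\bigr)=\phi\bigl((\overline{\tau_w})^{[p]^j}\bigr)=\bigl(\phi(\overline{\tau_w})\bigr)^{[p]^j}=P_{L(X)}(w)^{[p]^j}=P_{L(X)}(w)^{p^j},
\]
the last equality because inside $k\langle X\rangle$ the restricted operation on $\widehat{L}(X)$ is the genuine $p$-th power. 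This is exactly $\widehat{P}_{\widehat{L}(X)}(j,w)$, which establishes the compatibility and hence the proposition.

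The main obstacle is not any single computation but the careful bookkeeping of gradings: at each step one must verify that the group element in question sits in the correct term of the $p$-Zassenhaus filtration (so that its coset lives in the intended homogeneous component), and that the group commutator and the $p$-power genuinely induce the Lie bracket and the restricted $[p]$-operation in the \emph{correct} degrees $|u|+|v|$ and $mp$, respectively. Once this is tracked, the two constructions $w\mapsto\tau_w$ and $w\mapsto P_{L(X)}(w)$ are visibly the same recursion, and their identification under $\phi$ is forced by the normalization of $\phi$ on $X$.
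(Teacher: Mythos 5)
Your proposal is correct and follows essentially the same route as the paper: transport through G\"artner's isomorphism $\gr S\xrightarrow{\sim}\widehat{L}(X)$, identify the coset of $\tau_w$ with $P_{L(X)}(w)$ by induction on the standard factorization, deduce that $\tau_w^{p^j}$ maps to $\widehat{P}_{\widehat{L}(X)}(j,w)$, and conclude via Corollary \ref{basis for n component of Lres(X)}(b). The paper's proof is just a condensed version of your argument; your extra bookkeeping of gradings and filtration membership is the detail it leaves implicit.
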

\begin{proof}
We use the terminology of \S\ref{section on Lie algebras} with the ground field $k=\dbF_p$.
By induction on the structure of $w$, the isomorphism $\gr S\xrightarrow{\sim}\widehat{L}(X)$ of restricted Lie $\dbF_p$-algebras maps the coset of $\tau_w$ to $P_{L(X)}(w)$.
Therefore it maps the coset of $\tau_w^{p^j}$ to $\widehat{P}_{\widehat{L}(X)}(j,w)=P_{L(X)}(w)^{p^j}$ considered as polynomials in $\dbF_p\langle X\rangle$.
The assertion now follows from Corollary \ref{basis for n component of Lres(X)}(b).
\end{proof}

\begin{rem}
\rm
Vogel \cite{Vogel05}*{Ch.\ I, \S3} uses a specific Hall set $H$ to give $\dbF_p$-linear bases of $S_{(n,p)}/S_{(n+1,p)}$ for $n=2,3$, as well as generating sets for arbitrary $n$.
Namely for similarly defined basic commutators $c_w\in S^{(i)}$ of words $w\in H$ with $|w|=i$, the generating set consists of all $c_w^{p^j}$ with $n=ip^j$.
Furthermore, according to \cite{MinacRogelstadTan16}*{Cor.\ 3.12}  the set of all such powers forms a basis of  $S_{(n,p)}/S_{(n+1,p)}$, however the proof lacks details.
I thank J.\ Min\'a\v c for a correspondence on the latter reference.
\end{rem}

For a word $w\in H$ of length $1\leq i\leq n$ we abbreviate
\[
\sig_w=\tau_w^{p^{j_n(i)}}.
\]
Thus $\sig_w\in S_{(n,p)}$.

\begin{thm}
\label{generators}
The cosets of $\sig_w$, where  $w\in H$ has length $i\in J(n)$, generate $S_{(n,p)}/(S_{(n,p)})^p[S,S_{(n,p)}]$.
\end{thm}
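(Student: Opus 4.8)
The plan is to filter the elementary abelian pro-$p$ group $S_{(n,p)}/N$, where $N=(S_{(n,p)})^p[S,S_{(n,p)}]$, by the images of the lower terms of the Zassenhaus filtration, and then to read off the generators component by component from the homogeneous pieces $\gr_m S$ using Proposition \ref{basis for n-component of gr S}. (The case $n=1$ is immediate, since then $\sig_x=x$ for $x\in X$ and $S_{(1,p)}/N=S/S^p[S,S]$ is generated by the image of $X$; so I assume $n\geq2$.) First I would record two containments. From the inductive description (\ref{inductive definition}) one has $(S_{(n,p)})^p\leq(S_{(\lceil(n+1)/p\rceil,p)})^p\leq S_{(n+1,p)}$ and $[S,S_{(n,p)}]\leq S_{(n+1,p)}$, whence $N\leq S_{(n+1,p)}$; in particular $S_{(n,p)}/N$ is abelian of exponent dividing $p$, i.e.\ an $\dbF_p$-space. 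From (\ref{np}) one has $S_{(np,p)}\leq N$. Writing $V_m$ for the image of $S_{(m,p)}$ in $S_{(n,p)}/N$, this yields a finite descending filtration
\[
S_{(n,p)}/N=V_n\supseteq V_{n+1}\supseteq\cdots\supseteq V_{np}=0,
\]
so by a downward induction on $m$ it suffices to prove that for each $n\leq m<np$ the images of the $\sig_w$ span the successive quotient $V_m/V_{m+1}$.

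Next I would identify $V_m/V_{m+1}$. The natural map $\gr_m S=S_{(m,p)}/S_{(m+1,p)}\to S_{(m,p)}N/S_{(m+1,p)}N=V_m/V_{m+1}$ is surjective, and (by the modular law, since $S_{(m+1,p)}\leq S_{(m,p)}$) its kernel is the image $W_m$ of $S_{(m,p)}\cap N$. By Proposition \ref{basis for n-component of gr S}, via the isomorphism $\gr S\xrightarrow{\sim}\widehat{L}(X)$, the cosets of $\tau_{w'}^{p^{j'}}$ with $w'\in H$, $j'\geq0$ and $|w'|p^{j'}=m$ form an $\dbF_p$-basis of $\gr_m S$. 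Each generator $\sig_w=\tau_w^{p^{j_n(|w|)}}$ with $|w|p^{j_n(|w|)}=m$ maps to one such basis vector; note $|w|p^{j_n(|w|)}<np$, so every generator does sit inside this filtration range. It therefore remains to show that each basis vector is either the image of a generator or lies in $W_m$.

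The heart of the argument is this dichotomy, and I expect it to be the main obstacle. Fix a basis index $(w',j')$ and set $i'=|w'|$, so $i'p^{j'}=m\geq n$ forces $j'\geq j_n(i')$. If $i'\in J(n)$ and $j'=j_n(i')$, then $\tau_{w'}^{p^{j'}}=\sig_{w'}$ is one of our generators. If instead $j'>j_n(i')$, then $\tau_{w'}^{p^{j_n(i')}}\in(S^{(i')})^{p^{j_n(i')}}\leq S_{(n,p)}$ (because $i'p^{j_n(i')}\geq n$), so the further power $\tau_{w'}^{p^{j'}}=(\tau_{w'}^{p^{j'-1}})^p$ lies in $(S_{(n,p)})^p\leq N$. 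Finally, if $i'\notin J(n)$ and $j'=j_n(i')$, then Proposition \ref{minimal vanish} gives $\tau_{w'}^{p^{j'}}\in(S^{(i')})^{p^{j_n(i')}}\leq N$. Since $j'\geq j_n(i')$ always, these cases are exhaustive, and in the latter two the basis vector lies in $W_m$ and so dies in $V_m/V_{m+1}$. Hence the images of the generators $\sig_w$ with $|w|p^{j_n(|w|)}=m$ span $V_m/V_{m+1}$, which completes the induction.

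The genuine content is the combinatorial bookkeeping around $J(n)$ and $j_n$ in that last paragraph, together with Proposition \ref{minimal vanish}, which exactly account for the two ways a basis vector can be absorbed into $N$. The profinite technicalities (closedness of the products $S_{(m+1,p)}N$ and the validity of the graded computation) are handled, as elsewhere in the paper, by first reducing to finite $X$ and then passing to the limit by a density argument.
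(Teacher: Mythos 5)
Your proposal is correct and follows essentially the same route as the paper: both arguments descend through the filtration $S_{(m,p)}$ for $n\leq m<np$ using Proposition \ref{basis for n-component of gr S} on each graded piece, invoke (\ref{np}) to truncate at $m=np$, absorb the powers with $j>j_n(i)$ into $(S_{(n,p)})^p$, and discard the indices $i\notin J(n)$ via Proposition \ref{minimal vanish}. The only difference is presentational — you phrase the induction as a downward induction on the quotients $V_m/V_{m+1}$, whereas the paper runs an upward induction on $r$ to show $S_{(n,p)}/S_{(n+r,p)}$ is generated by the relevant cosets — and your case analysis is a correct, slightly more explicit unpacking of the paper's final two sentences.
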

\begin{proof}
Proposition \ref{basis for n-component of gr S} implies by induction on $r\geq1$, that $S_{(n,p)}/S_{(n+r,p)}$ is generated by the cosets of $\tau_w^{p^j}$,
where $w\in H$ has length $i$, and $n\leq ip^j<n+r$.
We apply this for $n+r=np$.
By (\ref{np}), $S_{(np,p)}\leq (S_{(n,p)})^p[S,S_{(n,p)}]$, and we deduce that $S_{(n,p)}/(S_{(n,p)})^p[S,S_{(n,p)}]$ is generated by the cosets of $\tau_w^{p^j}$,
where $w\in H$ has length $i$ and $n\leq ip^j<np$.
Moreover, it suffices to take such powers with $j=j_n(i)$, since otherwise $\tau_w^{p^j}\in (S_{(n,p)})^p$.
Finally, by Proposition \ref{minimal vanish}, if $i\not\in J(n)$, then the coset of $\sig_w=\tau_w^{p^{j_n(i)}}$ is trivial.
We are therefore left with with the generators $\sig_w$ as in the assertion.
\end{proof}

\section{The fundamental matrix}
\label{section on the fundamental matrix}
For a profinite ring $R$, let $R\langle\langle X\rangle\rangle^\times$ be the  group of invertible elements in $R\langle\langle X\rangle\rangle$ (see \S\ref{section on Lie algebras}).
As before, let $S$ be the free profinite group over the basis $X$.
The \textsl{continuous Magnus homomorphism}
\[
\Lam=\Lam_R\colon S\to R\langle\langle X\rangle\rangle^\times
\]
is defined on the (profinite) generators $x\in X$ of $S$ by $\Lam(x)=1+x$;
See \cite{Efrat14}*{\S5} for details, and note that $1+x$ is invertible by the geometric progression formula.
For an arbitrary $\sig\in S$ we write
\[
\Lam(\sig)=\sum_{w\in X^*}\eps_{w,R}(\sig)w,
\]
with $\eps_{w,R}(\sig)\in R$.
The map $\eps_{w,R}\colon S\to R$ is continuous, and $\eps_{\emptyset,R}(\sig)=1$ for every $\sig$ (where $\emptyset$ denotes the empty word).

Let $\dbU_i(R)$ be the profinite group of all unitriangular $(i+1)\times(i+1)$-matrices over $R$.
Given a word $w=(x_1\cdots x_i)\in X^*$ of length $i$, we define a continuous map $\rho_w\colon S\to\dbU_i(R)$ by
\[
\rho_w(\sig)=(\eps_{(x_kx_{k+1}\cdots x_{l-1}),R}(\sig))_{1\leq k\leq l\leq i+1}.
\]
The fact that $\Lam$ is a homomorphism implies that $\rho_w$ is  a homomorphism of profinite groups \cite{Efrat14}*{Lemma 7.5}.
We call it the \textsl{Magnus representation} of $S$ corresponding to $w$.

The subgroup $S^{(n)}$ of $S$ is characterized in terms of the Magnus map as the set of all $\sig\in S$ such that $\eps_{w,\dbZ_p}(\sig)=0$ for every word $w$ of length $1\leq i<n$ \cite{Efrat17}*{Prop.\ 4.1(a)}.
The following result gives similar restrictions on the Magnus coefficients of elements of $S_{(n,p)}$.
In the discrete case, it was proved in \cite{ChapmanEfrat16}*{Example 4.6}, where it was further shown that these restrictions in fact characterize $S_{(n,p)}$.
While it is possible to derive the proposition from the discrete case using a density argument, we provide a direct proof.

\begin{prop}
\label{Zassenhaus filtration via Magnus coefficients}
If $\sig\in S_{(n,p)}$, then $\eps_{w,\dbZ_p}(\sig)\in p^{j_n(i)}\dbZ_p$ for every  word $w\in X^*$ of length $i\geq1$.
\end{prop}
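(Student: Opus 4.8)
The plan is to prove the containment $\eps_{w,\dbZ_p}(\sig)\in p^{j_n(i)}\dbZ_p$ by exploiting the product decomposition $S_{(n,p)}=\prod_{i'=1}^n(S^{(i')})^{p^{j_n(i')}}$ together with the multiplicativity of the Magnus homomorphism $\Lam$. Since the coefficient maps $\eps_{w,\dbZ_p}$ are continuous and the $p^{j_n(i)}\dbZ_p$ are closed, it suffices to verify the claim on the dense subgroup generated by the finite products $\prod_{i'}\sig_{i'}$ with $\sig_{i'}\in(S^{(i')})^{p^{j_n(i')}}$; a limit argument then handles the general $\sig$. The heart of the matter is therefore to understand how $\eps_{w,\dbZ_p}$ behaves on such a product.

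First I would reduce to the generators. An element of $(S^{(i')})^{p^{j_n(i')}}$ is, up to taking limits, a product of $p^{j_n(i')}$-th powers of elements of $S^{(i')}$. The two facts I would draw on are: (1) by \cite{Efrat17}*{Prop.\ 4.1(a)}, for $\tau\in S^{(i')}$ one has $\eps_{u,\dbZ_p}(\tau)=0$ whenever $1\leq|u|<i'$, so $\Lam(\tau)=1+(\text{terms of degree}\geq i')$; and (2) for a $p^{j}$-th power, the binomial expansion of $\Lam(\tau)^{p^{j}}=(1+N)^{p^{j}}$, where $N$ collects the degree-$\geq i'$ terms, produces coefficients that are divisible by suitable powers of $p$. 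Concretely, the coefficient of a word $w$ of length $i$ in $(1+N)^{p^{j}}$ is a $\dbZ_p$-combination of products of the $\eps$-coefficients of $\tau$, and I must track the $p$-adic valuation of the binomial coefficients $\binom{p^{j}}{m}$ that arise when $w$ is a concatenation of $m$ words each of length $\geq i'$.

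The key numerical estimate is this: a word $w$ of length $i$ appearing in $(1+N)^{p^j}=\sum_m \binom{p^j}{m}N^m$ (with $N$ supported in degrees $\geq i'$) forces $m\leq i/i'$, and since $ip^{j_n(i)}$ and $i'p^{j}$ are both $\geq n$ by construction, I would show $v_p\bigl(\binom{p^j}{m}\bigr)+0 \geq j_n(i)$ by combining $v_p(\binom{p^j}{m})=j-v_p(m)$ with the bound $m\leq i/i'$ and the inequality $i'p^{j}\geq n$. The point is that a large $m$ (which lowers the valuation of the binomial coefficient via a large $v_p(m)$) is only possible when $i/i'$ is large, i.e.\ when $i$ is large relative to $i'$, and in that regime $j_n(i)$ is correspondingly smaller; the defining property $i'p^{j_n(i')}\geq n$ and the monotonicity built into $j_n$ make the two effects balance. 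For a general product $\prod_{i'}\sig_{i'}$ one multiplies the corresponding power series, and the coefficient of $w$ becomes a sum over factorizations $w=w_1\cdots w_s$ with each $w_t$ contributed by the factor of index $i'_t$; the length additivity $\sum|w_t|=i$ together with the per-factor estimate yields the total valuation bound $j_n(i)$ after invoking the superadditivity-type inequality for the function $i\mapsto j_n(i)$.

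The main obstacle I anticipate is precisely this last combinatorial valuation bookkeeping: one must verify that the $p$-adic valuation of the $w$-coefficient, assembled from binomial coefficients $\binom{p^{j_n(i'_t)}}{m_t}$ across a factorization, is always at least $j_n(i)$. This is not a single clean inequality but a comparison that uses the arithmetic of the function $j_n$ — in effect, a statement that $j_n$ is subadditive in an appropriate sense relative to the length-$i$ data — and it is exactly the sort of estimate the paper has prepared for by establishing Lemma \ref{conditions for J(n)} and the monotonicity of $i\mapsto ip^{j_n(i)}$. I would isolate this as the technical core, prove the single-factor power estimate first (the case $\sig=\tau^{p^j}$ with $\tau\in S^{(i')}$), and then extend to products by induction on the number of factors, each step multiplying power series and reapplying the estimate to each factorization term. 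The rest — reduction to generators, density, and continuity — is routine.
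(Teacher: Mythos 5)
Your proposal is correct and follows essentially the same route as the paper: reduce to the generators $\tau^{p^{j_n(i')}}$ with $\tau\in S^{(i')}$, use \cite{Efrat17}*{Prop.\ 4.1(a)} to see that $\Lam_{\dbZ_p}(\tau)$ has no terms of degree $<i'$, and bound the $p$-adic valuation of the binomial coefficients arising in $(1+N)^{p^{j_n(i')}}$. The only difference is packaging: the paper introduces the ideal $I=\sum_{i\geq1}\sum_{|w|=i}p^{j_n(i)}\dbZ_p w$ and observes that $1+I$ is a closed subgroup of $\dbZ_p\langle\langle X\rangle\rangle^\times$, so the multi-factor bookkeeping you isolate as the ``technical core'' becomes automatic --- only the monotonicity $j_n(i)\geq j_n(i+k)$ (which is exactly what makes $I$ an ideal) is needed there, not any superadditivity of $j_n$.
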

\begin{proof}
Consider the subset
\[
I=\sum_{i\geq1}\sum_{|w|=i}p^{j_n(i)}\dbZ_pw
=\sum_{1\leq i\leq n}\sum_{|w|=i}p^{j_n(i)}\dbZ_pw+\sum_{i>n}\sum_{|w|=i}\dbZ_pw
\]
of $\dbZ_p\langle\langle X\rangle\rangle$.
It is an ideal in $\dbZ_p\langle\langle X\rangle\rangle$, and therefore $1+I$ is closed under multiplication.
Moreover, the identity $\alp\inv=1-\alp\inv(\alp-1)$ shows that $1+I$ is in fact a subgroup of $\dbZ_p\langle\langle X\rangle\rangle^\times$.

As $S_{(n,p)}=\prod_{i=1}^n(S^{(i)})^{p^{j_n(i)}}$, it therefore suffices to show that $\Lam_{\dbZ_p}(\tau^{p^{j_n(i)}})\in1+I$ for every $\tau\in S^{(i)}$ with $1\leq i\leq n$.
We abbreviate $j=j_n(i)$.
Then $\Lam_{\dbZ_p}(\tau)=1+\sum_{|w|\geq i}\eps_{w,\dbZ_p}(\tau)w$, by  \cite{Efrat17}*{Prop.\ 4.1(a)}.
For every $1\leq  l\leq p^j$ such that $il\leq n$ one has $p^{j_n(il)}|\binom{p^j}l$ \cite{ChapmanEfrat16}*{Example 3.9}.
Hence
\[
\begin{split}
\Lam_{\dbZ_p}(\tau^{p^j})&=\sum_{0\leq l\leq p^j}\binom{p^j}l\Bigl(\sum_{|w|\geq i}\eps_{w,\dbZ_p}(\tau)w\Bigr)^l
\subseteq 1+\sum_{1\leq l\leq p^j}\binom{p^j}l\bigl(\sum_{|w|\geq i}\dbZ_pw\bigr)^l\\
&\subseteq 1+\sum_{1\leq l\leq p^j, il\leq n}p^{j_n(il)}\bigl(\sum_{|w|\geq i}\dbZ_pw\bigr)^l+\sum_{1\leq l\leq p^j, il>n}\bigl(\sum_{|w|\geq i}\dbZ_pw\bigr)^l \\
&\subseteq1+I,
\end{split}
\]
as desired.
\end{proof}

As before, let $H$ be a Hall set in $X^*$.

\begin{cor}
\label{Magnus coef of tau to p}
Let $w,w'$ be nonempty words in $X^*$ of lengths $1\leq i, i'\leq n$, respectively, with $w'\in H$.
Then  $\eps_{w,\dbZ_p}(\sig_{w'})\in p^{j_n(i)}\dbZ_p$.
\end{cor}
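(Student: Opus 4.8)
The plan is to reduce the statement directly to Proposition \ref{Zassenhaus filtration via Magnus coefficients}. First I would verify that $\sig_{w'}$ lies in $S_{(n,p)}$. By construction $\tau_{w'}\in S^{(i')}$, so $\sig_{w'}=\tau_{w'}^{p^{j_n(i')}}$ belongs to $(S^{(i')})^{p^{j_n(i')}}$; and since $i'p^{j_n(i')}\geq n$ by the very definition of $j_n(i')$, this power subgroup is contained in $S_{(n,p)}=\prod_{i=1}^n(S^{(i)})^{p^{j_n(i)}}$. Then applying Proposition \ref{Zassenhaus filtration via Magnus coefficients} to the element $\sig_{w'}\in S_{(n,p)}$ and to the given word $w$ of length $i$ yields $\eps_{w,\dbZ_p}(\sig_{w'})\in p^{j_n(i)}\dbZ_p$, which is exactly the claim. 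The hypothesis $w'\in H$ plays no role beyond guaranteeing that $\sig_{w'}$ is defined, and the bound $i'\leq n$ is precisely what makes $j_n(i')$ well-defined.

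There is essentially no obstacle remaining at this stage: all the genuine content has already been absorbed into the preceding proposition, whose proof carries out the substantial work — expanding $\Lam_{\dbZ_p}(\tau^{p^j})$ via the binomial theorem over the ideal $I$, and invoking the $p$-adic divisibility $p^{j_n(il)}\mid\binom{p^j}{l}$ from \cite{ChapmanEfrat16}*{Example 3.9}. The corollary is simply the specialization of that proposition to the particular elements $\sig_{w'}$, and the only point worth making explicit is the membership $\sig_{w'}\in S_{(n,p)}$, which is immediate from the definitions in \S\ref{section on the p-Zassenhaus filtration}.
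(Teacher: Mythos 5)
Your proof is correct and is exactly the argument the paper intends: the membership $\sig_{w'}=\tau_{w'}^{p^{j_n(i')}}\in(S^{(i')})^{p^{j_n(i')}}\leq S_{(n,p)}$ is already recorded at the end of \S\ref{section on the p-Zassenhaus filtration}, and the corollary is then an immediate specialization of Proposition \ref{Zassenhaus filtration via Magnus coefficients}. No gap and no divergence from the paper's route.
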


For an integer $1\leq i\leq n$ let
\[
\pi_i\colon \dbZ_p\to\dbZ/p^{j_n(i)+1}
\]
be the natural epimorphism.
For  words $w,w'$ of lengths $i$, $i'$, respectively, with $w'\in H$, we define
\[
\langle w,w'\rangle_n=\pi_i(\eps_{w,\dbZ_p}(\sig_{w'})).
\]
By Corollary  \ref{Magnus coef of tau to p}, $\langle w,w'\rangle_n\in p^{j_n(i)}\dbZ_p/p^{j_n(i)+1}\dbZ_p$.
We identify the latter group with $\dbZ/p$, and thus view $\langle w,w'\rangle_n$ as an element of $\dbZ/p$.

Consider the (possibly infinite) \textsl{transposed} matrix
 \[
\Bigl[\langle w,w'\rangle_n\Bigr]_{w,w'}^T
 \]
over $\dbZ/p$, where $w,w'$ range over all words in $H$ of lengths in $J(n)$, and indexed with respect to the total order $\preceq$ on $X^*$ defined in \S\ref{section on Hall sets}.
We call it the \textsl{fundamental matrix of level $n$ of $H$}.

We now focus on the Hall set of Lyndon words  (see the Introduction).
We record the following  fundamental \textsl{triangularity property} of $H$
\cite{Reutenauer93}*{Th.\ 5.1}:
For every Lyndon word $w\in X^*$ one has
\begin{equation}
\label{triangularity}
\Lam_{\dbZ_p}(\tau_w)=1+w+\hbox{ a combination of words strictly larger than $w$ in $\preceq$}.
 \end{equation}

\begin{prop}
\label{unitriangular fundamental matrix}
Let $H$ be the Hall set of all Lyndon words in $X^*$.
The fundamental matrix of $H$ of level $n$ is unitriangular (i.e., unipotent and upper-triangular).
\end{prop}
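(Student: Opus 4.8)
The plan is to pin down the $\preceq$-leading term of the Magnus expansion of each $\sig_w$ and then feed this into the definition of $\langle\,\cdot\,,\cdot\,\rangle_n$. Fix a Lyndon word $w$ of length $i\in J(n)$ and abbreviate $j=j_n(i)$. Writing $\Lam_{\dbZ_p}(\tau_w)=1+u$, the triangularity property (\ref{triangularity}) tells us that $u$ is a $\dbZ_p$-combination of words that are all $\succeq w$, the unique $\preceq$-minimal one being $w$ itself and occurring with coefficient $1$; in particular every word in the support of $u$ has length $\geq i$.

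Next I would raise this to the $p^j$-th power, mirroring the binomial computation in the proof of Proposition \ref{Zassenhaus filtration via Magnus coefficients}. Since $\Lam_{\dbZ_p}(\sig_w)=\Lam_{\dbZ_p}(\tau_w)^{p^j}=(1+u)^{p^j}=1+p^ju+\sum_{l=2}^{p^j}\binom{p^j}{l}u^l$, I examine the two kinds of contribution separately. The linear term $p^ju$ equals $p^jw$ plus $p^j$ times a combination of words strictly larger than $w$. For $l\geq2$ every word occurring in $u^l$ is a concatenation of $l$ words each of length $\geq i$, hence has length $\geq li>i$ and is therefore $\succ w$. Consequently $\Lam_{\dbZ_p}(\sig_w)=1+p^jw+(\text{a combination of words }\succ w)$, so that $\eps_{w,\dbZ_p}(\sig_w)=p^j$ while $\eps_{w',\dbZ_p}(\sig_w)=0$ for every word $w'\prec w$.

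It then remains to read off the matrix entries. By definition the $(w,w')$-entry of the transposed matrix is $\langle w',w\rangle_n=\pi_{i'}(\eps_{w',\dbZ_p}(\sig_w))$, where $i'=|w'|$ and $w,w'$ run over the Lyndon words of lengths in $J(n)$ ordered by $\preceq$. Taking $w'=w$ gives $\pi_i(p^j)$, the class of $p^{j}$ in $p^{j}\dbZ_p/p^{j+1}\dbZ_p$; under the identification of this group with $\dbZ/p$ it is $1$, so all diagonal entries equal $1$. If instead $w\succ w'$, i.e.\ $w'\prec w$, the expansion above yields $\eps_{w',\dbZ_p}(\sig_w)=0$, whence $\langle w',w\rangle_n=0$. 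Thus the transposed matrix carries $1$'s on the diagonal and vanishes strictly below it, which is precisely the asserted unitriangularity; correspondingly the untransposed matrix is lower unitriangular, which is why the transpose is taken.

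I expect the only delicate points to be bookkeeping rather than substance: one must be careful that it is the transpose, not the original matrix, that comes out upper-triangular, and that the identification $p^{j_n(i)}\dbZ_p/p^{j_n(i)+1}\dbZ_p\cong\dbZ/p$ sends the diagonal value $p^{j_n(i)}$ to $1$ and not to some other unit. The algebraic heart of the argument, namely that passing from $1+u$ to $(1+u)^{p^j}$ leaves the $\preceq$-leading term unchanged apart from the scalar $p^j$, is immediate once one observes that the higher powers $u^l$ strictly increase word length and that $u$ is supported on words $\succeq w$.
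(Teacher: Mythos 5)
Your proposal is correct and follows essentially the same route as the paper: both rest on the triangularity property (\ref{triangularity}) of the Magnus expansion of $\tau_w$, raise it to the $p^{j_n(i)}$-th power to get $\Lam_{\dbZ_p}(\sig_w)=1+p^{j_n(i)}w+(\text{words}\succ w)$, and read off that the untransposed matrix is lower unitriangular. You merely spell out in more detail the step the paper leaves implicit, namely that the higher binomial terms $u^l$ with $l\geq2$ are supported on words of length $>i$ and hence are $\succ w$.
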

\begin{proof}
Let $w$ be a Lyndon word of length $i\leq n$.
By (\ref{triangularity}),
\[
\Lam_{\dbZ_p}(\sig_w)=(1+w+\cdots)^{p^{j_n(i)}}=1+p^{j_n(i)}w+\cdots,
\]
where the remaining terms are multiples of of words strictly larger than $w$ in $\preceq$.
Therefore $\langle w,w\rangle_n=\pi_i(p^{j_n(i)})=1$ in $\dbZ/p$.

Furthermore, for Lyndon words $w\prec w'$ we get $\eps_{w,\dbZ_p}(\sig_{w'})=0$, whence $\langle w,w'\rangle_n=0$ (note that the empty word is not Lyndon).

Consequently, the matrix  $[\langle w,w'\rangle_n]_{w,w'}$ is unipotent \textsl{lower}-triangular, and therefore its transpose is unitriangular.
\end{proof}

\begin{exam}
\label{fundamental matrix for n=2}
\rm
Suppose that $n=2$.
Then $J(n)=\{1,2\}$.

The Lyndon words of length $\leq 2$ are the words $w=(x)$ and $w=(xy)$ with  $x,y\in X$, $x<y$.
Then  $\sig_w$ is $\tau_w^{p^{j_2(1)}}=x^p$ and $\tau_w^{p^{j_2(2)}}=[x,y]$, respectively.
In \cite{Efrat17}*{\S10} it is shown that the value of  $\langle w,w'\rangle$, where $w,w'$ are Lyndon words of lengths $\leq2$, respectively, is $1$ if $w=w'$, and is $0$ otherwise.
Thus the fundamental matrix of level $2$ for the Lyndon words is the identity matrix.
\end{exam}

\begin{exam}
\label{fundamental matrix for n=3}
\rm
Suppose that $n=3$.
Then $J(n)=\{1,3\}$ for $p\geq3$, and $J(3)=\{1,2,3\}$ for $p=2$.

The Lyndon words $w$ of length $3$ are of the forms
\[
(xxy), (xyy), (xyz), (xzy),
\]
where $x,y,z\in X$ and $x<y<z$.
For these words we have
\[
\sig_{(xxy)}=[x,[x,y]], \ \sig_{(xyy)}=[[x,y],y], \  \sig_{(xyz)}=[x,[y,z]], \  \sig_{(xzy)}=[[x,z],y],
\]
respectively.
We recall that $\langle w,w\rangle_3=1$ for every $w$, and $\langle w,w'\rangle_3=0$ when $w\prec w'$.
It remains to compute $\langle w,w'\rangle_3$ when $w'\prec w$.

If $|w|,|w'|\leq2$, then by Example \ref{fundamental matrix for n=2}, $\langle w,w'\rangle_3=0$.
We may therefore assume that $|w'|\leq|w|=3$.

If $w$ contains a letter which does not appear in $w'$, then $\eps_{w,\dbZ_p}(\sig_{w'})=0$, whence $\langle w,w'\rangle_3=0$.
Thus we may assume that every letter in $w$ appears in $w'$.

When $w=(xyy)$ and $w'=(xxy)$, where $x<y$,  the proof of \cite{Efrat17}*{Prop.\ 11.2} gives
\[
\langle w,w'\rangle_3=\eps_{(xyy)}([x,[x,y]])=0.
\]
Similarly, when $w=(xzy)$ and $w'=(xyz)$, where $x<y<z$, the proof of \cite{Efrat17}*{Prop.\ 11.2} gives
\[
\langle w,w'\rangle_3=\eps_{(xzy),\dbZ_p}([x,[y,z]])=-1.
\]

This covers all possible cases when $p\geq3$.
 When $p=2$ we also need to consider Lyndon words $w'=(xy)$ of length $2$, where $x<y$.
Then $w=(xxy)$ or $w=(xyy)$.
An explicit computation gives
\[
\Lam_{\dbZ_2}([x,y])=1+xy-yx+xyx-yxy-x^2y+y^2x+\cdots,
\]
where the remaining terms are of degree $\geq4$.
The square of this series has no terms $(xxy)$ and $(xyy)$,
so $\eps_{(xxy),\dbZ_2}([x,y]^2)=\eps_{(xyy),\dbZ_2}([x,y]^2)=0$.
Therefore  $\langle (xxy),(xy)\rangle_3=\langle (xyy),(xy)\rangle_3=0$.

Altogether, we have shown that
\[
\langle w,w'\rangle_3=
\begin{cases}
\ \ 1,& \hbox{if } w=w',\\
-1,&  \hbox{if } w=(xzy),\  w'=(xyz) \hbox{ where } x,y,z\in X, \ x<y<z,\\
\ \ 0,& \hbox{otherwise}.
\end{cases}
\]
In particular, the fundamental matrix need not be the identity matrix.
\end{exam}

\section{Unitriangular matrices}
\label{section on unitriangular matrices}
Let $i\geq1$ and $j\geq0$ be integers and consider the ring $R=\dbZ/p^{j+1}$.
In this section we study the $p$-Zassenhaus filtration of the group $\dbU=\dbU_i(R)$ of all unitriangular  $(i+1)\times(i+1)$-matrices over $R$, and in particular characterize the values of $i,j$ for which $\dbU_{(n,p)}\isom\dbZ/p$ (see \S\ref{section on the fundamental matrix} for the notation).

We denote the unit matrix in $\dbU$ by $I$, and write $E_{1,i+1}$ for the matrix which is $1$ at entry $(1,i+1)$, and is $0$ elsewhere.
For $i'\geq1$, the subgroup $\dbU^{(i')}$ of $\dbU$ consists of all matrices in $\dbU$ which are zero on the first $i'-1$ diagonals above the main diagonal \cite{BierHolubowski15}*{Th.\ 1.5(i)}.

We record the following fact about binomial coefficients:

\begin{lem}
\label{binomial coefficients}
Let $t,j'$ be positive integers such that $1\leq t\leq p^{j'}$.
The following conditions are equivalent:
\begin{enumerate}
\item[(a)]
$p^j|\binom{p^{j'}}l$, $l=1,2\nek t$;
\item[(b)]
$p^j|\binom{p^{j'}}l$ for $l=p^{\lfloor\log_pt\rfloor}$;
\item[(c)]
$j'\geq j+\lfloor\log_pt\rfloor$.
\end{enumerate}
\end{lem}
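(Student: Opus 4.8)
The plan is to reduce all three conditions to a single inequality by computing the exact $p$-adic valuation of $\binom{p^{j'}}l$. The key formula I would establish first is that for $1\le l\le p^{j'}$ one has $\Val_p\binom{p^{j'}}l=j'-\Val_p(l)$, where $\Val_p$ denotes the $p$-adic valuation. To prove this, I would use the elementary identity $l\binom{p^{j'}}l=p^{j'}\binom{p^{j'}-1}{l-1}$ together with the observation that $\binom{p^{j'}-1}{l-1}$ is prime to $p$. The latter follows from Lucas's theorem: the base-$p$ digits of $p^{j'}-1=\sum_{k=0}^{j'-1}(p-1)p^k$ are all equal to $p-1$, so every binomial coefficient $\binom{p^{j'}-1}m$ with $0\le m\le p^{j'}-1$ is a product of terms $\binom{p-1}{m_k}$, each nonzero modulo $p$. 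Taking valuations in the identity then yields $\Val_p(l)+\Val_p\binom{p^{j'}}l=j'$, as claimed.

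With this formula in hand, each condition becomes a statement about $\Val_p(l)$. Condition (a) says $\Val_p\binom{p^{j'}}l\ge j$ for all $1\le l\le t$, which by the formula is equivalent to $\Val_p(l)\le j'-j$ for all such $l$. Since $\Val_p(l)$, for $l$ in the range $1\le l\le t$, attains its maximum value $\lfloor\log_p t\rfloor$ exactly at $l=p^{\lfloor\log_p t\rfloor}$ (the largest power of $p$ not exceeding $t$), condition (a) is equivalent to $\lfloor\log_p t\rfloor\le j'-j$, which is precisely (c). Likewise, condition (b) concerns the single value $l=p^{\lfloor\log_p t\rfloor}$, for which $\Val_p(l)=\lfloor\log_p t\rfloor$; the formula turns (b) into $j'-\lfloor\log_p t\rfloor\ge j$, again identical to (c). This closes the cycle of equivalences through (c).

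The main obstacle, and really the only nontrivial point, is the valuation formula, and specifically the claim that $\binom{p^{j'}-1}{l-1}$ is a $p$-adic unit; everything after that is bookkeeping with the floor function and the elementary observation that the maximal power of $p$ in the range $\{1,\dots,t\}$ is $p^{\lfloor\log_p t\rfloor}$. I would take care to record that the hypothesis $1\le t\le p^{j'}$ guarantees $l$ stays within $1\le l\le p^{j'}$ so that the formula applies, and that $p^{\lfloor\log_p t\rfloor}\le t$ indeed lies in this admissible range, so that the value of $l$ singled out in (b) is genuinely among those of (a).
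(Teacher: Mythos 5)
Your proposal is correct and complete. Note that the paper does not actually prove this lemma in situ: it dismisses (a)$\Rightarrow$(b) as trivial and refers the remaining implications to \cite{Efrat20}*{Prop.\ 2.2(c)}, so your argument supplies a self-contained proof where the paper gives a citation. The route you take is the natural one and is surely what underlies the cited result: the exact valuation $\Val_p\binom{p^{j'}}{l}=j'-\Val_p(l)$, obtained from $l\binom{p^{j'}}{l}=p^{j'}\binom{p^{j'}-1}{l-1}$ together with the fact that $\binom{p^{j'}-1}{l-1}$ is a $p$-adic unit (Lucas, since every base-$p$ digit of $p^{j'}-1$ equals $p-1$), reduces all three conditions to the single inequality $\max_{1\leq l\leq t}\Val_p(l)=\lfloor\log_p t\rfloor\leq j'-j$. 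Your bookkeeping is also careful at the right places: you check that $p^{\lfloor\log_p t\rfloor}\leq t\leq p^{j'}$, so the value of $l$ in (b) lies in the admissible range, and the identification of the maximum of $\Val_p$ on $\{1,\dots,t\}$ is correct. The only gain the reader gets from your version over the paper's is self-containedness; there is no gap.
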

\begin{proof}
(a)$\Rightarrow$(b) is trivial.
For (a)$\Leftrightarrow$(c) and (b)$\Rightarrow$(c) see \cite{Efrat20}*{Prop.\ 2.2(c)} (and its proof).
\end{proof}

\begin{prop}
\label{powers of unitriangular group}
Let $1\leq i'\leq i$ and $j'\geq0$.
\begin{enumerate}
\item[(a)]
One has $(\dbU^{(i')})^{p^{j'}}=\{I\}$ if and only if $j'\geq j+1+\lfloor\log_p(i/i')\rfloor$.
\item[(b)]
One has  $(\dbU^{(i')})^{p^{j'}}=I+p^j\dbZ E_{1,i+1}$ if and only if $j'=j+\log_p(i/i')$ (in particular, $i/i'$ is a $p$-power).
\item[(c)]
One has $(\dbU^{(i')})^{p^{j'}}\leq I+p^j\dbZ E_{1,i+1}$ if and only if $j'\geq j+\log_p (i/i')$.
\end{enumerate}
\end{prop}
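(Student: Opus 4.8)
The plan is to pass to additive coordinates. Writing $A=I+N$ for $A\in\dbU^{(i')}$, the matrix $N$ is supported on the superdiagonals $\geq i'$ \cite{BierHolubowski15}*{Th.\ 1.5(i)}, so $N^l$ is supported on the superdiagonals $\geq li'$ and vanishes once $li'>i$. Setting $m=\lfloor i/i'\rfloor$ and using that $I$ and $N$ commute, the binomial theorem gives, in $\dbU=\dbU_i(\dbZ/p^{j+1})$,
\[
A^{p^{j'}}=I+\sum_{l=1}^{m}\binom{p^{j'}}{l}N^{l}.
\]
Thus the whole question reduces to understanding, modulo $p^{j+1}$, the binomial coefficients $\binom{p^{j'}}{l}$ for $1\leq l\leq m$. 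Note that the position $(1,1+li')$ is the corner entry $(1,i+1)$ precisely when $i'\mid i$ and $l=m$; otherwise it lies strictly below the $i$-th superdiagonal.

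First I would isolate the individual coefficients. Writing $E_{a,b}$ for the matrix with $1$ in position $(a,b)$ and $0$ elsewhere, the chain matrix $N=\sum_{s=0}^{m-1}E_{1+si',\,1+(s+1)i'}$ (so that $I+N\in\dbU^{(i')}$) satisfies $(N^l)_{1,1+li'}=1$, and $N^l$ is the only power contributing to the $(1,1+li')$-entry; hence that entry of $A^{p^{j'}}-I$ equals $\binom{p^{j'}}{l}$. This makes each $\binom{p^{j'}}{l}$, $1\leq l\leq m$, visible as a single matrix entry, giving the necessity half of every divisibility condition below, while sufficiency is immediate once the relevant coefficients vanish modulo $p^{j+1}$ (or $p^j$). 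Part (a) then follows at once: $(\dbU^{(i')})^{p^{j'}}=\{I\}$ if and only if $p^{j+1}\mid\binom{p^{j'}}{l}$ for all $1\leq l\leq m$ (when $m\geq p^{j'}$ both this condition and the asserted bound fail, since $\binom{p^{j'}}{p^{j'}}=1$; otherwise Lemma \ref{binomial coefficients} applies with $t=m$). That lemma, applied with $j+1$ in place of $j$, rewrites the condition as $j'\geq(j+1)+\lfloor\log_p m\rfloor$, and since $\lfloor\log_p\lfloor i/i'\rfloor\rfloor=\lfloor\log_p(i/i')\rfloor$ this is the bound in (a).

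For part (c) I would separate the non-corner entries from the corner. Every non-corner entry of $A^{p^{j'}}-I$ lies on a superdiagonal $<i$, and so receives contributions only from the $N^l$ with $li'<i$; all such entries vanish for every $A$ if and only if $p^{j+1}\mid\binom{p^{j'}}{l}$ whenever $li'<i$. The corner entry equals $\sum_{l=1}^{m}\binom{p^{j'}}{l}(N^l)_{1,i+1}$ and is only required to be a multiple of $p^j$. When $i'\nmid i$ the non-corner condition already ranges over all $l\leq m$, so (c) coincides with (a); when $i'\mid i$ it ranges over $l\leq m-1$, leaving the single corner constraint $p^j\mid\binom{p^{j'}}{m}$, which I would resolve through the $p$-adic valuation identity $v_p\binom{p^{j'}}{m}=j'-v_p(m)$. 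In each case Lemma \ref{binomial coefficients} together with this identity collapses the combined divisibility conditions to the clean inequality $j'\geq j+\log_p(i/i')$. Finally, part (b) is formal given (a) and (c): the target $I+p^j\dbZ E_{1,i+1}$ is isomorphic to $\dbZ/p$ and so has no proper nontrivial subgroups, whence $(\dbU^{(i')})^{p^{j'}}$ equals it exactly when it is contained in it (part (c)) but is not trivial (the negation of part (a)). Comparing the two bounds, $j+\lceil\log_p(i/i')\rceil\leq j'\leq j+\lfloor\log_p(i/i')\rfloor$, forces $\log_p(i/i')$ to be an integer, i.e.\ $i/i'$ to be a power of $p$, and then $j'=j+\log_p(i/i')$.

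I expect the main obstacle to be part (c): correctly disentangling the corner constraint from the non-corner ones, and reconciling the resulting floor/ceiling divisibility conditions with the single real inequality $j'\geq j+\log_p(i/i')$ uniformly across the cases $i'\nmid i$, $i'\mid i$ with $i/i'$ a power of $p$, and $i'\mid i$ otherwise. The valuation identity $v_p\binom{p^{j'}}{m}=j'-v_p(m)$ is the key arithmetic input that makes these cases match.
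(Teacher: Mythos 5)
Your proposal is correct and follows essentially the same route as the paper: expand $(I+N)^{p^{j'}}$ binomially, use a test matrix supported on the $i'$-th superdiagonal (the paper uses $M^{i'}$ for $M$ the full superdiagonal matrix, you use a sparser chain, but the effect is identical) to realize each $\binom{p^{j'}}{l}$ as a single visible entry, and translate everything into divisibility conditions via Lemma \ref{binomial coefficients}. The only organizational difference is that you prove (c) directly (with the extra standard input $v_p\binom{p^{j'}}{m}=j'-v_p(m)$) and deduce (b) from (a) and (c) using that $I+p^j\dbZ E_{1,i+1}\isom\dbZ/p$ has no proper nontrivial subgroups, whereas the paper proves (b) directly and deduces (c) from (a) and (b); both orderings work.
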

\begin{proof}
Let $N$ be an $(i+1)\times(i+1)$-matrix over $\dbZ/p^{j+1}$ such that $I+N\in\dbU^{(i')}$.
Then $N^l=0$ for every integer $l$ with $i/i'<l$.
Hence
\[
(I+N)^{p^{j'}}=\sum_{l=0}^{p^{j'}}\binom{p^{j'}}lN^l
=\sum_{l=0}^{\min(p^{j'},\lfloor i/i'\rfloor)}\binom{p^{j'}}lN^l.
\]
Further, if $i'|i$, then $N^{i/i'}\in\dbZ E_{1,i+1}$.

In particular, let $M$ be the $(i+1)\times(i+1)$-matrix over $\dbZ/p^{j+1}$ which is $1$ on the (first) super-diagonal, and is $0$ elsewhere.
Then the matrix $M^{i'l}$ is $1$ on the $i'l$-th diagonal above the main one, and is $0$ elsewhere.
In particular, $I+M^{i'}\in \dbU^{(i')}$.
As above,
\[
(1+M^{i'})^{p^{j'}}=\sum_{l=0}^{\min(p^{j'}, \lfloor i/i'\rfloor)}\binom{p^{j'}}lM^{i'l}.
\]
This matrix is $\binom{p^{j'}}l$ on the $i'l$-th diagonals above the main one, and is $0$ elsewhere.

(a) \quad
By the previous observations, $(\dbU^{(i')})^{p^{j'}}=\{I\}$ holds if and only if
\[
p^{j+1}|\binom{p^{j'}}l, \qquad l=1,2\nek\min(p^{j'},\lfloor i/i'\rfloor).
\]
In light of Lemma \ref{binomial coefficients}, this is equivalent to $j'\geq j+1+\min(j',\lfloor\log_p\lfloor i/i'\rfloor\rfloor)$, and it remains to note that $\lfloor\log_p\lfloor i/i'\rfloor\rfloor=\lfloor\log_p(i/i')\rfloor$.

\medskip

(b)\quad
First assume that $i=i'$.
Then $\dbU^{(i')}=I+\dbZ E_{1,i+1}$.
Hence $(\dbU^{(i')})^{p^{j'}}=I+\dbZ p^{j'}E_{1,i+1}$, and the equality $(\dbU^{(i')})^{p^{j'}}=I+\dbZ p^jE_{1,i+1}$ means that $j'=j$, as desired.

Next we assume that $i>i'$.
By the previous observations, $(\dbU^{(i')})^{p^{j'}}=I+\dbZ p^jE_{1,i+1}$ holds if and only if  the following conditions hold:
\begin{enumerate}
\item[(i)]
$i/i'$ is an integer $\leq p^{j'}$;
\item[(ii)]
$p^{j+1}|\binom{p^{j'}}l$, $l=1,2\nek (i/i')-1$;
\item[(iii)]
$p^j|\binom{p^{j'}}{i/i'}$, $p^{j+1}\ndiv\binom{p^{j'}}{i/i'}$.
\end{enumerate}
By Lemma \ref{binomial coefficients} again, (i)--(iii) mean that $i/i'$ is an integer $\leq p^{j'}$, and
\[
\begin{split}
j'&\geq j+1+\lfloor\log_p((i/i')-1)\rfloor\\
j'&\geq j+\lfloor\log_p(i/i')\rfloor\\
j'&<j+1+\lfloor\log_p(i/i'))\rfloor.
\end{split}
\]
This amounts to saying that $j'=j+\log_p(i/i')$.

\medskip

(c) follows from (a) and (b).
\end{proof}

The case $i'=1$ of Proposition \ref{powers of unitriangular group}(a) was shown by Sawin;
see \cite{Efrat20}*{Prop.\ 2.3}.

The following Corollary stands behind our definition of the sets $J(n)$.
In the case $i=n$ it was proved by Min\'a\v c, Rogelstad and T\^an \cite{MinacRogelstadTan16}*{Cor.\ 3.7}.

\begin{cor}
\label{U(n,p) is Z/p}
Suppose that $1\leq i\leq n$ and  $j=j_n(i)$.
One has $\dbU_{(n,p)}=I+p^{j_n(i)}\dbZ E_{1,i+1}$ if and only if $i\in J(n)$.
\end{cor}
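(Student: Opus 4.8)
The plan is to compute the product defining $\dbU_{(n,p)}$ factor by factor and to compare each factor with the subgroup $Z = I + p^{j_n(i)}\dbZ E_{1,i+1}$, which is the order-$p$ subgroup of the center of $\dbU$. Since $\dbU = \dbU_i(R)$ is a group of unitriangular $(i+1)\times(i+1)$-matrices, its lower central series satisfies $\dbU^{(i')} = \{I\}$ for $i' > i$. Combining this with the hypothesis $i \le n$ and the expression $\dbU_{(n,p)} = \prod_{i'=1}^{n}(\dbU^{(i')})^{p^{j_n(i')}}$, the product collapses to $\dbU_{(n,p)} = \prod_{i'=1}^{i}(\dbU^{(i')})^{p^{j_n(i')}}$. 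I would first record that $Z$ is central, so that once every factor is shown to lie in $Z$ the whole product lies in $Z$.

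The key step is to feed the exponent $j = j_n(i)$, which is exactly the $j$ for which $R = \dbZ/p^{j+1}$, into Proposition \ref{powers of unitriangular group}. Part (c) gives, for each $1 \le i' \le i$, that $(\dbU^{(i')})^{p^{j_n(i')}} \le Z$ if and only if $j_n(i') \ge j_n(i) + \log_p(i/i')$. The central observation is that this logarithmic inequality is equivalent to the arithmetic inequality $i'p^{j_n(i')} \ge ip^{j_n(i)}$, obtained by exponentiating with base $p$ and multiplying by $i'$; this is precisely the inequality appearing in condition (a) of Lemma \ref{conditions for J(n)}. Moreover, taking $i' = i$ in Proposition \ref{powers of unitriangular group}(b) (so that $\log_p(i/i') = 0$ and the exponent equals $j_n(i)$) shows that the top factor is exactly $(\dbU^{(i)})^{p^{j_n(i)}} = Z$.

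With this dictionary in place the two implications follow at once. If $i \in J(n)$, then by Lemma \ref{conditions for J(n)} we have $i'p^{j_n(i')} \ge ip^{j_n(i)}$ for all $1 \le i' \le i$, so every factor lies in $Z$ while the top factor equals $Z$; hence $\dbU_{(n,p)} = Z$. Conversely, if $i \notin J(n)$, then condition (a) of Lemma \ref{conditions for J(n)} fails, and since it holds trivially at $i' = i$, any witnessing index satisfies $i' < i$ and $i'p^{j_n(i')} < ip^{j_n(i)}$; by the equivalence above the corresponding factor $(\dbU^{(i')})^{p^{j_n(i')}}$ is not contained in $Z$, so $\dbU_{(n,p)}$ contains an element outside $Z$ and $\dbU_{(n,p)} \ne Z$. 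The main obstacle is the bookkeeping of the key step: correctly identifying the ambient modulus $p^{j+1}$ with $j = j_n(i)$, and verifying that the logarithmic containment condition of Proposition \ref{powers of unitriangular group}(c) matches verbatim the arithmetic condition defining $J(n)$, so that Lemma \ref{conditions for J(n)} applies directly. Once this translation is secured, the equivalence is immediate.
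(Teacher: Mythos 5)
Your proposal is correct and follows essentially the same route as the paper: collapse the product $\prod_{i'=1}^{n}(\dbU^{(i')})^{p^{j_n(i')}}$ using $\dbU^{(i')}=\{I\}$ for $i'>i$, identify the top factor with $I+p^{j_n(i)}\dbZ E_{1,i+1}$ via Proposition \ref{powers of unitriangular group}(b), and translate the containment criterion of part (c) into the inequality $i'p^{j_n(i')}\geq ip^{j_n(i)}$ defining $J(n)$. No gaps.
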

\begin{proof}
Recall that $\dbU_{(n,p)}=\prod_{i'=1}^n(\dbU^{(i')})^{p^{j_n(i')}}$.

If $i'>i$, then $\dbU^{(i')}=\{I\}$, whence $(\dbU^{(i')})^{p^{j_n(i')}}=\{I\}$.

Taking in Proposition \ref{powers of unitriangular group}(b) $i'=i$ and $j'=j=j_n(i)$, we obtain that $(\dbU^{(i)})^{p^{j_n(i)}}=I+p^{j_n(i)}\dbZ E_{1,i+1}$.

Therefore, $\dbU_{(n,p)}=I+p^{j_n(i)}\dbZ E_{1,i+1}$ holds if and only if for every $1\leq i'\leq i$ one has $(\dbU^{(i')})^{p^{j_n(i')}}\leq I+p^{j_n(i)}\dbZ E_{1,i+1}$.
By Proposition \ref{powers of unitriangular group}(c), this inclusion is equivalent to $i'p^{j_n(i')}\geq ip^{j_n(i)}$.
\end{proof}

Thus, for $i\in J(n)$ and $\dbU=\dbU_i(\dbZ/p^{j_n(i)+1})$ there is a central extension
\begin{equation}
\label{central extension}
0\to\dbU_{(n,p)}(\isom\dbZ/p)\to\dbU\to\overline{\dbU}:=\dbU/\dbU_{(n,p)}\to1,
\end{equation}
where the isomorphism is the projection on the $(1,i+1)$-entry composed with the isomorphism $p^{j(i)}\dbZ/p^{j(i)+1}\dbZ\isom\dbZ/p$.

\section{The cohomology elements $\alp_{w,n}$}
\label{section on cohomology}
Let $S$ be again a free profinite group on the basis $X$, and let $n\geq2$.
Consider the transgression homomorphism $\trg\colon H^1(S_{(n,p)})^S\to H^2(S/S_{(n,p)})$ (recall that the cohomology groups are with respect to the coefficient module $\dbZ/p$ with trivial action).
It is the differential $d_2^{01}$ in the Lyndon-Hochschild-Serre spectral sequence corresponding to the closed normal subgroup $S_{(n,p)}$ of $S$ \cite{NeukirchSchmidtWingberg}*{Th.\ 2.4.3}.
It follows from the five term sequence in profinite cohomology \cite{NeukirchSchmidtWingberg}*{Prop.\ 1.6.7} and the fact that $S$ has cohomological dimension $1$, that $\trg$ is an isomorphism.

Now consider a word $w$ of length $i\in J(n)$.
Consider the ring $R_i=\dbZ/p^{j_n(i)+1}$, and set $\dbU=\dbU_i(R_i)$.
As before, let $\overline{\dbU}=\dbU/\dbU_{(n,p)}$.
By Corollary \ref{U(n,p) is Z/p}, the projection on the $(1,i+1)$-entry gives an isomorphism
\[
 \dbU_{(n,p)}\xrightarrow{\sim}p^{j_n(i)}\dbZ/p^{j_n(i)+1}\dbZ.
\]

The Magnus representation  $\rho=\rho_w\colon S\to\dbU$ induces continuous homomorphisms
\[
\bar\rho_{w}\colon S/S_{(n,p)}\to\overline{\dbU}, \quad \rho_{w}^0=\rho|_{S_{(n,p)}}\colon S_{(n,p)}\to\dbU_{(n,p)}.
\]
Let $\bar\rho_w^*\colon H^2(\overline{\dbU})\to H^2(S/S_{(n,p)})$ be the pullback of $\bar\rho_w$.

Let $\gam=\gam_{n,R_i}\in H^2(\overline{\dbU})$ correspond to the extension (\ref{central extension}) under the Schreier correspondence \cite{NeukirchSchmidtWingberg}*{Th.\ 1.2.4}.
We set
\[
\alp_{w,n}=\bar\rho_w^*(\gam)\in H^2(S/S_{(n,p)}).
\]

\begin{exam}
\label{alpha for 1}
$\alp_{w,n}$ for a word $w=(x)$ of length $1$. \quad
\rm
Let $j=j_n(1)=\lceil\log_pn\rceil$, so $\dbU=\dbU_1(\dbZ/p^{j+1})\isom\dbZ/p^{j+1}$.
As $1\in J(n)$, we have $\dbU_{(n,p)}\isom\dbZ/p$,  and the central extension (\ref{central extension}) becomes
\begin{equation}
\label{Bockstein extension}
0\to\dbZ/p\to\dbZ/p^{j+1}\to\dbZ/p^j\to0.
\end{equation}
We consider this extension as a sequence of trivial $S/S_{(n,p)}$-modules.
The \textsl{Bockstein homomorphism}
\[
\Bock_{p^j,S/S_{(n,p)}}\colon H^1(S/S_{(n,p)},\dbZ/p^j)\to H^2(S/S_{(n,p)})
\]
is the associated connecting homomorphism.

We may identify $\rho_{(x)}\colon S\to\dbU$ with  $\eps_{(x),\dbZ/p^{j+1}}\colon S\to\dbZ/p^{j+1}$,
and $\bar\rho_{(x)}\colon S/S_{(n,p)}\to\overline{\dbU}$ with  $\eps_{(x),\dbZ/p^j}\colon S/S_{(n,p)}\to\dbZ/p^j$, which are both continuous homomorphisms.
Thus $\alp_{(x),n}$ corresponds to the pullback of (\ref{Bockstein extension}) under $\eps_{(x),\dbZ/p^j}$.
By \cite{Efrat17}*{Remark 7.3},
\[
\alp_{(x),n}=\Bock_{p^j,S/S_{(n,p)}}(\eps_{(x),\dbZ/p^j}).
\]
\end{exam}

\medskip

For  Example \ref{alpha for n} below, we first recall a few facts about Massey products.
While these products are defined in the general context of differential graded algebras, in the special case of the $n$-fold Massey product $H^1(G,R)^n\to H^2(G,R)$ in profinite (or discrete) group cohomology it can be alternatively described in terms of unitriangular representions.
This was discovered by Dwyer  \cite{Dwyer75} in the discrete case, and we refer to \cite{Efrat14}*{\S8} for the profinite case, which is considered here.
We assume as above that $n\geq2$ and $R$ is a finite commutative ring on which $G$ acts trivially (see \cite{Wickelgren12} for the case of a nontrivial action).

Specifically, let $\dbU=\dbU_n(R)$ and let $\overline{\dbU}$ be again the quotient of $\dbU$ by the central subgroup $I+R E_{1,n+1}(\isom R^+)$.
The central extension
\begin{equation}
\label{central extension in Dwyers context}
0\to R^+\to\dbU\to \overline{\dbU}\to1
\end{equation}
of trivial $G$-modules corresponds to a cohomology element $\gam_R\in H^2(G,R^+)$.
Given $\psi_1\nek\psi_n\in H^1(G,R^+)$, we consider the continuous homomorphisms $\bar\rho\colon G\to\overline{\dbU}$ whose projection $\bar\rho_{k,k+1}\colon G\to R$ on the $(k,k+1)$-entry is $\psi_k$, for $k=1,2\nek n$.
As before, let $\bar\rho^*\colon H^2(\overline{\dbU},R^+)\to H^2(G,R^+)$ be the pullback of $\bar\rho$.
Then $\bar\rho^*(\gam_R)$ corresponds to the central extension
\[
0\to R^+\to\dbU\times_{\overline{\dbU}}G\to G\to1,
\]
where the fiber product is with respect to the natural projection $\dbU\to\overline{\dbU}$ and to $\bar\rho$.
The \textsl{$n$-fold Massey product} $\langle\psi_1\nek\psi_n\rangle$ is the subset of $H^2(G,R^+)$ consisting of all pullbacks $\bar\rho^*(\gam_R)$  \cite{Efrat14}*{Prop.\ 8.3}.
Thus the $n$-fold Massey product $\langle\cdot\nek\cdot\rangle\colon H^1(G,R^+)^n\to H^2(G,R^+)$ is a \textsl{multi-valued map}.
In the special case $n=2$ one has $\langle\psi_1,\psi_2\rangle=\{\psi_1\cup\psi_2\}$.

\begin{exam}
\label{alpha for n}
$\alp_{w,n}$ for a word $w$ of length $n\geq2$. \quad
\rm
Since $j_n(n)=0$ we have $R_n=\dbZ/p$, so $\dbU=\dbU_n(\dbZ/p)$.
As $n\in J(n)$, Corollary \ref{U(n,p) is Z/p} shows that $\dbU_{(n,p)}=I+\dbZ E_{1,n+1}\isom\dbZ/p$.
Thus (\ref{central extension in Dwyers context}) (for $R=\dbZ/p$) coincides with (\ref{central extension}) with $i=n$.

Now take a word $w=(x_1\cdots x_n)\in X^*$ of length $n$.
Let $\bar\rho=\bar\rho_w\colon S/S_{(n,p)}\to\overline{\dbU}$  and let $\bar\rho_{k,k+1}$ be homomorphisms as above.
By its definition as the pullback of  (\ref{central extension}),  $\alp_{w,n}$ is an element of the $n$-fold Massey product $\langle\rho_{12},\rho_{23}\nek\rho_{n,n+1}\rangle$ in $H^2(S/S_{(n,p)})$.
Note that $\bar\rho_{k,k+1}$ is given by $\bar\rho_{k,k+1}(x_l)=\del_{kl}$ for every $1\leq k,l\leq n$.
\end{exam}

\section{The Lyndon bases}
\label{section on bases}
We continue with the set-up of \S\ref{section on cohomology}.
Identifying $H^1(S_{(n,p)})=\Hom(S_{(n,p)},\dbZ/p)$, we obtain a non-degenerate bilinear map
\[
S_{(n,p)}/(S_{(n,p)})^p[S,S_{(n,p)}]\times H^1(S_{(n,p)})^S\to\dbZ/p, \quad (\bar\sig,\varphi)\mapsto\varphi(\sig)
\]
\cite{EfratMinac11}*{Cor.\ 2.2}.
It gives rise  to the bilinear \textsl{transgression pairing}
\begin{equation}
\begin{split}
\label{non-degenerate bilinear map}
(\cdot,\cdot)_n\colon S_{(n,p)}/(S_{(n,p)})^p&[S,S_{(n,p)}]\times H^2(S/S_{(n,p)})\to\dbZ/p, \\
&(\bar\sig,\alp)_n=-(\trg\inv\alp)(\sig),
\end{split}
\end{equation}
where $\bar\sig$ denotes the coset of $\sig\in S_{(n,p)}$.
It is therefore also non-degenerate.

By Proposition \ref{Zassenhaus filtration via Magnus coefficients} and Corollary \ref{U(n,p) is Z/p}, for a word $w$ of length $i\in J(n)$, there is a commutative diagram
\begin{equation}
\label{cd}
\xymatrix{
S_{(n,p)}\ar[r]^{\eps_{w,\dbZ_p}}\ar[d]_{\rho_w^0}&p^{j_n(i)}\dbZ_p \ar[d]^{\pi_i}& \\
\dbU_{(n,p)}\ar[r]^{\sim\qquad}&p^{j_n(i)}\dbZ/p^{j_n(i)+1}\dbZ,
}
\end{equation}
where as before, $\pi_i\colon \dbZ_p\to\dbZ/p^{j_n(i)+1}$ is  the natural projection, and the lower  isomorphism is the projection on the $(1,i+1)$-entry.
We deduce the following link between cohomology and the Magnus map.
As before, we identify $p^{j_n(i)}\dbZ/p^{j_n(i)+1}\dbZ$ with $\dbZ/p$.

\begin{prop}
\label{pairing and Magnus coef}
For $\sig\in S_{(n,p)}$ and a word $w\in X^*$ of length $i\in J(n)$ one has $(\bar\sig,\alp_{w,n})_n=\pi_i(\eps_{w,\dbZ_p}(\sig))$.
\end{prop}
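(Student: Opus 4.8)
The plan is to unwind the definition of the transgression pairing $(\cdot,\cdot)_n$ and relate it, through the Magnus representation, to the Bockstein/Schreier description of $\alp_{w,n}$. Recall that by definition $(\bar\sig,\alp_{w,n})_n = -(\trg^{-1}\alp_{w,n})(\sig)$, where $\trg\colon H^1(S_{(n,p)})^S\to H^2(S/S_{(n,p)})$ is the transgression isomorphism. Since $\alp_{w,n}=\bar\rho_w^*(\gam)$ is the pullback of the extension class $\gam$ via $\bar\rho_w\colon S/S_{(n,p)}\to\overline{\dbU}$, the key is to understand how transgression interacts with this pullback. The guiding principle is functoriality: the element $\alp_{w,n}$ arises from the central extension (\ref{central extension}) pulled back along the Magnus representation, and transgression for such an extension recovers the restriction $\rho_w^0=\rho_w|_{S_{(n,p)}}\colon S_{(n,p)}\to\dbU_{(n,p)}$ as an element of $H^1(S_{(n,p)})^S$.

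First I would make precise that $\trg^{-1}(\alp_{w,n})$ equals (up to the sign built into $(\cdot,\cdot)_n$) the homomorphism $\rho_w^0\colon S_{(n,p)}\to\dbU_{(n,p)}$, where we identify $\dbU_{(n,p)}\isom\dbZ/p$ via the projection on the $(1,i+1)$-entry as in Corollary \ref{U(n,p) is Z/p}. This is the compatibility of transgression with pullback of central extensions: the class $\gam\in H^2(\overline{\dbU})$ transgresses the canonical generator of $H^1(\dbU_{(n,p)})^{\overline{\dbU}}$ (the identity map $\dbU_{(n,p)}\xrightarrow{\sim}\dbZ/p$), and pulling $\gam$ back along $\bar\rho_w$ corresponds, on the $H^1$-side, to pulling this generator back along $\rho_w^0$. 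Concretely, $\trg^{-1}(\bar\rho_w^*\gam)$ is the composite $S_{(n,p)}\xrightarrow{\rho_w^0}\dbU_{(n,p)}\xrightarrow{\sim}\dbZ/p$. I expect this naturality statement to be the technical heart; it follows from the functoriality of the Lyndon--Hochschild--Serre spectral sequence (equivalently, of the five-term exact sequence) applied to the morphism of extensions given by $\rho_w$ sitting over $\bar\rho_w$, together with the fact already used in \S\ref{section on cohomology} that $S$ has cohomological dimension $1$ so transgression is an isomorphism.

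Once this identification is in place, the remaining step is the computation of the composite. For $\sig\in S_{(n,p)}$, evaluating $\trg^{-1}(\alp_{w,n})$ at $\sig$ gives the image of $\rho_w^0(\sig)\in\dbU_{(n,p)}$ under the projection onto the $(1,i+1)$-entry followed by the identification with $\dbZ/p$. By the commutative diagram (\ref{cd}), this projection of $\rho_w^0(\sig)$ is exactly $\pi_i(\eps_{w,\dbZ_p}(\sig))$, since the left vertical map $\rho_w^0$ followed by the lower horizontal isomorphism equals the top map $\eps_{w,\dbZ_p}$ followed by $\pi_i$. Tracking the sign in the definition (\ref{non-degenerate bilinear map}) against the sign conventions in the Schreier correspondence and the transgression map, the two signs cancel and one obtains $(\bar\sig,\alp_{w,n})_n=\pi_i(\eps_{w,\dbZ_p}(\sig))$ as claimed.

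The main obstacle will be verifying the functoriality/naturality statement identifying $\trg^{-1}(\bar\rho_w^*\gam)$ with the composite $\sim\circ\,\rho_w^0$, and in particular pinning down the sign so that it matches the sign $-(\trg^{-1}\alp)(\sig)$ in (\ref{non-degenerate bilinear map}). Both the Schreier correspondence sending (\ref{central extension}) to $\gam$ and the transgression map carry sign conventions, and one must check these are consistent with those of \cite{NeukirchSchmidtWingberg} as used throughout \S\ref{section on cohomology}; a clean way to organize this is to reduce to the case $w=(x)$ of length $1$, where by Example \ref{alpha for 1} the class $\alp_{(x),n}$ is an explicit Bockstein element, compute the pairing directly there, and then invoke naturality to propagate the normalization (including the sign) to all $i\in J(n)$ uniformly.
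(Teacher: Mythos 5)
Your proposal is correct and follows essentially the same route as the paper: the paper likewise identifies $\gam=-\trg(\iota)$ for the canonical character $\iota\colon\dbU_{(n,p)}\xrightarrow{\sim}\dbZ/p$ (citing \cite{Efrat17}*{\S7} for the sign), invokes the naturality square for transgression induced by $\rho_w$ over $\bar\rho_w$, and concludes via the commutative diagram (\ref{cd}). The only difference is cosmetic — the paper settles the sign by the quoted identity rather than by normalizing against the Bockstein case.
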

\begin{proof}
The central extension (\ref{central extension}) gives rise to a transgression homomorphism
$\trg\colon H^1(\dbU^{(n,p)})^\dbU\to H^2(\overline{\dbU})$.
Let $\iota\colon\dbU_{(n,p)}\xrightarrow{\sim}\dbZ/p$ be the composition of the lower row in (\ref{cd}) with the isomorphism $p^{j_n(i)}\dbZ/p^{j_n(i)+1}\dbZ\isom\dbZ/p$.
By the results of \cite{Efrat17}*{\S7},
\[
\gam=-\trg(\iota).
\]
The functoriality of transgression gives a commutative square
\[
\xymatrix{
H^1(\dbU_{(n,p)})^\dbU\ar[r]^{\quad \trg}\ar[d]_{(\rho_{w}^0)^*} & H^2(\overline{\dbU})\ar[d]^{\bar\rho_{w}^*} \\
H^1(S_{(n,p)})^S\ar[r]^{\trg}& H^2(S/S_{(n,p)}).
}
\]
As $\sig\in S_{(n,p)}$, this square and  (\ref{cd}) give
\[
\begin{split}
(\bar\sig,\alp_{w,n})_n&=(\bar\sig,\bar\rho_w^*(\gam))_n
=-(\bar\sig,\bar\rho_w^*(\trg(\iota)))_n
=-(\bar\sig,\trg((\rho_w^0)^*(\iota)))_n  \\
&=((\rho_w^0)^*(\iota))(\sig)
=\iota(\rho_w^0(\sig))
=\pi_i(\eps_{w,\dbZ_p}(\sig)).
\end{split}
\]
\end{proof}

Now consider words $w,w'\in X^*$  of lengths $i,i'\in J(n)$, respectively, with $w'$ Lyndon.
We deduce from Proposition \ref{pairing and Magnus coef} that
\[
(\bar\sig_{w'},\alp_{w,n})_n=\pi_i(\eps_{w,\dbZ_p}(\sig_{w'}))=\langle w,w'\rangle_n.
\]
We can therefore restate Proposition \ref{unitriangular fundamental matrix} cohomologically:

\begin{cor}
\label{unitriangular duality}
The transposed matrix $\bigl[(\bar\sig_{w'},\alp_{w,n})_n\bigr]_{w,w'}^T$, where $w,w'$ range over all Lyndon words in $X^*$ of lengths $i,i'$, respectively, in $J(n)$, and totally ordered by $\preceq$, coincides with the fundamental matrix of level $n$ of the Lyndon words.
In particular, it is unitriangular, whence invertible.
\end{cor}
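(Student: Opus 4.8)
The plan is to derive the corollary directly from Proposition~\ref{pairing and Magnus coef} together with the definition of $\langle w,w'\rangle_n$, so that no new computation is required; the substance has already been carried out in the two propositions being invoked. First I would check that the transgression pairing is in fact defined on the relevant elements: for a Lyndon word $w'$ of length $i'\in J(n)$, the element $\sig_{w'}=\tau_{w'}^{p^{j_n(i')}}$ lies in $S_{(n,p)}$, as recorded in \S\ref{section on the p-Zassenhaus filtration}. I would then apply Proposition~\ref{pairing and Magnus coef} with $\sig=\sig_{w'}$ and with $w$ a Lyndon word of length $i\in J(n)$, which gives $(\bar\sig_{w'},\alp_{w,n})_n=\pi_i(\eps_{w,\dbZ_p}(\sig_{w'}))$.

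The second step is to recognise the right-hand side: by the definition of the pairing $\langle\cdot,\cdot\rangle_n$ in \S\ref{section on the fundamental matrix}, one has $\langle w,w'\rangle_n=\pi_i(\eps_{w,\dbZ_p}(\sig_{w'}))$. Combining the two identities yields the entrywise equality $(\bar\sig_{w'},\alp_{w,n})_n=\langle w,w'\rangle_n$ for all Lyndon words $w,w'$ of lengths in $J(n)$. Consequently the matrix $\bigl[(\bar\sig_{w'},\alp_{w,n})_n\bigr]_{w,w'}$ equals $\bigl[\langle w,w'\rangle_n\bigr]_{w,w'}$, and passing to transposes identifies $\bigl[(\bar\sig_{w'},\alp_{w,n})_n\bigr]_{w,w'}^T$ with the fundamental matrix of level $n$ of the Lyndon words. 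The asserted unitriangularity, and hence the invertibility, then follow at once from Proposition~\ref{unitriangular fundamental matrix}.

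There is no serious obstacle in this argument: the real work lies in Proposition~\ref{pairing and Magnus coef}, which bridges the cohomological transgression pairing and the combinatorial Magnus coefficients, and in Proposition~\ref{unitriangular fundamental matrix}, both already established. The only point demanding care is the bookkeeping—ensuring that the two matrices range over the same index set of Lyndon words of lengths in $J(n)$, ordered by the same total order $\preceq$, and that the roles of $w$ and $w'$ as row versus column labels are matched before transposing—after which the entrywise identity closes the argument immediately.
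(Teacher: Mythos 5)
Your proposal is correct and follows exactly the paper's own argument: apply Proposition~\ref{pairing and Magnus coef} with $\sig=\sig_{w'}$, identify the resulting expression $\pi_i(\eps_{w,\dbZ_p}(\sig_{w'}))$ with $\langle w,w'\rangle_n$ by definition, and then invoke Proposition~\ref{unitriangular fundamental matrix} for the unitriangularity. No gaps; the bookkeeping points you flag are exactly the only things to check.
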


\begin{exam}
\rm
Let $n=2$.
Then $J(n)=\{1,2\}$.
For every $x\in X$ let $\eps_x\in H^1(S/S_{(2,p)})$ be the homomorphism induced by  $\eps_{(x),\dbZ/p}$.
It is $1$ on the coset of $x$, and is $0$ on the coset of any $x'\in X$, $x'\neq x$.

For a one letter word $w=(x)$ (which is always Lyndon) we have $\sig_w=\tau_w^p=x^p$ and $\alp_{w,2}=\Bock_{p,S/S_{(2,p)}}(\eps_x)$ (Example \ref{alpha for 1}).

For a 2-letter Lyndon word  $w=(xy)$, $x<y$, the projections of the representation $\bar\rho_w$ on the $(1,2)$- and $(2,3)$-entries are $\bar\rho_{12}=\eps_x$ and $\bar\rho_{23}=\eps_y$.
Thus $\sig_w=\tau_w=[x,y]$, and $\alp_{w,2}=\eps_x\cup\eps_y$ (Example \ref{alpha for n}).

Recall that the fundamental matrix for Lyndon words and for $n=2$ is the identity matrix (Example \ref{fundamental matrix for n=2}).
Thus we recover the fundamental duality, discovered by Labute,  between Bockstein elements/cup products and $p$-th powers/commutators, respectively (\cite{Labute66}*{Prop.\ 8}, \cite{Labute67}*{\S2}, \cite{NeukirchSchmidtWingberg}*{Ch.\ III, \S9}).
\end{exam}

We will need the following elementary fact in linear algebra  \cite{Efrat17}*{Lemma 8.4}:

\begin{lem}
\label{linear algebra lemma}
Let $R$ be a commutative ring and let $(\cdot,\cdot)\colon A\times B\to R$ be a non-degenerate bilinear map of $R$-modules.
Let $(I,\leq)$ be a finite totally ordered set, and for every $w\in I$ let $a_w\in A$, $b_w\in B$.
Suppose that the matrix $\bigl[(a_w,b_{w'})\bigr]_{w,w'\in I}$ is invertible, and that $a_w$, $w\in I$, generate $A$.
Then $a_w$, $w\in I$, is an $R$-linear basis of $A$, and $b_w$, $w\in I$, is an $R$-linear basis of $B$.
\end{lem}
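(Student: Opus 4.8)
The plan is to exploit the invertibility of $M=\bigl[(a_w,b_{w'})\bigr]_{w,w'\in I}$ together with non-degeneracy in order to transfer basis information across the pairing. I would first settle the claim for $A$, which uses only the hypotheses already in hand. Writing a relation $\sum_{w\in I}c_wa_w=0$ with $c_w\in R$ and pairing it against each $b_{w'}$, bilinearity gives $\sum_{w}c_w(a_w,b_{w'})=0$ for every $w'$, that is $c^TM=0$ where $c=(c_w)_{w\in I}$. Since $M$ is invertible over $R$, this forces $c=0$, so the $a_w$ are $R$-linearly independent; combined with the hypothesis that they generate $A$, they form an $R$-linear basis of $A$. (No non-degeneracy is needed for this half.)

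Next I would treat $B$ by dualizing. Having shown that $A$ is free on $\{a_w\}_{w\in I}$, let $\{a_w^*\}_{w\in I}$ be the dual basis of $A^*=\Hom_R(A,R)$, and let $\phi\colon B\to A^*$ be the $R$-linear map $b\mapsto(\cdot,b)$ induced by the pairing. In the dual basis one has $\phi(b_{w'})=\sum_{w}(a_w,b_{w'})a_w^*$, so the coordinate matrix of the family $\{\phi(b_{w'})\}_{w'\in I}$ with respect to $\{a_w^*\}$ is precisely $M$. As $M$ is invertible, $\{\phi(b_{w'})\}_{w'\in I}$ is an $R$-linear basis of the free module $A^*$. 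In particular these images are $R$-linearly independent, and since they are the $\phi$-images of the $b_{w'}$, the $b_{w'}$ are themselves linearly independent in $B$ (independence of the images under a linear map forces independence of the originals, with no injectivity required).

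It remains to prove that the $b_{w'}$ span $B$, which I expect to be the only nontrivial point. Given $b\in B$, its image $\phi(b)\in A^*$ can be written uniquely as $\phi(b)=\sum_{w'\in I}d_{w'}\phi(b_{w'})$ with $d_{w'}\in R$, because $\{\phi(b_{w'})\}$ is a basis of $A^*$. Then $\phi\bigl(b-\sum_{w'}d_{w'}b_{w'}\bigr)=0$, and here non-degeneracy enters: since the pairing is non-degenerate, $\phi$ is injective, so $b=\sum_{w'}d_{w'}b_{w'}$. Hence the $b_{w'}$ generate $B$, and together with the independence already established they form an $R$-linear basis of $B$. The crux of the argument is exactly this last passage — from ``$\{\phi(b_{w'})\}$ is a basis of $A^*$'' to ``$\{b_{w'}\}$ is a basis of $B$'' — where invertibility of $M$ alone is insufficient and non-degeneracy of $(\cdot,\cdot)$ is what permits pulling the basis back from $A^*$ to $B$.
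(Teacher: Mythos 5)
Your proof is correct: the independence of the $a_w$ via $c^{T}M=0\Rightarrow c=0$, and the transfer to $B$ through the map $\phi\colon B\to A^{*}$, $b\mapsto(\cdot,b)$ --- whose matrix on the family $\{b_{w'}\}$ relative to the dual basis is exactly $M$, with non-degeneracy supplying the injectivity needed to pull the basis of $A^{*}$ back to $B$ --- is precisely the standard argument. Note that the paper itself gives no proof of this lemma but cites it from \cite{Efrat17}*{Lemma 8.4}, whose argument runs along the same lines (with $A^{*}$ identified with $R^{I}$ via the dual basis), so there is nothing to add or correct.
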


We now deduce our first main result.
Note that part (a) of the theorem strengthens Theorem \ref{generators} in the special case where $H$ is the Hall set of Lyndon words.

\begin{thm}
\label{bases}
\begin{enumerate}
\item[(a)]
The $\dbF_p$-linear space $S_{(n,p)}/(S_{(n,p)})^p[S,S_{(n,p)}]$ has a basis consisting of
the cosets $\bar\sig_w$ of $\sig_w$, where $w$ is a Lyndon word in $X^*$ of length $i\in J(n)$.
\item[(b)]
The $\dbF_p$-linear space $H^2(S/S_{(n,p)})$ has a basis consisting of all $\alp_{w,n}$, where $w$ is a  Lyndon word in $X^*$ of length $i\in J(n)$.
\end{enumerate}
\end{thm}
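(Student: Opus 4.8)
The plan is to apply the linear algebra lemma (Lemma \ref{linear algebra lemma}) to the non-degenerate transgression pairing $(\cdot,\cdot)_n$ from (\ref{non-degenerate bilinear map}), using the unitriangularity established in Corollary \ref{unitriangular duality}. Take $A=S_{(n,p)}/(S_{(n,p)})^p[S,S_{(n,p)}]$ and $B=H^2(S/S_{(n,p)})$, with $R=\dbF_p$, and let $I$ be the set of all Lyndon words in $X^*$ whose length lies in $J(n)$, totally ordered by $\preceq$. For each such word $w$ set $a_w=\bar\sig_w\in A$ and $b_w=\alp_{w,n}\in B$.

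First I would assemble the three hypotheses of Lemma \ref{linear algebra lemma}. Non-degeneracy of $(\cdot,\cdot)_n$ was recorded just after (\ref{non-degenerate bilinear map}). Invertibility of the matrix $\bigl[(a_w,b_{w'})\bigr]_{w,w'\in I}=\bigl[(\bar\sig_{w'},\alp_{w,n})_n\bigr]_{w,w'}^T$ is precisely the content of Corollary \ref{unitriangular duality}, where it is shown to coincide with the fundamental matrix of level $n$ and hence to be unitriangular, in particular invertible. Finally, that the $a_w$ generate $A$ follows from Theorem \ref{generators} in the special case where $H$ is the Hall set of Lyndon words: that theorem asserts that the cosets $\bar\sig_w$ with $w\in H$ of length in $J(n)$ generate $S_{(n,p)}/(S_{(n,p)})^p[S,S_{(n,p)}]$, and the Lyndon words are exactly such a Hall set (the Example in \S\ref{section on Hall sets}). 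With these three inputs in hand, Lemma \ref{linear algebra lemma} immediately yields that the $a_w$ form an $\dbF_p$-basis of $A$, proving part (a), and that the $b_w$ form an $\dbF_p$-basis of $B$, proving part (b).

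A point to keep in mind is that Lemma \ref{linear algebra lemma} is stated for a \emph{finite} index set $I$, so strictly speaking the argument runs directly only when $X$ is finite. For general $X$ I would invoke the reduction noted in \S\ref{section on the p-Zassenhaus filtration}: since $S$ is the inverse limit of free profinite groups on finite subsets of $X$, one applies the finite case to each finite subset and passes to the limit, the gradings and the explicit $\preceq$-ordering being compatible with the inclusions of sub-alphabets. This bookkeeping is routine but should be mentioned.

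The work is essentially all upstream, so I expect no genuine obstacle in this final deduction; the only mild care needed is to confirm that $I$ indexes the matrix consistently on both sides (the rows and columns of the transposed matrix in Corollary \ref{unitriangular duality} both range over the same set of Lyndon words of length in $J(n)$, ordered by $\preceq$), and that the identification of the $(w,w')$ entry of $\bigl[(a_w,b_{w'})\bigr]$ with the corresponding entry of the fundamental matrix matches the transpose convention. Once that alignment is checked, the theorem falls out of Lemma \ref{linear algebra lemma} in one step.
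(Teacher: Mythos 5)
Your proposal is correct and follows exactly the paper's own argument: it feeds the non-degeneracy of the transgression pairing, the invertibility of the fundamental matrix from Corollary \ref{unitriangular duality}, and the generation statement of Theorem \ref{generators} into Lemma \ref{linear algebra lemma}, then handles infinite $X$ by the same limit argument. No gaps.
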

\begin{proof}
First assume that $X$ is finite.
By Theorem \ref{generators}, the cosets in (a) generate $S_{(n,p)}/(S_{(n,p)})^p[S,S_{(n,p)}]$.
Furthermore, the bilinear map $(\cdot,\cdot)_n$ of (\ref{non-degenerate bilinear map}) is non-degenerate, and the fundamental matrix  $\bigl[(\bar\sig_{w'},\alp_{w,n})_n\bigr]_{w,w'}$ is invertible, by Corollary \ref{unitriangular duality}.
Therefore Lemma \ref{linear algebra lemma} implies both assertions.

The case of general $X$ follows from the finite case by a standard limit argument (see \cite{NeukirchSchmidtWingberg}*{Prop.\ 1.2.5}).
\end{proof}

When $2\leq n\leq p$ we have $J(n)=\{1,n\}$ (Remark \ref{J(n) for p large}(1)), $j_n(1)=1$, and $j_n(n)=0$.
In view of Examples \ref{alpha for 1} and \ref{alpha for n}, we deduce:

\begin{cor}
\label{bases when p is large}
Suppose that $2\leq n\leq p$.
\begin{enumerate}
\item[(a)]
The $\dbF_p$-linear space $S_{(n,p)}/(S_{(n,p)})^p[S,S_{(n,p)}]$ has a basis consisting of:
\begin{enumerate}
\item[(i)]
the cosets of $x^p$, $x\in X$;
\item[(ii)]
the cosets of $\tau_w$, where $w$ is a Lyndon word in $X^*$ of length $n$.
\end{enumerate}
\item[(b)]
The $\dbF_p$-linear space $H^2(S/S_{(n,p)})$ has a basis consisting of:
\begin{enumerate}
\item[(i)]
The Bockstein elements $\Bock_{p,S/S_{(n,p)}}(\eps_{(x),\dbZ/p})=\alp_{(x),n}$, $x\in X$;
\item[(ii)]
The $n$-fold Massey product elements $\alp_{w,n}$, where $w$ is a Lyndon word in $X^*$ of length $n$.
\end{enumerate}
\end{enumerate}
\end{cor}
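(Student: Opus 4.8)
The plan is to obtain this as a direct specialization of Theorem~\ref{bases}, so that essentially no new argument is required beyond unwinding the relevant quantities in the two extreme lengths. First I would invoke Remark~\ref{J(n) for p large}(1), which gives $J(n)=\{1,n\}$ under the hypothesis $2\leq n\leq p$; thus the Lyndon words $w$ appearing in Theorem~\ref{bases} have length either $1$ or $n$, and the two sub-items of each of (a) and (b) will correspond to these two cases. I would then record the two arithmetic facts $j_n(1)=\lceil\log_p n\rceil=1$ (valid since $1<n\leq p$) and $j_n(n)=\lceil\log_p 1\rceil=0$.

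For the length-$1$ contribution, note that every single-letter word $(x)$ with $x\in X$ is Lyndon. By definition $\sig_{(x)}=\tau_{(x)}^{p^{j_n(1)}}=x^p$, since $\tau_{(x)}=x$ and $j_n(1)=1$; this yields item (a)(i). On the cohomology side, Example~\ref{alpha for 1} computes $\alp_{(x),n}=\Bock_{p^{j_n(1)},S/S_{(n,p)}}(\eps_{(x),\dbZ/p^{j_n(1)}})$, which with $j_n(1)=1$ becomes $\Bock_{p,S/S_{(n,p)}}(\eps_{(x),\dbZ/p})$, giving item (b)(i).

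For the length-$n$ contribution, the value $j_n(n)=0$ forces $\sig_w=\tau_w^{p^{0}}=\tau_w$ for each Lyndon word $w$ of length $n$, which is item (a)(ii); and Example~\ref{alpha for n} identifies the corresponding $\alp_{w,n}$ as an element of an $n$-fold Massey product, which is item (b)(ii). Since Theorem~\ref{bases}(a) and (b) already assert that the families $\sig_w$ and $\alp_{w,n}$, taken over all Lyndon words $w$ of length in $J(n)$, are $\dbF_p$-linear bases of the respective spaces, both (a) and (b) follow at once. There is no genuine obstacle here; the only point requiring care is the elementary $j_n$-bookkeeping, namely verifying that the ceiling values collapse to exactly $1$ and $0$ throughout the stated range $2\leq n\leq p$, so that $\sig_w$ specializes to $x^p$ and to $\tau_w$ and $\alp_{w,n}$ specializes to a Bockstein element and to a Massey product element, respectively.
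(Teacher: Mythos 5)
Your proposal is correct and follows exactly the paper's own derivation: the paper deduces the corollary from Theorem \ref{bases} by noting $J(n)=\{1,n\}$ (Remark \ref{J(n) for p large}(1)), $j_n(1)=1$, $j_n(n)=0$, and citing Examples \ref{alpha for 1} and \ref{alpha for n}. Your $j_n$-bookkeeping and the resulting specializations $\sig_{(x)}=x^p$, $\sig_w=\tau_w$, and the identifications of $\alp_{w,n}$ as Bockstein and Massey product elements are all accurate.
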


The number of words of a given length in a Hall set $H$ can be expressed in terms of  {\sl Witt's necklace function}, defined for integers $i,m\geq1$ by
\[
\varphi_i(m)=\frac1i\sum_{d|i}\mu(d)m^{i/d}.
\]
Here $\mu$ is the M\"obius function, i.e., $\mu(d)=(-1)^k$, if $d$ is a product of $k$ distinct prime numbers, and $\mu(d)=0$ otherwise.
We also set $\varphi_i(\infty)=\infty$.
Then, the number of words of length $i$ in $H$ is $\varphi_i(|X|)$ \cite{Reutenauer93}*{Cor.\ 4.14}
We deduce:

\begin{cor}
\begin{enumerate}
\item[(a)]
For every $n\geq2$ one has
\[
\dim_{\dbF_p}H^2(S/S_{(n,p)})=\sum_{i\in J(n)}\varphi_i(|X|).
\]\
\item[(b)]
If $2\leq n\leq p$, then $\dim_{\dbF_p}H^2(S/S_{(n,p)})=|X|+\varphi_n(|X|)$.
\end{enumerate}
\end{cor}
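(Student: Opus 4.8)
The plan is to derive both parts of the corollary directly from Theorem~\ref{bases}, using the fact recalled just before the statement that the number of words of length $i$ in a Hall set $H$ equals Witt's necklace function $\varphi_i(|X|)$. For part~(a), Theorem~\ref{bases}(b) furnishes an explicit $\dbF_p$-linear basis of $H^2(S/S_{(n,p)})$, namely the elements $\alp_{w,n}$ where $w$ ranges over all Lyndon words in $X^*$ whose length $i$ lies in $J(n)$. Since the Lyndon words form a Hall set (the Example in \S\ref{section on Hall sets}), the dimension is obtained by counting these basis elements: for each admissible length $i\in J(n)$ there are exactly $\varphi_i(|X|)$ Lyndon words, and distinct words give distinct (indeed linearly independent) basis elements. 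Summing over the finitely many values $i\in J(n)$ gives
\[
\dim_{\dbF_p}H^2(S/S_{(n,p)})=\sum_{i\in J(n)}\varphi_i(|X|),
\]
as asserted.

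For part~(b), I would specialize to the range $2\leq n\leq p$. By Remark~\ref{J(n) for p large}(1) the index set collapses to $J(n)=\{1,n\}$, so the sum in part~(a) has only two terms, corresponding to $i=1$ and $i=n$. For $i=1$ one has $\varphi_1(|X|)=|X|$ directly from the defining formula, since the only divisor of $1$ is $1$ and $\mu(1)=1$. For $i=n$ the term is $\varphi_n(|X|)$. Substituting these into the formula of part~(a) yields $\dim_{\dbF_p}H^2(S/S_{(n,p)})=|X|+\varphi_n(|X|)$, which matches the two direct summands of the Main Theorem (a one-dimensional contribution per generator, coming from the Bockstein elements of Example~\ref{alpha for 1}, plus the $n$-fold Massey product contribution of Example~\ref{alpha for n}).

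I do not anticipate a genuine obstacle here: the corollary is essentially a bookkeeping consequence of the already-established Lyndon basis together with the known count of Lyndon words by length. The only point requiring a small amount of care is the case where $X$ is infinite, in which case $\varphi_i(|X|)=\infty$ by the stated convention and both sides of each equality are infinite, so the identities hold in the extended sense; the finite case is the substantive one and follows immediately. One should also note in passing that these dimension counts are consistent with the size of $S_{(n,p)}/S_{(n+1,p)}$ computed in \cite{MinacRogelstadTan16}, though that comparison is not needed for the proof itself.
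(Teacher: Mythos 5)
Your proposal is correct and follows exactly the paper's (implicit) argument: the corollary is deduced from the Lyndon basis of Theorem \ref{bases}(b) together with the count $\varphi_i(|X|)$ of Lyndon words of length $i$, and part (b) follows by specializing $J(n)=\{1,n\}$ and $\varphi_1(|X|)=|X|$. No discrepancy with the paper.
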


\section{Shuffle relations}
\label{section on shuffle relations}
Recall that the shuffle product $u\sha v$ of words $u$, $v$ has been defined in the Introduction.
It extends naturally to a bilinear, commutative and associative product map $\sha\colon \dbZ\langle X\rangle\times\dbZ\langle X\rangle\to\dbZ\langle X\rangle$.
The \textsl{shuffle algebra} $\Sh(X)$ on $X$ is the graded $\dbZ$-algebra whose underlying module is the free module on $X^*$ (graded by the length of words), and its multiplication is $\sha$.

We define the \textsl{infiltration product} $u\downarrow v$ of words $u=(x_1\cdots x_r)$, $v=(x_{r+1}\cdots x_{r+t})$ in $X^*$ as follows (see \cite{ChenFoxLyndon58},  \cite{Reutenauer93}*{pp.\ 134--135}).
Consider all maps $\sig\colon\{1,2\nek r+t\}\to\{1,2\nek r+t\}$ with $\sig(1)<\cdots<\sig(r)$ and $\sig(r+1)<\cdots<\sig(r+t)$, and which satisfy the following
weak form of injectivity:
If $\sig(i)=\sig(j)$, then $x_i=x_j$.
Let the image of $\sig$ consist of $l_1<\cdots<l_{m(\sig)}$.
Then we set
\begin{equation}
\label{infiltration}
u\downarrow v=\sum_\sig(x_{\sig\inv(l_1)}\cdots x_{\sig\inv(l_{m(\sig)})})\in\dbZ\langle X\rangle.
\end{equation}
By our assumption, $x_{\sig\inv(l_i)}$ does not depend on the choice of the preimages $\sig\inv(l_i)$ of $l_i$.
We also write $\Infil(u,v)$ for the set of all such words $(x_{\sig\inv(l_1)}\cdots x_{\sig\inv(l_{m(\sig)})})$.
Thus  $u\sha v$ is the part of $u\downarrow v$ of degree $r+t$, that is, the partial sum corresponding to all such maps $\sig$ which in addition are bijective.
The product $\downarrow$ on words extends by linearity to an associative and commutative bilinear map on $\dbZ\langle X\rangle$.

There is a well defined $\dbZ_p$-bilinear map
\[
(\cdot,\cdot)\colon \dbZ_p\langle\langle X\rangle\rangle\times\dbZ_p\langle X\rangle\to\dbZ_p, \quad
(f,g)=\sum_{w\in X^*}f_wg_w,
\]
where $f_w,g_w$ are the coefficients of $f$, $g$, respectively, at $w$ \cite{Reutenauer93}*{p.\ 17}.

The following connection between the Magnus representation and the infiltration product is proved in the discrete case in \cite{ChenFoxLyndon58}*{Th.\ 3.6}.
We refer to \cite{Vogel05}*{Prop.\ 2.25} and \cite{Morishita12}*{Prop.\ 8.16} for the profinite case.
Here we view the infiltration and shuffle products as elements of $\dbZ\langle X\rangle\subseteq \dbZ_p\langle X\rangle$.

\begin{prop}
\label{CFL relation}
For every $\emptyset\neq u,v\in X^*$ and every $\sig\in S$ one has
\[
\eps_{u,\dbZ_p}(\sig)\eps_{v,\dbZ_p}(\sig)=(\Lam_{\dbZ_p}(\sig),u\downarrow v).
\]
\end{prop}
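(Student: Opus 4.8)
The plan is to show that the set of $\sig\in S$ satisfying the asserted identity is a closed subset of $S$ containing the subgroup generated by $X$; since the abstract subgroup generated by $X$ is dense in the free profinite group $S$, this forces the identity for every $\sig$. The reduction to a dense subset is legitimate because both sides are continuous in $\sig$: the left-hand side is a product of the continuous maps $\eps_{u,\dbZ_p},\eps_{v,\dbZ_p}$, while $u\downarrow v$ is a \emph{finite} $\dbZ$-combination of words (all of length $\le|u|+|v|$), so the right-hand side $\sum_w\eps_{w,\dbZ_p}(\sig)\,(u\downarrow v)_w$ is a finite sum of continuous maps. Writing $\lambda_\sig(g)=(\Lam_{\dbZ_p}(\sig),g)$ for $g\in\dbZ_p\langle X\rangle$, the identity says exactly that $\lambda_\sig(u\downarrow v)=\lambda_\sig(u)\,\lambda_\sig(v)$, i.e.\ that $\lambda_\sig$ is a \emph{character} of the infiltration algebra $(\dbZ_p\langle X\rangle,\downarrow)$ (the unit being handled by $\eps_{\emptyset,\dbZ_p}(\sig)=1$).

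The multiplicativity of $\Lam_{\dbZ_p}$ is where the combinatorics enters. Since multiplication in $\dbZ_p\langle\langle X\rangle\rangle$ is concatenation, one has the deconcatenation formula $\eps_{w,\dbZ_p}(\sig\tau)=\sum_{w=w_1w_2}\eps_{w_1,\dbZ_p}(\sig)\,\eps_{w_2,\dbZ_p}(\tau)$. Expanding $\lambda_{\sig\tau}(u)\,\lambda_{\sig\tau}(v)$ by this formula and comparing with $\lambda_{\sig\tau}(u\downarrow v)$, one finds that $\lambda_{\sig\tau}$ is a character whenever $\lambda_\sig$ and $\lambda_\tau$ are, \emph{provided} the purely combinatorial compatibility
\[
(u\downarrow v)_{ab}=\sum_{u=u_1u_2,\ v=v_1v_2}(u_1\downarrow v_1)_a\,(u_2\downarrow v_2)_b
\]
holds for all words $a,b,u,v$. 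Equivalently, the deconcatenation coproduct $\Delta(w)=\sum_{w=w_1w_2}w_1\tensor w_2$ is an algebra map for $\downarrow$, so that $(\dbZ\langle X\rangle,\downarrow,\Delta)$ is a graded bialgebra; being connected it is automatically a Hopf algebra, and its characters form a group under the convolution dual to $\Delta$. Under the identification $\lambda_f\leftrightarrow f$ this convolution is precisely concatenation of the power series $\Lam_{\dbZ_p}(\sig)$, so the group structure is compatible with $\Lam_{\dbZ_p}$ being a homomorphism. In particular the antipode disposes of inverses: if $\lambda_\sig$ is a character, so is $\lambda_{\sig^{-1}}=\lambda_{\Lam_{\dbZ_p}(\sig)^{-1}}$, without any separate computation.

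It remains to check the base case and to establish the compatibility identity. For a generator $x$ one has $\Lam_{\dbZ_p}(x)=1+x$, so $\eps_{w,\dbZ_p}(x)$ equals $1$ for $w=\emptyset,(x)$ and $0$ otherwise; inspecting the degree $\le 1$ part of $u\downarrow v$ then yields $\lambda_x(u\downarrow v)=\lambda_x(u)\,\lambda_x(v)$ for all nonempty $u,v$, so $\lambda_x$ is a character, and by the previous paragraph the whole subgroup $\langle X\rangle$ maps to characters. The main obstacle is the displayed compatibility identity. I would prove it by a cutting bijection on the maps $\sig$ defining the infiltration product in \eqref{infiltration}: a contributing map for $u\downarrow v$ whose output is the concatenation $ab$ is split, at the threshold in its image separating the first $|a|$ letters from the rest, into a pair of contributing maps for $u_1\downarrow v_1$ and $u_2\downarrow v_2$, where $u=u_1u_2$ and $v=v_1v_2$ are the factorizations induced by the order-preserving blocks; the process is reversed by juxtaposing maps on disjoint blocks. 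The delicate point — and the part I expect to require the most care — is matching the weak-injectivity condition across the cut and respecting the normalization of the image in \eqref{infiltration}, so that the overlap (non-shuffle) terms are neither lost nor double-counted. Once this bijection is in hand, the steps above combine to give the proposition for all of $S$.
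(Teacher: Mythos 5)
Your argument is correct in outline, but it is genuinely different from what the paper does: the paper offers no proof at all, merely citing Chen--Fox--Lyndon (Th.\ 3.6) for the discrete statement and Vogel and Morishita for the profinite extension. The classical proof behind those references interprets $\eps_{w,\dbZ_p}(\sig)$, for $\sig$ a word in $X^{\pm1}$, as a signed count of occurrences of $w$ as a scattered subword, and proves the identity by induction on the length of $\sig$, treating generators and their inverses by hand; the profinite case then follows by the same density/continuity step you use. Your repackaging pushes all of the combinatorics into the single statement that the deconcatenation coproduct $\Delta$ is an algebra map for $\downarrow$, i.e.\ that $(\dbZ\langle X\rangle,\downarrow,\Delta)$ is a connected graded bialgebra, hence a Hopf algebra; this buys you closure of the characters $\lambda_\sig$ under convolution (which matches multiplicativity of $\Lam_{\dbZ_p}$) and, via the antipode, a free treatment of inverses, at the price of having to prove that one compatibility identity. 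The identity is true and your cutting bijection is the right proof: in the equivalent description where $(u\downarrow v)_c$ counts pairs of increasing injections $u\hookrightarrow c$, $v\hookrightarrow c$ whose images jointly cover $c$, cutting $c=ab$ at position $|a|$ splits such a pair into covering pairs for $a$ and for $b$ over the induced factorizations $u=u_1u_2$, $v=v_1v_2$, and weak injectivity passes both ways because identified indices land in the same block. Two points deserve care in the write-up. First, empty factors $u_1,v_1,u_2,v_2$ unavoidably occur in the deconcatenation sum, so you must record the (trivial) character property when one argument is $\emptyset$, using $\eps_{\emptyset,\dbZ_p}=1$ and that $\emptyset$ is the unit for $\downarrow$. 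Second, the paper's displayed formula for $u\downarrow v$, read literally (summing over all maps $\sig$ rather than only those whose image is an initial segment), gives $x\downarrow x=2xx+2x$ instead of the standard $2xx+x$, which would falsify both your compatibility identity and the proposition itself already for $\sig=x$; your proof, like the proposition, requires the standard normalization, which is clearly what is intended.
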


\begin{cor}
\label{Lam and shuffles}
Let $u,v$ be nonempty words in $X^*$ with $i=|u|+|v|\leq n$.
For every $\sig\in S_{(n,p)}$ one has $(\Lam_{\dbZ_p}(\sig),u\sha v)\in p^{j_n(i-1)}\dbZ_p$.
\end{cor}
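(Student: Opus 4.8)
The plan is to reduce everything to the Chen--Fox--Lyndon formula of Proposition \ref{CFL relation}, which gives
\[
(\Lam_{\dbZ_p}(\sig),u\downarrow v)=\eps_{u,\dbZ_p}(\sig)\eps_{v,\dbZ_p}(\sig),
\]
and then to isolate the shuffle contribution inside the infiltration product. Writing $r=|u|$, $t=|v|$, recall that $u\downarrow v$ is a $\dbZ$-linear combination of words of degrees $m$ with $\max(r,t)\le m\le r+t=i$, and that its top-degree part (degree $i$) is exactly $u\sha v$, all remaining words having degree $m\le i-1$. Since for any single word $w$ one has $(\Lam_{\dbZ_p}(\sig),w)=\eps_{w,\dbZ_p}(\sig)$, the $\dbZ_p$-bilinearity of the pairing lets me write
\[
(\Lam_{\dbZ_p}(\sig),u\sha v)=\eps_{u,\dbZ_p}(\sig)\eps_{v,\dbZ_p}(\sig)-\bigl(\Lam_{\dbZ_p}(\sig),(u\downarrow v)-(u\sha v)\bigr),
\]
and I would bound the two terms on the right separately.

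For the product term, Proposition \ref{Zassenhaus filtration via Magnus coefficients} gives $\eps_{u,\dbZ_p}(\sig)\in p^{j_n(r)}\dbZ_p$ and $\eps_{v,\dbZ_p}(\sig)\in p^{j_n(t)}\dbZ_p$, so the product lies in $p^{j_n(r)+j_n(t)}\dbZ_p$. The numerical point is then that $j_n(r)+j_n(t)\ge j_n(i-1)$, and this is immediate from the minimality defining $j_n$: since $v$ is nonempty, $t\ge1$, so $i-1=r+t-1\ge r$, whence
\[
(i-1)p^{j_n(r)+j_n(t)}\ge r\,p^{j_n(r)+j_n(t)}\ge r\,p^{j_n(r)}\ge n,
\]
using $p^{j_n(t)}\ge1$; by minimality of $j_n(i-1)$ this forces $j_n(i-1)\le j_n(r)+j_n(t)$, so the product term already lies in $p^{j_n(i-1)}\dbZ_p$.

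For the lower-degree correction, every word $w$ occurring in $(u\downarrow v)-(u\sha v)$ has length $m$ with $\max(r,t)\le m\le i-1\le n$, so Proposition \ref{Zassenhaus filtration via Magnus coefficients} gives $\eps_{w,\dbZ_p}(\sig)\in p^{j_n(m)}\dbZ_p$; and $j_n$ is weakly decreasing in its argument (if $m\le i-1$ then $(i-1)p^{j_n(m)}\ge m\,p^{j_n(m)}\ge n$, so $j_n(i-1)\le j_n(m)$), whence each such coefficient lies in $p^{j_n(i-1)}\dbZ_p$. As $u\downarrow v$ has integer coefficients, the whole correction term lies in $p^{j_n(i-1)}\dbZ_p$, and combining the two bounds proves the claim. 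I do not expect a genuine obstacle here: the only real content is the chain of valuation inequalities, all reducing to the minimality of $j_n$ and the case $t\ge1$. It is worth noting that the product term in fact lands in the even smaller ideal $p^{j_n(r)+j_n(t)}\dbZ_p$, so the weaker bound $p^{j_n(i-1)}\dbZ_p$ appearing in the statement is forced by the degree-$(i-1)$ infiltration terms rather than by the shuffle part itself.
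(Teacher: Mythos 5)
Your proof is correct and follows essentially the same route as the paper: both isolate $u\sha v$ as the top-degree part of $u\downarrow v$, apply Proposition \ref{CFL relation} together with the valuation bounds of Proposition \ref{Zassenhaus filtration via Magnus coefficients}, and conclude via the monotonicity of $j_n$. Your extra observation that the product term lies in the smaller ideal $p^{j_n(r)+j_n(t)}\dbZ_p$ is a harmless refinement of the paper's simpler use of $j_n(r)\geq j_n(i-1)$.
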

\begin{proof}
Let $w$ be a word of length $1\leq k\leq i-1$.
Then $j_n(k)\geq j_n(i-1)$, so by Proposition \ref{Zassenhaus filtration via Magnus coefficients},
$\eps_{w,\dbZ_p}(\sig)\in p^{j_n(k)}\dbZ_p\subseteq p^{j_n(i-1)}\dbZ_p$.
In particular, this is the case for $w=u$, $w=v$, and for $w\in\Infil(u,v)$ of length smaller than $i$.
Since $u\sha v$ is the part of $u\downarrow v$ consisting of summands of maximal length $i$, Proposition \ref{CFL relation} implies that  $(\Lam_{\dbZ_p}(\sig),u\sha v)\in p^{j_n(i-1)}\dbZ_p$.
\end{proof}

We obtain the following \textsl{shuffle relations}.
Here $X^i$ stands for the set of words in $X^*$ of length $i$.

\begin{thm}
\label{shuffle relations}
Let $\emptyset\neq  u,v\in X^*$ with $i=|u|+|v|\in J(n)$.
Then
\[
\sum_{w\in X^i}(u\sha v)_w\alp_{w,n}=0.
\]
\end{thm}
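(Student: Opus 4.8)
The plan is to exploit the non-degeneracy of the transgression pairing $(\cdot,\cdot)_n$ of (\ref{non-degenerate bilinear map}). Set
\[
\alp=\sum_{w\in X^i}(u\sha v)_w\alp_{w,n}\in H^2(S/S_{(n,p)}),
\]
a finite sum since $u\sha v$ is homogeneous of degree $i=|u|+|v|$, so only words of length $i$ occur; as $i\in J(n)$, each $\alp_{w,n}$ is defined. It then suffices to show that $(\bar\sig,\alp)_n=0$ for every $\sig\in S_{(n,p)}$, for then non-degeneracy forces $\alp=0$.

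First I would expand the pairing by bilinearity and apply Proposition \ref{pairing and Magnus coef} (legitimate, as every $w$ in the sum has length $i\in J(n)$), obtaining
\[
(\bar\sig,\alp)_n=\sum_{w\in X^i}(u\sha v)_w\,\pi_i(\eps_{w,\dbZ_p}(\sig))
=\pi_i\Bigl(\sum_{w\in X^i}(u\sha v)_w\,\eps_{w,\dbZ_p}(\sig)\Bigr).
\]
The inner sum is precisely the pairing $(\Lam_{\dbZ_p}(\sig),u\sha v)$ of the power series $\Lam_{\dbZ_p}(\sig)$ against the homogeneous polynomial $u\sha v$, since the coefficient of $w$ in $\Lam_{\dbZ_p}(\sig)$ is $\eps_{w,\dbZ_p}(\sig)$. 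By Corollary \ref{Lam and shuffles} this quantity lies in $p^{j_n(i-1)}\dbZ_p$.

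The crux --- and the point at which the hypothesis $i\in J(n)$ is used decisively --- is the arithmetic estimate $j_n(i-1)\geq j_n(i)+1$. Since $u,v$ are nonempty, $i\geq2$, so I would write $i=\lceil n/p^k\rceil$ with $j_n(i)=k$ (Remark \ref{J(n) for p large}(2)); minimality of the ceiling gives $i-1<n/p^k$, whence $(i-1)p^k<n$ and therefore $j_n(i-1)\geq k+1=j_n(i)+1$. Consequently $(\Lam_{\dbZ_p}(\sig),u\sha v)\in p^{j_n(i-1)}\dbZ_p\subseteq p^{j_n(i)+1}\dbZ_p$, so its image under $\pi_i\colon\dbZ_p\to\dbZ/p^{j_n(i)+1}$ is $0$. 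Hence $(\bar\sig,\alp)_n=0$ for all $\sig\in S_{(n,p)}$, and the relation follows. The only genuine obstacle is isolating this valuation jump; everything else is a formal manipulation of the already-established dictionary between the transgression pairing and the Magnus coefficients.
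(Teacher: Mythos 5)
Your proposal is correct and follows essentially the same route as the paper: reduce to the vanishing of the transgression pairing against every $\bar\sig$, rewrite via Proposition \ref{pairing and Magnus coef} as $\pi_i\bigl((\Lam_{\dbZ_p}(\sig),u\sha v)\bigr)$, and kill it with Corollary \ref{Lam and shuffles} together with the valuation jump $j_n(i-1)>j_n(i)$. The only cosmetic difference is that the paper derives that jump from condition (a) of Lemma \ref{conditions for J(n)} (taking $i'=i-1$ in $(i-1)p^{j_n(i-1)}\geq ip^{j_n(i)}$), whereas you use the equivalent condition (b) via the ceiling description of $i$.
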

\begin{proof}
As $2\leq i\in J(n)$, we have $(i-1)p^{j_n(i-1)}\geq ip^{j_n(i)}$, whence $j_n(i-1)>j_n(i)$.

We recall that $u\sha v$ is homogenous of degree $i$.
For  $\sig\in S_{(n,p)}$, Corollary \ref{Lam and shuffles} gives
\[
\begin{split}
\sum_{w\in X^i}(u\sha v)_w\eps_{w,\dbZ_p}(\sig)&=\sum_{w\in X^*}(u\sha v)_w\eps_{w,\dbZ_p}(\sig)=(\Lam_{\dbZ_p}(\sig),u\sha v)\\
&\in p^{j_n(i-1)}\dbZ_p\subseteq p^{j_n(i)+1}\dbZ_p.
\end{split}
\]
Therefore, by Proposition \ref{pairing and Magnus coef},
\[
\begin{split}
(\bar\sig,\sum_{w\in X^i}(u\sha v)_w\alp_{w,n})_n&=\sum_{w\in X^i}(u\sha v)_w(\bar\sig,\alp_{w,n})_n
=\sum_{w\in X^i}(u\sha v)_w\,\pi_i(\eps_{w,\dbZ_p}(\sig))\\
&=\pi_i\Bigl(\sum_{w\in X^i}(u\sha v)_w\eps_{w,\dbZ_p}(\sig)\Bigr)=0.
\end{split}
\]
Now use the fact that  $(\cdot,\cdot)_n$ is non-degenerate.
\end{proof}

Given a graded $R$-algebra $A=\bigoplus_{i\geq0} A_i$,
we denote $A_+=\bigoplus_{i\geq1} A_i$.
Let $\hbox{WD}(A)$ be the $R$-submodule of $A$ generated by all products $aa'$, where $a,a'\in A_+$.
We call $\hbox{WD}(A)$ the submodule of \textsl{weakly decomposable elements} of $A$.
It is also generated by all products $aa'$, where $a,a'\in A_+$ are homogenous.
Hence the quotient $A_\indec=A/\hbox{WD}(A)$ has the structure of a graded $R$-module, which we call the \textsl{indecomposable quotient} of $A$.

Note that $\hbox{WD}(A)_0=\hbox{WD}(A)_1=\{0\}$, so the graded module morphism $A\to A_{\indec}$ is an isomorphism in degrees $0$ and $1$.
For example, when $A=R\langle X\rangle$, one has $A_{\indec,0}=R$, $A_{\indec,1}$ is the free $R$-module on the basis $X$, and $A_{\indec,i}=0$ for all $i\geq2$.

When $A=\Sh(X)$ is the shuffle algebra, we recover the module $\Sh(X)_{\indec,n}$ as defined in the Introduction.
The following key fact was proved in \cite{Efrat20}*{Prop.\ 6.3}.
It is based on a construction by Radford \cite{Radford79} and Perrin--Viennot of a basis of $\dbZ\langle X\rangle$, which arises from the decomposition of words in $X^*$ into Lyndon words.

\begin{prop}
\label{Lyndon words generate}
Suppose that $1\leq n<p$.
Then  the images of the Lyndon words of length
 $n$ span $\Sh(X)_{\indec,n}\tensor(\dbZ/p)$ as an $\dbF_p$-linear space.
\end{prop}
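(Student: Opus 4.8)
The plan is to deduce the statement from the \emph{triangularity} of the Radford--Lyndon basis of the shuffle algebra, reducing everything to the single arithmetic point that $n<p$ forces the factorials occurring below to be units modulo $p$. Recall the Chen--Fox--Lyndon factorization: every $w\in X^*$ is a concatenation $w=\ell_1^{a_1}\cdots\ell_r^{a_r}$ with $\ell_1>_{\rm alp}\cdots>_{\rm alp}\ell_r$ Lyndon and $a_j\geq1$, and $w$ is Lyndon if and only if $r=1$, $a_1=1$. The Radford and Perrin--Viennot basis rests on the integral expansion
\begin{equation}
\label{radford triangularity}
\ell_1^{\sha a_1}\sha\cdots\sha\ell_r^{\sha a_r}=a_1!\cdots a_r!\,w+\sum_{u<_{\rm alp}w}c_{w,u}\,u,\qquad c_{w,u}\in\dbZ,
\end{equation}
in which $w$ is the $\leq_{\rm alp}$-maximal word occurring and every $u$ shares the content (multiset of letters) of $w$ (\cite{Radford79}; see also \cite{Reutenauer93}*{\S6}). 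Since the shuffle product is homogeneous and content-preserving, all terms in (\ref{radford triangularity}) have length $n=|w|$ and the content of $w$, so only finitely many words intervene. This lets me treat an arbitrary (possibly infinite) $X$ by arguing separately inside each finite set of words of a fixed content, and reduces the proof to a strong induction along $\leq_{\rm alp}$. Note also that only spanning (i.e.\ surjectivity) is asked, which is why no control of a basis is needed.

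I would then run a strong induction on $w$ along $\leq_{\rm alp}$ within a fixed content class, proving that the class $[w]$ in $\Sh(X)_{\indec,n}\tensor(\dbZ/p)$ lies in the $\dbF_p$-span of the classes of length-$n$ Lyndon words. If $w$ is Lyndon there is nothing to prove. If $w$ is not Lyndon then $\sum_j a_j\geq2$, so the left-hand side of (\ref{radford triangularity}) factors as $\ell_1\sha(\ell_1^{\sha(a_1-1)}\sha\cdots\sha\ell_r^{\sha a_r})$, a shuffle product of two nonempty factors, hence lies in $\mathrm{WD}(\Sh(X))$. Reducing (\ref{radford triangularity}) modulo $\mathrm{WD}(\Sh(X))$ and modulo $p$ gives
\[
a_1!\cdots a_r!\,[w]=-\sum_{u<_{\rm alp}w}c_{w,u}\,[u]\quad\text{in }\Sh(X)_{\indec,n}\tensor(\dbZ/p).
\]
Because $\sum_j a_j\leq|w|=n<p$, each $a_j<p$, so $a_1!\cdots a_r!$ is a product of integers prime to $p$ and is invertible in $\dbF_p$. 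Dividing, $[w]$ is an $\dbF_p$-combination of the $[u]$ with $u<_{\rm alp}w$, each in the desired span by induction. The $\leq_{\rm alp}$-minimal word of the content class, where the sum is empty, is handled by the same relation: it is either Lyndon, or its class is killed outright since $a_1!\cdots a_r!\,[w]=0$ with an invertible coefficient. This closes the induction and yields the asserted spanning.

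The only genuine obstacle is the passage from the rational to the characteristic-$p$ picture. Over $\dbQ$ the shuffle algebra is the polynomial algebra on the Lyndon words, so $\Sh(X)_{\indec,n}\tensor\dbQ$ is freely spanned by the length-$n$ Lyndon words for every $n$; relation (\ref{radford triangularity}) is exactly the integral refinement of this statement. What can break after reduction is the invertibility of the diagonal coefficients $a_1!\cdots a_r!$, and this is precisely where $n<p$ enters and cannot be dropped: once $n\geq p$ some $a_j!$ may be divisible by $p$, the division step collapses, and the length-$n$ Lyndon words no longer suffice --- consistent with the appearance of the auxiliary lengths $\lceil n/p^k\rceil$ elsewhere in the paper. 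The one point needing care is to pin down the direction and the exact diagonal coefficient in (\ref{radford triangularity}) against the Radford normalization, but this involves nothing beyond the elementary shuffle count $\ell^{\sha a}=a!\,\ell^a+(\leq_{\rm alp}\text{-smaller words})$ for a single Lyndon word $\ell$.
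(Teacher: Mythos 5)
Your argument is correct: the Radford triangularity $\ell_1^{\sha a_1}\sha\cdots\sha\ell_r^{\sha a_r}=a_1!\cdots a_r!\,w+(\leq_{\rm alp}\hbox{-smaller words of the same content})$, combined with the observations that the left-hand side is weakly decomposable whenever $w$ is not Lyndon and that $a_1!\cdots a_r!$ is a unit in $\dbF_p$ because $\sum_j a_j\leq n<p$, is exactly the mechanism behind this proposition. The paper does not reprove the statement but cites it from \cite{Efrat20}*{Prop.\ 6.3}, whose proof rests on the same Radford/Perrin--Viennot construction, so your proposal is essentially a correct, self-contained rendering of the cited argument.
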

In fact, in \cite{Efrat20}*{Th.\ 7.3(b)} it is proved that these images form a linear basis of $\Sh(X)_{\indec,n}\tensor(\dbZ/p)$, but we shall not use this stronger result.

\begin{thm}
\label{surjectivity}
Suppose that $n\geq2$.
The map $w\mapsto\alp_{w,n}$ induces an epimorphism of $\dbF_p$-linear spaces
\[
\Bigl(\bigoplus_{i\in J(n)}\Sh(X)_{\indec,i}\Bigr)\tensor(\dbZ/p)\to H^2(S/S_{(n,p)}).
\]
\end{thm}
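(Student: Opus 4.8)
The plan is to obtain the map from the shuffle relations of Theorem \ref{shuffle relations} and then read off surjectivity from the Lyndon basis of Theorem \ref{bases}(b). First I would fix $i\in J(n)$ and consider the $\dbZ$-linear map $\dbZ\langle X\rangle_i\to H^2(S/S_{(n,p)})$ sending a word $w$ of length $i$ to $\alp_{w,n}$. By Theorem \ref{shuffle relations}, for any nonempty words $u,v$ with $|u|+|v|=i$ the image of the shuffle product $u\sha v=\sum_{w\in X^i}(u\sha v)_w\,w$ under this map is $\sum_{w\in X^i}(u\sha v)_w\,\alp_{w,n}=0$. Since these shuffle products are exactly the homogeneous generators, in degree $i$, of the weakly decomposable submodule $\hbox{WD}(\Sh(X))$ of \S\ref{section on shuffle relations}, the map annihilates $\hbox{WD}(\Sh(X))_i$ and hence factors through the quotient $\Sh(X)_{\indec,i}$. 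As the target $H^2(S/S_{(n,p)})$ is an $\dbF_p$-vector space, a $\dbZ$-linear map out of a free $\dbZ$-module into it factors through reduction mod $p$, so the induced map factors further through $\Sh(X)_{\indec,i}\tensor(\dbZ/p)$. Taking the direct sum over $i\in J(n)$ then yields the asserted homomorphism.

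It remains to prove surjectivity. By construction, the image of the map is the $\dbF_p$-span of the elements $\alp_{w,n}$ as $w$ ranges over \emph{all} words in $X^*$ whose length lies in $J(n)$ (the classes of such words span each $\Sh(X)_{\indec,i}\tensor(\dbZ/p)$, and each maps to $\alp_{w,n}$). In particular, the image contains $\alp_{w,n}$ for every \emph{Lyndon} word $w$ of length $i\in J(n)$. By Theorem \ref{bases}(b), precisely these latter elements form an $\dbF_p$-linear basis of $H^2(S/S_{(n,p)})$. Hence the image is all of $H^2(S/S_{(n,p)})$, and the map is an epimorphism.

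Neither step presents a genuine obstacle once the cited results are in hand: well-definedness is a direct reading of Theorem \ref{shuffle relations}, and surjectivity is immediate from the Lyndon basis. The substantive content of the statement is thus already carried by those two inputs — Theorem \ref{shuffle relations}, which rests on the Chen--Fox--Lyndon infiltration identity of Proposition \ref{CFL relation} together with the $p$-adic divisibility estimate of Corollary \ref{Lam and shuffles}, and Theorem \ref{bases}(b), which rests on the unitriangular duality of Corollary \ref{unitriangular duality}. The only point requiring a moment's care is the purely formal observation that the shuffle products $u\sha v$ with $|u|+|v|=i$ and $u,v$ nonempty are exactly the generators cut out to form $\Sh(X)_{\indec,i}$, so that the factorization through the indecomposable quotient is legitimate. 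I would note finally that this establishes only the epimorphism asserted here; the injectivity needed for the isomorphism of the Main Theorem in the range $n<p$ is a separate matter, to be handled via a dimension count using Proposition \ref{Lyndon words generate}.
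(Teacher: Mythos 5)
Your proposal is correct and follows essentially the same route as the paper: define $\Phi_i$ on $\dbZ\langle X\rangle_i$ by $w\mapsto\alp_{w,n}$, kill the shuffle products via Theorem \ref{shuffle relations} to factor through $\Sh(X)_{\indec,i}\tensor(\dbZ/p)$, and read off surjectivity from the Lyndon basis of Theorem \ref{bases}(b). The extra remarks you add (that the degree-$i$ shuffle products of nonempty words generate $\mathrm{WD}(\Sh(X))_i$, and that injectivity is deferred to the Main Theorem) are accurate and consistent with the paper.
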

\begin{proof}
For $i\in J(n)$, the map $X^i\to H^2(S/S_{(n,p)})$, $w\mapsto \alp_{w,n}$, extends by linearity to a $\dbZ$-module homomorphism
\[
\Phi_i\colon \dbZ\langle X\rangle_i=\bigoplus_{w\in X^i}\dbZ w\to H^2(S/S_{(n,p)}), \quad f=\sum_{w\in X^i}f_ww\mapsto\sum_{w\in X^i}f_w\alp_{w,n}.
\]
By Theorem \ref{shuffle relations}, $\Phi_i(u\sha v)=0$ for any nonempty words $u,v\in X^*$ with $i=|u|+|v|$.
Consequently, $\Phi_i$ factors via $\Sh(X)_{\indec,i}$, and induces an $\dbF_p$-linear map
\[
\bar\Phi_i\colon \Sh(X)_{\indec,i}\tensor(\dbZ/p)\to H^2(S/S_{(n,p)}),
\]
where $\bar\Phi_i(\bar w)=\alp_{w,n}$ for $w\in X^i$.
Since the $\alp_{w,n}$, where $w$ ranges over all Lyndon words of an arbitrary length $i\in J(n)$, form an $\dbF_p$-linear basis of $H^2(S/S_{(n,p)})$ (Theorem \ref{bases}(b)), we obtain an epimorphism
\[
\label{epimorphism}
\bigoplus_{i\in J(n)}\bar\Phi_i\colon\bigl(\bigoplus_{i\in J(n)}\Sh(X)_{\indec,i}\Bigr)\tensor(\dbZ/p)\to H^2(S/S_{(n,p)}).
\qedhere
\]
\end{proof}

We now obtain the Main Theorem from the Introduction:

\begin{thm}
Suppose that $2\leq n<p$.
Then there is an isomorphism of $\dbF_p$-linear spaces
\begin{equation}
\label{main isomorphism}
\Bigl(\bigoplus_{x\in X}\dbZ/p\Bigr)\oplus\Bigl((\Sh(X)_{\indec,n}\tensor(\dbZ/p)\Bigr)\xrightarrow{\sim} H^2(S/S_{(n,p)}).
\end{equation}
Specifically, this isomorphism maps a generator $1_x$ of the $\dbZ/p$-summand at $x\in X$ to  $\Bock_{p,S/S_{(n,p)}}(\eps_{(x),\dbZ/p})$, and maps the image $\bar w$ of a word $w\in X^*$ of length $n$ to the $n$-fold Massey product element $\alp_{w,n}$.
\end{thm}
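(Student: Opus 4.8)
The plan is to upgrade the epimorphism of Theorem~\ref{surjectivity} to an isomorphism, reading off the two summands from the case $J(n)=\{1,n\}$ and then using the Lyndon basis of Theorem~\ref{bases}(b) together with the spanning statement of Proposition~\ref{Lyndon words generate}. First I would invoke Remark~\ref{J(n) for p large}(1): since $2\leq n<p$, one has $J(n)=\{1,n\}$. As observed after the definition of the indecomposable quotient, the graded morphism $\Sh(X)\to\Sh(X)_{\indec}$ is an isomorphism in degree $1$, so $\Sh(X)_{\indec,1}=\dbZ\langle X\rangle_1$ is free on $X$ and $\Sh(X)_{\indec,1}\tensor(\dbZ/p)\isom\bigoplus_{x\in X}\dbZ/p$, with the generator $1_x$ corresponding to the image of the one-letter word $(x)$. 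Hence the domain of the epimorphism $\bigoplus_{i\in J(n)}\bar\Phi_i$ supplied by Theorem~\ref{surjectivity} is exactly the left-hand side of the asserted isomorphism. The explicit descriptions then follow from Examples~\ref{alpha for 1} and~\ref{alpha for n}: since $j_n(1)=\lceil\log_p n\rceil=1$ for $2\leq n<p$, Example~\ref{alpha for 1} gives $\alp_{(x),n}=\Bock_{p,S/S_{(n,p)}}(\eps_{(x),\dbZ/p})$, while Example~\ref{alpha for n} identifies $\alp_{w,n}$ with the $n$-fold Massey product element for $|w|=n$.

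It remains to establish injectivity. I would take the generating set of the domain consisting of the $1_x$ ($x\in X$) together with the images $\bar w$ of all Lyndon words $w$ of length $n$: the former are a basis of the $\bigoplus_x\dbZ/p$-summand, and the latter span $\Sh(X)_{\indec,n}\tensor(\dbZ/p)$ by Proposition~\ref{Lyndon words generate}, so together they span the whole domain. Since the one-letter words are precisely the Lyndon words of length $1$, the epimorphism carries this generating set onto $\{\alp_{(x),n}:x\in X\}\cup\{\alp_{w,n}:w\text{ Lyndon},\ |w|=n\}$, with distinct generators going to distinct elements. Because $J(n)=\{1,n\}$, Theorem~\ref{bases}(b) says this is exactly a linear basis of $H^2(S/S_{(n,p)})$.

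I would then finish by a purely formal spanning-set argument, valid for arbitrary $X$: any linear relation among the chosen generators of the domain maps to a linear relation among the corresponding basis vectors of $H^2(S/S_{(n,p)})$, which forces all coefficients to vanish; hence the generators are linearly independent, so they form a basis of the domain that the epimorphism sends bijectively onto the basis of the target, and the map is an isomorphism. I do not expect a genuine obstacle, as the substance is already contained in Theorems~\ref{surjectivity} and~\ref{bases} and Proposition~\ref{Lyndon words generate}. The one point deserving care is that injectivity is obtained \emph{without} any dimension count, which is precisely what lets the same argument cover infinite $X$; as a by-product it reproves that the Lyndon words of length $n$ form a \emph{basis} of $\Sh(X)_{\indec,n}\tensor(\dbZ/p)$, so that the sharper statement \cite{Efrat20}*{Th.\ 7.3(b)} is not needed.
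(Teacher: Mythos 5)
Your proposal is correct and follows essentially the same route as the paper: reduce to $J(n)=\{1,n\}$ via Remark \ref{J(n) for p large}(1), obtain the epimorphism from Theorem \ref{surjectivity}, identify the images via Examples \ref{alpha for 1} and \ref{alpha for n}, and conclude by observing that a spanning set of the domain (the $1_x$ together with the classes of Lyndon words of length $n$, using Proposition \ref{Lyndon words generate}) is carried onto the Lyndon basis of $H^2(S/S_{(n,p)})$, forcing it to be a basis and the map to be an isomorphism. The dimension-count-free argument and the observed by-product (that the stronger basis statement of \cite{Efrat20}*{Th.\ 7.3(b)} is not needed) are exactly as in the paper.
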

\begin{proof}
By Remark \ref{J(n) for p large}(1), $J(n)=\{1,n\}$.
Therefore Theorem \ref{surjectivity} gives an \textsl{epimorphism} as in (\ref{main isomorphism}).
The generators $1_x$ and the images $\bar w$ of words $w$ of length $n$ are mapped as specified, by Examples \ref{alpha for 1} and \ref{alpha for n}.

The generators $1_x$, $x\in X$, clearly span $\bigoplus_{x\in X}\dbZ/p$, and by Proposition \ref{Lyndon words generate}, the images $\bar w$ of the Lyndon words $w$ in $X^*$ of length $n$ span $\Sh(X)_{\indec,n}\tensor(\dbZ/p)$.
Together they form a spanning set of the left-hand side of (\ref{main isomorphism}), which is mapped to a linear basis of the right-hand side (Corollary \ref{bases when p is large}).
It follows that this spanning set is a linear basis, and the map (\ref{main isomorphism}) is an isomorphism.
\end{proof}

\begin{bibdiv}
\begin{biblist}

\bib{BierHolubowski15}{article}{
author={Bier, Agnieszka},
   author={Ho\l ubowski, Waldemar},
   title={A note on commutators in the group of infinite triangular matrices
   over a ring},
   journal={Linear Multilinear Algebra},
   volume={63},
   date={2015},
   number={11},
   pages={2301\ndash2310},
}

\bib{ChapmanEfrat16}{article}{
author={Chapman, Michael},
author={Efrat, Ido},
title={Filtrations of the free group arising from the lower central series},
journal={J.\ Group Theory},
status={a special issue in memory of O.\ Melnikov},
volume={19},
date={2016},
pages={405\ndash433},
}

\bib{CheboluEfratMinac12}{article}{
   author={Chebolu, Sunil K.},
   author={Efrat, Ido},
   author={Min{\'a}{\v{c}}, J{\'a}n},
   title={Quotients of absolute Galois groups which determine the entire Galois cohomology},
   journal={Math. Ann.},
   volume={352},
   date={2012},
   pages={205--221},
}

\bib{ChenFoxLyndon58}{article}{
   author={Chen, K.-T.},
   author={Fox, R. H.},
   author={Lyndon, R. C.},
   title={Free differential calculus. IV. The quotient groups of the lower  central series},
   journal={Ann. Math.},
   volume={68},
   date={1958},
   pages={81--95},
}

\bib{DixonDuSautoyMannSegal99}{book}{
    author={Dixon, J.D.},
	author={Du Sautoy, M.P.F.},
    author={Mann, A.},
	author={Segal, D.},
    title={Analytic Pro-$p$ Groups},
   date={1999},
   publisher={Cambridge Univ.\ Press},
   place={Cambridge},
   label={DDMS99},
   }

\bib{Dwyer75}{article}{
   author={Dwyer, William G.},
   title={Homology, Massey products and maps between groups},
   journal={J. Pure Appl. Algebra},
   volume={6},
   date={1975},
   pages={177--190},
}

\bib{Efrat14}{article}{
author={Efrat, Ido},
title={The Zassenhaus filtration, Massey products, and representations of profinite groups},
journal={Adv.\ Math.},
volume={263},
date={2014},
pages={389\ndash411},
}

\bib{Efrat17}{article}{
author={Efrat, Ido},
title={The cohomology of canonical quotients of free groups and Lyndon words},
journal={Documenta Math.},
volume={22},
date={2017},
pages={973\ndash997},
}

\bib{Efrat20}{article}{
author={Efrat,Ido},
title={The lower p-central series of a free profinite group and the shuffle algebra},
journal={J.\ Pure Appl.\ Algebra},
volume={224}
date={2020},
}

\bib{EfratMatzri17}{article}{
label={EfMa17},
author={Efrat, Ido},
author={Matzri, Eliyahu},
title={Triple Massey products and absolute Galois groups},
journal={J. Eur. Math Soc.\ (JEMS)},
volume={19},
date={2017},
pages={3629\ndash3640},
}

\bib{EfratMinac11}{article}{
label={EfMi11},
   author={Efrat, Ido},
   author={Min\'a\v c, J\'an},
   title={On the descending central sequence of absolute Galois groups},
   journal={Amer. J. Math.},
   volume={133},
   date={2011},
   pages={1503\ndash1532},
 }

\bib{EfratMinac17}{article}{
label={EfMi17},
   author={Efrat, Ido},
   author={Min\'a\v c, J\'an},
   title={Galois groups and cohomological functors},
   journal={Trans.\ Amer.\ Math.\ Soc.},
   volume={369},
   date={2017},
   pages={2697\ndash2720},
 }

\bib{Ershov12}{article}{
   author={Ershov, Mikhail},
   title={Golod-Shafarevich groups: a survey},
   journal={Internat. J. Algebra Comput.},
   volume={22},
   date={2012},
   pages={1230001, 68},
}

\bib{FriedJarden08}{book}{
   author={Fried, Michael D.},
   author={Jarden, Moshe},
   title={Field Arithmetic},
   edition={Third edition},
   publisher={Springer, Berlin},
   date={2008},
   pages={xxiv+792},
}

\bib{Gartner11}{thesis}{
author={G\"artner, Jochen},
title={Mild pro-$p$-groups with trivial cup-product},
place={Universit\"at Heidelberg},
type={Dissertation},
date={2011},
}

\bib{GuillotMinacTopazWittenberg18}{article}{
author={Guillot, Pierre},
author={Min\'a\v c, J\'an},
author={Topaz,  Adam},
author={Wittenberg, Olivier},
title={Four-fold Massey products in Galois cohomology},
journal={Compositio Math.},
volume={154},
date={2018},
pages={1921\ndash1959},
}

\bib{HarpazWittenberg19}{article}{
label={HaW19},
author={Harpaz, Yonatan},
author={Wittenberg, Olivier},
title={The Massey vanishing conjecture for number fields},
date={2019},
status={preprint},
eprint={arXiv:1904.06512},
}

\bib{HopkinsWickelgren15}{article}{
label={HoW15},
author={Hopkins, M.},
author={Wickelgren, Kirsten},
title={Splitting varieties for triple Massey products},
journal={J. Pure Appl. Algebra},
volume={219},
date={2015},
pages={1304\ndash1319},
}

\bib{Jacobson62}{book}{
author={Jacobson, Nathan},
title={Lie Groups},
publisher={Dover Publications, Inc.},
place={New York},
date={1962},
}

\bib{Koch69}{article}{
author={Koch, Helmut},
   title={Zum Satz von Golod-Schafarewitsch},
   journal={Math. Nachr.},
   volume={42},
   date={1969},
   pages={321\ndash333},
}

\bib{Koch02}{book}{
   author={Koch, Helmut},
   title={Galois theory of $p$-extensions},
   publisher={Springer, Berlin},
   date={2002},
   pages={xiv+190},
}

\bib{Labute66}{article}{
   author={Labute, John P.},
   title={Demu\v{s}kin groups of rank $\aleph _{0}$},
   journal={Bull. Soc. Math. France},
   volume={94},
   date={1966},
   pages={211\ndash244},
}

\bib{Labute67}{article}{
author={Labute, John},
title={Classification of Demu\v skin groups},
journal={Can.\ J.\ Math.},
volume={19},
date={1967},
pages={106\ndash132},
}

\bib{Labute06}{article}{
   author={Labute, John},
   title={Mild pro-$p$ groups and Galois groups of $p$-extensions of $\dbQ$},
   journal={J.\ reine angew. Math.},
   volume={596},
   date={2006},
   pages={155\ndash182},
}

\bib{Lazard65}{article}{
   author={Lazard, M.}
   title={Groupes analytiques $p$-adiques},
   journal={Inst. Hautes \'Etudes Sci., Publ. Math.},
   colume={26},
   date={1965},
   pages={389\ndash603},
}

\bib{MinacRogelstadTan16}{article}{
author={Min\'a\v c, J\'an},
author={Rogelstad, Michael},
author={T\^an, Nguyen Duy},
title={Dimensions of Zassenhaus filtration subquotients of some pro-$p$-groups},
journal={Israel J.\ Math.},
volume={212},
date={2016},
pages={825\ndash855},
}

\bib{MinacTan15}{article}{
author={Min\'a\v c, J\'an},
author={T\^an, Nguyen Duy},
title={The Kernel Unipotent Conjecture and the vanishing of Massey products for odd rigid fields {\rm (with an appendix by Efrat, I., Min\'a\v c, J.\  and T\^an, N. D.)}},
journal={Adv.\ Math.},
volume={273},
date={2015},
pages={242\ndash270},
}

\bib{MinacTan16}{article}{
author={Min\'a\v c, J\'an},
author={T\^an, Nguyen Duy},
title={Triple Massey products vanish over all fields},
journal={J.\ London Math.\ Soc.},
volume={94},
date={2016},
pages={909\ndash932},
}

\bib{Morishita04}{article}{
   author={Morishita, Masanori},
   title={Milnor invariants and Massey products for prime numbers},
   journal={Compos. Math.},
   volume={140},
   date={2004},
   pages={69--83},
}

\bib{Morishita12}{book}{
   author={Morishita, Masanori},
   title={Knots and Primes},
   series={Universitext},
   publisher={Springer, London},
   date={2012},
   pages={xii+191},
}

\bib{NeukirchSchmidtWingberg}{book}{
author={Neukirch, J.},
author={Schmidt, Alexander},
author={Wingberg, Kay},
title={Cohomology of Number Fields},
edition={Second edition},
publisher={Springer},
date={2008},
}

\bib{Radford79}{article}{
author={Radford, David E.},
title={A natural ring basis for the shuffle algebra and an application to group schemes},
journal={J.\  Algebra},
volume={58},
date={1979},
pages={432\ndash454},
}

\bib{Reutenauer93}{book}{
   author={Reutenauer, Christophe},
   title={Free Lie algebras},
   series={London Mathematical Society Monographs. New Series},
   volume={7},
   note={Oxford Science Publications},
   publisher={The Clarendon Press, Oxford University Press, New York},
   date={1993},
   pages={xviii+269},
}

\bib{SerreDemuskin}{article}{
   author={Serre, Jean-Pierre},
   title={Structure de certains pro-$p$-groupes (d'apr\`es Demu\v skin)},
   conference={
      title={S\'eminaire Bourbaki (1962/63), Exp.\ 252},
   },
   label={Ser63},
}

\bib{SerreLie}{book}{
    author={Serre, Jean-Pierre},
     title={Lie Algebras and Lie Groups},
      publisher={Springer}
      date={1992},
}

\bib{Shalev90}{article}{
author={Shalev, Aner},
title={Dimension subgroups, nilpotency indices, and the number of generators of ideals in $p$-algebras},
journal={J.\ Algebra},
volume={129},
date={1990},
pages={412\ndash438},
}

\bib{VogelThesis}{thesis}{
author={Vogel, Denis},
title={Massey products in the Galois cohomology of number fields},
type={Ph.D.\ thesis},
place={Universit\"at Heidelberg},
date={2004},
}

\bib{Vogel05}{article}{
   author={Vogel, Denis},
   title={On the Galois group of 2-extensions with restricted ramification},
   journal={J. reine angew. Math.},
   volume={581},
   date={2005},
   pages={117--150},
}

\bib{Wickelgren12}{article}{
   author={Wickelgren, Kirsten},
   title={$n$-nilpotent obstructions to $\pi_1$ sections of $\Bbb P^1-\{0,1,\infty\}$ and Massey products},
   conference={
      title={Galois-Teichm\"uller theory and arithmetic geometry},
   },
   book={
      series={Adv. Stud. Pure Math.},
      volume={63},
      publisher={Math. Soc. Japan, Tokyo},
   },
   date={2012},
   pages={579--600},
  }

\bib{Zassenhaus39}{article}{
 author={Zassenhaus, Hans},
   title={Ein Verfahren, jeder endlichen $p$-Gruppe einen Lie-Ring mit der Charakteristik $p$ zuzuordnen},
   journal={Abh. Math. Sem. Univ. Hamburg},
   volume={13},
   date={1939},
   pages={200\ndash207},
}

\bib{Zelmanov00}{article}{
   author={Zelmanov, E.},
   title={On groups satisfying the Golod-Shafarevich condition},
   conference={
      title={New horizons in pro-$p$ groups},
   },
   book={
      series={Progr. Math.},
      volume={184},
      publisher={Birkh\"{a}user Boston, Boston, MA},
   },
   date={2000},
   pages={223\ndash232},
}

\end{biblist}
\end{bibdiv}
\end{document}